\numberwithin{equation}{section}
\newtheorem{mainthm}{Theorem}
\newtheorem{thm}{Theorem}[section]
\newtheorem{cor}[thm]{Corollary}
\newtheorem{lem}[thm]{Lemma}
\newtheorem{prop}[thm]{Proposition}
\theoremstyle{definition} 
\newtheorem{rem}[thm]{Remark}
\newtheorem{defn}[thm]{Definition}
\theoremstyle{remark}
\def\bN {\mathbb{N}}
\def\bR {\mathbb{R}}
\def\bS {\mathbb{S}}
\def\bZ {\mathbb{Z}}
\def\cE {\mathcal{E}}
\newcommand{\La}{\big\langle}
\newcommand{\Ra}{\big\rangle}
\newcommand{\bs}[1]{\boldsymbol{#1}}
\newcommand{\vd}{\mathrm{d}}
\newcommand{\lin}{_{\textsc{l}}}
\definecolor{deepgreen}{cmyk}{1,0,1,0.5}
\newcommand{\E}{\mathcal{E}}
\newcommand{\LL}{\mathcal{L}}
\newcommand{\N}{\mathbb{N}}
\newcommand{\R}{\mathbb{R}}
\newcommand{\Sp}{\mathbb{S}}
\newcommand{\Z}{\mathbb{Z}}
\newcommand{\al}{\alpha}
\newcommand{\de}{\delta}
\newcommand{\lam}{\lambda}
\newcommand{\te}{\theta}
\newcommand{\s}{\sigma}
\newcommand{\De}{\Delta}
\newcommand{\Lam}{\Lambda}
\newcommand{\p}{\partial}
\newcommand{\na}{\nabla}
\newcommand{\Rmnum}[1]{\expandafter\@slowromancap\romannumeral #1@}
\newcommand{\ti}{\widetilde}
\newcommand{\U}{\underline}
\newcommand{\ang}[1]{\left\langle{#1}\right\rangle}
\newcommand{\abs}[1]{\left\lvert{#1}\right\rvert}
\newcommand{\EQ}[1]{\begin{equation}\begin{split} #1 \end{split}\end{equation}}
\newcommand{\Del}[1]{}
\newcommand{\mand}{{\ \ \text{and} \ \  }}
\newcommand{\mif}{{\ \ \text{if} \ \ }}
\newcommand{\mfor}{{\ \ \text{for} \ \ }}
\newcommand{\mas}{{\ \ \text{as} \ \ }}
\newcommand{\uD}{\operatorname{D}}
\definecolor{green}{rgb}{0,0.8,0} 
\newcommand{\ud}{\mathrm{d}}
\newcommand{\eps}{\epsilon}
\newcommand{\bfd}{{\bf d}}
\newcommand{\bfe}{{\bf e}}
\newcommand{\bfi}{{\bf i}}
\newcommand{\bfp}{{\bf p}}
\newcommand{\bfq}{{\bf q}}
\newcommand{\calA}{\mathcal A}
\newcommand{\calC}{\mathcal C}
\newcommand{\calE}{\mathcal E}
\newcommand{\calI}{\mathcal I}
\newcommand{\calJ}{\mathcal J}
\newcommand{\calL}{\mathcal L}
\newcommand{\calM}{\mathcal M}
\newcommand{\calQ}{\mathcal Q}
\newcommand{\calT}{\mathcal T}
\newcommand{\calZ}{\mathcal Z}
\newcommand{\ULam}{\U{\Lam}}
\begin{document}

\title[Bubbling for the harmonic map heat flow]{Bubble decomposition for the harmonic map heat flow \\in the equivariant case}
\author{Jacek Jendrej}
\author{Andrew Lawrie}

\begin{abstract}
We consider the harmonic map heat flow for maps $\bR^{2} \to \bS^2$, under equivariant symmetry.  It is known that solutions to the initial value problem can exhibit bubbling along a sequence of times -- the solution decouples into a superposition of harmonic maps concentrating at different scales and a body map that accounts for the rest of the energy.  We prove that this bubble decomposition is unique and occurs continuously in time. The main new ingredient in the proof is the notion of a collision interval from~\cite{JL6}.  
\end{abstract}

\keywords{bubbling; harmonic map; energy-critical}
\subjclass[2010]{35L71 (primary), 35B40, 37K40}

\thanks{J.Jendrej is supported by  ANR-18-CE40-0028 project ESSED.  A. Lawrie is supported by NSF grant DMS-1954455, a Sloan Research Fellowship, and the Solomon Buchsbaum Research Fund. 
}

\maketitle

\tableofcontents

\section{Introduction}

\subsection{Setting of the problem}

Consider the harmonic map heat flow (HMHF) for maps $\Psi: \R^2 \to \Sp^2 \subset \R^3$, that is,  the heat flow associated to the Dirichlet energy 
\EQ{
E(\Psi) := \frac{1}{2} \int_{\R^2} \abs{\na \Psi(x)}^2 \, \ud x.
}
The initial value problem for the HMHF is given by
\EQ{ \label{eq:hmhf} 
\p_t \Psi - \De \Psi  &= \Psi \abs{\na \Psi}^2  \\
\Psi(0, x) &= \Psi_0(x). 
}
 We say a solution to~\eqref{eq:hmhf} is  $k$-equivariant if it takes the form 
\EQ{
\Psi(t, re^{i\te}) = (\sin u(t, r) \cos k \te,  \sin  u(t, r) \sin k \te,  \cos u(t, r)) \in \Sp^2 \subset \R^3,
}
where $k \in \N$ and $(r, \te)$ are polar coordinates on $\R^2$. In this case the HMHF reduces to a scalar equation for the polar angle $u= u(t,r)$, 
\EQ{ \label{eq:hf} 
\p_t u &= \p_{r}^2 u + \frac{1}{r} \p_r u - \frac{k^2}{r^2} \frac{\sin 2u}{2 }, \\
u(0) &= u_0,
}
and the energy $E  = E(u)$ reduces to \EQ{
&E( u(t) )  =   2 \pi   \int_0^\infty \frac{1}{2}\left((\p_r u(t, r))^2 + k^2 \frac{\sin^2 (u(t, r))}{r^2} \right)\, r  \, \ud r ,
}
and formally satisfies 
\EQ{
& \frac{\ud}{\ud t} E( u(t))  = - 2 \pi \int_0^\infty (\p_t u(t, r))^2 \, r \, \ud r  = - 2 \pi  \| \calT(u(t)) \|_{L^2}^2 ,
}
where in the $k$-equivariant setting $\calT(u) :=  \p_{r}^2 u + \frac{1}{r} \p_r u - \frac{k^2}{2r^2} \sin(2u)$ is called the tension of $u$. Integrating in time from $t_0$ to $t$ gives, 
\EQ{ \label{eq:energy-identity-intro} 
E( u(t)) + 2 \pi \int_{t_0}^t  \| \calT(u(s)) \|_{L^2}^2 \, \ud s =  E( u(t_0)) .
}

The natural setting in which to consider the initial value problem for~\eqref{eq:hf} is the space of initial data $u_0$ with finite energy, $E(u) < \infty$. This set is split into disjoint sectors, $\E_{\ell, m}$, which for $\ell, m\in \bZ$, are defined by 
\EQ{
\calE_{\ell, m}:= \big\{ u \mid E(u) < \infty, \quad \lim_{r \to 0} u(r) = \ell\pi, \quad \lim_{r \to \infty} u(r) = m \pi \big\}.
}
These sectors, which are preserved by the flow, are related to the topological degree of the full map $\Psi: \bR^2 \to  \bS^2$:
 if $m - \ell$ is even and $u \in \E_{\ell, m}$, then the corresponding map $\Psi$ with polar angle $u$ is topologically trivial,
 whereas for odd $m - \ell$ the map has degree $k$.

The sets $\calE_{\ell, m}$ are affine spaces, parallel to the linear space $\E := \E_{0, 0}$, which we endow with the norm, 
\EQ{
\|  u_0 \|_{\E}^2:= \int_0^\infty \Big( ( \p_r u_0(r))^2 + k^2 \frac{ (u_0(r))^2}{r^2} \Big) \, r \ud r.
} 
We make note of the embedding $\|u_0 \|_{L^\infty} \le C \| u_0 \|_{\E}$. 

The unique $k$-equivariant harmonic map is given  explicitly by
\EQ{
Q(r) := 2 \arctan (r^k).
}
Here uniqueness means up to scaling,  sign change, and adding a multiple of $\pi$, i.e.,   every finite energy stationary solution to~\eqref{eq:hf}  takes the form $Q_{\mu, \sigma, m}(r) =  m \pi + \sigma Q(r/ \mu)$ for some  $\mu \in (0, \infty), \sigma \in \{0, -1, 1\}$ and $m \in \Z$. 
The map $ Q$ and its rescaled versions $ Q_\lambda(r) := Q(\lambda^{-1}r)$ for $\lambda > 0$,
are minimizers of the energy $E$  within the class $\E_{0, 1}$;  in fact, $E(  Q_\lambda ) = 4  \pi k$.

\subsection{Statement of the results}

We prove the following theorem. 

\begin{mainthm}[Bubble decomposition]\label{thm:main} 
Let $k \in \N$, let $\ell, m \in \Z$,  and let $u(t)$ be the solution to~\eqref{eq:hf} with initial data $u(0) = u_0 \in \E_{\ell, m}$,  defined on its maximal  interval of existence $[0,T_+)$. 
 
 \emph{({Global solution})} If $T_+ = \infty$, there exist a time $T_0>0$,  an integer $N \ge 0$, continuous functions $\lam_1(t), \dots,  \lam_N(t) \in C^0([T_0, \infty))$, signs $\iota_1, \dots, \iota_N \in \{-1, 1\}$,  and $ g(t) \in \E$ defined by 
 \EQ{ \label{eq:sr-global} 
  u(t) = m  \pi + \sum_{j =1}^N \iota_j ( Q_{\lam_j(t)} -  \pi) +  g(t) , 
 }
 such that 
 \EQ{
 \|  g(t)\|_{\E} + \sum_{j =1}^{N} \frac{\lam_{j}(t)}{\lam_{j+1}(t)}  \to 0 \mas t \to \infty,  
 }
 where above we use the convention that $\lam_{N+1}(t) = \sqrt{t}$.

 \emph{({Blow-up solution})} If $T_+ < \infty$, there exist a time $T_0< T_+$,   integers $m_{\infty}, m_\De$, a mapping $u^*\in \E_{0, m_{\infty}}$, an integer $N \ge 1$, continuous functions $\lam_1(t), \dots,  \lam_N(t) \in C^0([T_0, T_+))$, signs $\iota_1, \dots, \iota_N \in \{-1, 1\}$, and $g(t) \in \E$ defined by 
 \EQ{ \label{eq:sr-bu} 
 u(t) = m_\De  \pi + \sum_{j =1}^N \iota_j( Q_{\lam_j(t)} - \pi) +  u^* +  g(t) , 
 }
 such that 
 \EQ{
 \|  g(t)\|_{\E} + \sum_{j =1}^{N} \frac{\lam_{j}(t)}{\lam_{j+1}(t)}  \to 0 \mas t \to T_+, 
 }
 where above we use the convention that $\lam_{N+1}(t) = \sqrt{T_+-t}$. 
\end{mainthm}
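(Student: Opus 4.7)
The plan is to combine a classical sequential bubble decomposition, coming from energy dissipation, with the \emph{collision interval} mechanism of~\cite{JL6} to upgrade convergence-along-a-sequence to convergence-along-every-sequence, and then to read off the continuous parameters $\lambda_j(t)$ by modulation.

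\textbf{Step 1 (Sequential decomposition).} The energy identity~\eqref{eq:energy-identity-intro} yields $\int_{T_0}^{T_+}\|\calT(u(s))\|_{L^2}^2\,\mathrm{d}s<\infty$, so one can extract a sequence $t_n\to T_+$ with $\calT(u(t_n))\to 0$ in $L^2$. Applying an equivariant concentration--compactness / profile decomposition to $\{u(t_n)\}$, in the spirit of Struwe and Qing--Tian, together with the fact that every finite-energy equivariant harmonic map is of the form $\sigma Q_\mu+m\pi$, produces integers $N,m_\Delta$, signs $\iota_j\in\{-1,1\}$, scales $\lambda_1^{(n)}\ll\cdots\ll\lambda_N^{(n)}$ and, in the blow-up case, a body map $u^\ast\in\calE_{0,m_\infty}$ (obtained as a local limit of $u(t)$ away from the blow-up point by interior parabolic regularity) such that
\begin{equation*}
u(t_n)=m_\Delta\pi+\sum_{j=1}^N\iota_j\!\left(Q_{\lambda_j^{(n)}}-\pi\right)+u^\ast+g_n,\qquad \|g_n\|_\calE\to 0.
\end{equation*}
In the global case, the natural parabolic scale $\sqrt{t}\to\infty$ eliminates the body map.

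\textbf{Step 2 (Continuity in time; the main new step).} Define the multibubble distance
\begin{equation*}
d(t):=\inf_{\vec\lambda}\Bigl\{\bigl\|u(t)-m_\Delta\pi-\textstyle\sum_j\iota_j(Q_{\lambda_j}-\pi)-u^\ast\bigr\|_\calE+\sum_{j=1}^N\tfrac{\lambda_j}{\lambda_{j+1}}\Bigr\},
\end{equation*}
with $\lambda_{N+1}=\sqrt{|T_+-t|}$ (or $\sqrt{t}$ in the global case). Step~1 says $d(t_n)\to 0$. Suppose for contradiction that $\limsup_{t\to T_+}d(t)>0$. Then, by continuity of the flow in $\calE$, one can isolate a \emph{collision interval} $[a_n,b_n]\subset[T_0,T_+)$ on which $d(a_n),d(b_n)\to 0$ while $d\gtrsim\eta$ in between. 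On such an interval, running a standard modulation analysis produces continuous scales $\lambda_j(t)$ and a small error $g(t)$ obeying good ODE/PDE bounds; a localized virial/ejection estimate --- the parabolic analogue of the argument in~\cite{JL6} --- then forces the ratios $\lambda_j/\lambda_{j+1}$ to evolve monotonically inside $[a_n,b_n]$, contradicting the near-equality of the configurations at $a_n$ and $b_n$. Hence $d(t)\to 0$.

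\textbf{Step 3 (Extracting continuous parameters; obstacle).} Once $d(t)\to 0$, one obtains global continuous scales $\lambda_1(t)<\cdots<\lambda_N(t)$ and the error $g(t)$ by imposing modulation orthogonalities against the scaling resonances $\Lambda Q_{\lambda_j}$, with continuity of each $\lambda_j$ following from the implicit function theorem. The main obstacle is unquestionably Step~2. In the dispersive setting of~\cite{JL6}, the collision interval argument relied on conservation of energy and finite propagation speed, neither of which survives in the heat flow. The substitute must exploit the integral bound $\int\|\calT(u)\|_{L^2}^2<\infty$, a localization at the smallest bubble scale $\lambda_1$, and parabolic regularity, while coping with the fact that parabolic propagation is instantaneous. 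Isolating a monotone quantity that detects bubble interaction under these constraints --- and in particular ruling out transient merging, annihilation, or exchange of bubbles within a collision interval --- is the technically most delicate point of the proof.
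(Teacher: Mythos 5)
Your Steps 1 and 3 follow the same skeleton as the paper (sequential bubbling from the dissipation bound, modulation to extract continuous scales), but Step 2 --- which you yourself flag as the main obstacle --- is precisely where the proposal has a genuine gap, and the mechanism you gesture at is not the one that closes it. First, the contradiction you sketch (``the ratios $\lambda_j/\lambda_{j+1}$ evolve monotonically, contradicting the near-equality of the configurations at $a_n$ and $b_n$'') misreads the structure of a collision interval: by construction $\bfd$ is small at one endpoint and of size $\eta$ at the other, so there is no near-equality of endpoint configurations to contradict, and no monotonicity of the scale ratios is proved or used anywhere. Second, the paper's actual mechanism is dissipative rather than virial, and it hinges on an ingredient absent from your outline: the \emph{minimality} of the number $K$ of interior (colliding) bubbles, with the remaining $N-K$ exterior bubbles controlled uniformly outside a curve $\rho_K(t)$ throughout the interval. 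Minimality of $K$ forces collisions to last long, $d-c\gtrsim \lambda_K(c)^2$ (Lemma~\ref{lem:collision-duration}): on shorter time scales the modulation bound $|\lambda_j'|\lesssim \bfd/\lambda_j$ freezes the scales $\lambda_j$, $j\ge K$, and one manufactures collision intervals with only $K-1$ interior bubbles. On subintervals $[c_n,d_n]$ of length $\tfrac1n\lambda_K(c_n)^2$ where $\bfd(t)\ge\eps$, the compactness Lemma~\ref{lem:compact} applied at scale $\lambda_K$, combined with Lemma~\ref{lem:delta-to-d}, yields the pointwise lower bound $\lambda_K(t)^2\|\p_t u(t)\|_{L^2}^2\ge c_0$; summing over $n$ contradicts $\int_0^{T_*}\|\p_t u\|_{L^2}^2\,\ud t<\infty$. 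Nothing rules out ``transient merging, annihilation, or exchange'' by a monotone quantity; such events are simply irrelevant to an argument that only counts dissipation. Without either this argument or a worked-out substitute for your proposed parabolic virial/ejection estimate (whose proof you do not indicate), the passage from sequential to continuous convergence --- i.e.\ the theorem itself --- is not established. The Jia--Kenig virial-type functional does appear in the paper, but only inside the proof of the compactness lemma, to kill energy on neck regions of a fixed-time sequence, not as a monotonicity along the flow.

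A smaller but real slip in Step 1: in the blow-up case the bound $\int_0^{T_+}\|\calT(u(t))\|_{L^2}^2\,\ud t<\infty$ on a \emph{finite} interval does not produce $t_n\to T_+$ with $\|\calT(u(t_n))\|_{L^2}\to 0$. What one can and should extract (and what Lemma~\ref{lem:compact} is designed to accept) is the scale-weighted smallness $(T_+-t_n)^{1/2}\|\calT(u(t_n))\|_{L^2}\to 0$, resp.\ $t_n^{1/2}\|\calT(u(t_n))\|_{L^2}\to 0$ in the global case; the compactness statement then only gives bubbling up to the self-similar scale, and the region $r\gtrsim\sqrt{T_+-t}$ (or $r\gtrsim\sqrt t$) has to be handled separately by the localized energy inequality, as in Propositions~\ref{prop:seq-ftbu} and~\ref{prop:seq-global}.
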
 

\begin{rem} 
Asymptotic decompositions of solutions to~\eqref{eq:hf} (in fact for solutions to the equation~\eqref{eq:hmhf} without symmetry assumptions) were proved along a \emph{sequence of times} $t_n \to T_+$,  in a series of works by Struwe~\cite{Struwe85},  Qing~\cite{Qing}, Ding-Tian~\cite{DT}, Wang~\cite{Wang}, Qing-Tian~\cite{QT}, and Topping~\cite{Topping-winding}.  The main contribution of this paper is to show that the decomposition can be taken continuously in time for $k$-equivariant solutions. 



\end{rem} 

\begin{rem}
 \label{Topping} 
In the non-equivariant setting, i.e., for~\eqref{eq:hmhf},  Topping~\cite{Top-JDG, Topping04} made important progress on a related question in the global case, showing the uniqueness of the locations of the bubbling points under restrictions on the configurations of bubbles appearing in the sequential decomposition. His assumption, roughly, is that all of the bubbles concentrating at a certain point have to have the same orientation. We can contrast this assumption with the equivariant setting, where in the decomposition~\eqref{eq:sr-global} subsequent bubbles have opposite orientations. 

\end{rem} 

\begin{rem} 
Given Theorem~\ref{thm:main}, it is natural to ask which configurations of bubbles are possible in the decomposition.  Van der Hout~\cite{vdHout03} showed that there can only be one bubble in the decomposition in the case of equivariant  finite time blow-up; see also~\cite{BvdHH}. In contrast, in the infinite time case, it is expected that there can be equivariant bubble trees of arbitrary size (see recent work of Del Pino, Musso, and Wei~\cite{DPMW} for a construction in the case of the critical semi-linear heat equation). 
\end{rem} 

\begin{rem} 
There are solutions to the HMHF that develop a bubbling singularity in finite time, the first being the examples of Coron and Ghidaglia~\cite{CG} (in dimension $d \ge 3$) and Chang, Ding, Ye~\cite{CDY} in the $2d$ case considered here.  Guan, Gustafson, and Tsai~\cite{GGT} and Gustafson, Nakanishi, and  Tsai~\cite{GNT} showed that the harmonic maps $Q$ are asympotically stable in equivariance classes $k \ge 3$, and thus there is no finite time blow up for energies close to $Q$ in that setting. For $k=2$,~\cite{GNT} gave examples of solutions exhibiting infnite time blow up and eternal oscillations. Rapha\"el and Schweyer constructed a stable blow-up regime for $k=1$ in~\cite{RSc-13} and then blow up solutions with different rates in~\cite{RSc-14}. 
Recently,  Davila, Del Pino, and Wei~\cite{DDPW} constructed examples of solutions simultaneously concentrating a single copy of the ground state harmonic map at distinct points in space. 
\end{rem}

\subsection{Summary of the proof}

We give an informal description of the proof of Theorem~\ref{thm:main} starting with a summary of the sequential bubbling results as in, e.g.,~\cite{Qing, Topping-winding}, adapted to our setting. 
A crucial ingredient is a sequential compactness lemma, which says that a sequence of maps with vanishing tension must converge (at least locally in space) to a multi-bubble, which we define as follows.  

\begin{defn}[Multi-bubble configuration] \label{def:multi-bubble} 
Given $M \in \{0, 1, \ldots\}$, $m \in \bZ$, $\vec\iota = (\iota_1, \ldots, \iota_M) \in \{-1, 1\}^M$
and an increasing sequence $\vec\lambda = (\lambda_1, \ldots, \lambda_M) \in (0, \infty)^M$,
a \emph{multi-bubble configuration} is defined by the formula
\begin{equation}
\calQ(m, \vec\iota, \vec\lambda; r) := m\pi + \sum_{j=1}^M\iota_j\big( Q_{\lambda_j}(r) - \pi\big).
\end{equation}
\end{defn}
\begin{rem}
If $M = 0$, it should be understood that $\calQ(m, \vec\iota, \vec\lambda; r) = m\pi$
for all $r \in (0, \infty)$, where $\vec\iota$ and $\vec\lambda$ are $0$-element sequences,
that is the unique functions $\emptyset \to \{-1, 1\}$ and $\emptyset \to (0, \infty)$, respectively. 
\end{rem}

With this definition, we  define a localized distance  function to multi-bubble configurations by 
\EQ{ \label{eq:delta-def-intro} 
\bs\delta_{R}(u) := \inf_{m, M, \vec \iota, \vec \lam} \Big( \| u - \calQ(m, \vec \iota, \vec \lam) \|_{\E(r \le R)}^2 + \sum_{j =1}^M \Big( \frac{\lam_j}{\lam_{j+1}} \Big)^k \Big)^{\frac{1}{2}}
}
where the infimum is taken over all $m \in \Z$, $M \in \{0, 1, 2, \dots\}$, all vectors $\iota \in\{-1, 1\}^M$, $\vec \lam \in (0, \infty)^M$, and  we use the convention that the last scale $\lam_{M+1} = R$. 

The localized sequential compactness lemma (see Lemma~\ref{lem:compact}) says the following: given a sequence of maps $u_n$  with bounded energy,  a sequence $\rho_n \in (0, \infty)$ of scales,  and tension vanishing in $L^2$ relative to the scale $\rho_n$, i.e., $ \lim_{n \to \infty}  \rho_n \| \calT(u_n) \|_{L^2} = 0$, there exists  a subsequence of the $u_n$ that converges to  a multi-bubble configuration up to large scales relative to $\rho_n$, i.e., $ \lim_{n \to \infty} \bs \de_{R_n \rho_n} ( u_n)  = 0$ for some sequence $R_n \to \infty$. An analogous result with no symmetry assumptions was proved by  Qing~\cite{Qing} using the local bubbling theory of Struwe~\cite{Struwe85} together with a delicate elliptic analysis showing that no energy can accumulate on the ``neck'' regions between the bubbles. Here we give a mostly self-contained proof of this compactness result in the simpler equivariant setting using the theory of profile decompositions of G\'erard~\cite{Gerard} and an approach in the spirit of Duyckaerts, Kenig, and Merle's work on nonlinear waves~\cite{DKM3}. To control the energy on the neck regions we use a virial-type functional adapted from Jia and Kenig's proof of sequential soliton resolution for equivariant wave maps~\cite{JK}.

With the compactness lemma in place, we now consider the heat flow. 
To fix ideas, let $u(t)$ be a solution to~\eqref{eq:hf} defined globally in time, i.e., $T_+ = \infty$. By the energy identity~\eqref{eq:energy-identity-intro}, 
\EQ{ \label{eq:bt} 
\int_0^\infty \| \calT(u(t)  \|_{L^2}^2 \, \ud t < \infty, 
}
and thus we can find a sequence of times $t_n \to \infty$ so that $ \lim_{n \to \infty}\sqrt{ t_n} \| \calT(u(t_n)) \|_{L^2}  = 0$. From the compactness lemma we deduce that after passing to a subsequence of the $t_n$, $u(t_n)$ converges to an $N$-bubble configuration up to the self-similar scale $r = \sqrt{t_n}$. In the exterior region $r \gtrsim \sqrt{t}$, we prove that $u(t)$ has vanishing energy (continuously in time) using a localized energy inequality due to Struwe~\cite{Struwe85}; see Proposition~\ref{prop:seq-global}. 

Let $\bfd(t)$ denote the distance to the  particular $N$-bubble configuration obtained via the compactness lemma  (which is defined analogously to~\eqref{eq:delta-def-intro},  except without the spatial localization; see Definition~\ref{def:proximity}). We have so far proved that 
\EQ{
\lim_{n \to \infty} \bfd(t_n)  = 0. 
}
Theorem~\ref{thm:main} follows from showing that in fact $\lim_{t \to \infty} \bfd(t)  = 0$. We assume that continuous-in-time convergence of $\bfd(t)$ fails. To reach a contradiction we study time intervals on which bubbles come into collision (i.e., where $\bfd(t)$ grows), adapting the notion of a \emph{collision interval} from our paper~\cite{JL6}. 

We say that an interval $[a, b]$ is a collision interval with parameters $0<\eps< \eta$ and $N-K$ exterior bubbles for some $1 \le K \le N$, if $\bfd(a) \le \eps$,  $\bfd(b) \ge \eta$, and there exists a curve $r = \rho_K(t)$ outside of which $ u(t)$ is within $\eps$ of an $N-K$-bubble (in the sense of a localized version of $\bfd(t)$); see Defintion~\ref{def:collision}.  We now define $K$ to be the \emph{smallest} non-negative integer for which there exists $\eta>0$, a sequence $\eps_n \to 0$,  and sequences $a_n, b_n \to \infty$, so that $[a_n, b_n]$ are collision intervals with parameters $\eps_n, \eta$ and $N-K$ exterior bubbles, and we write $[a_n, b_n] \in \calC_K( \eps_n, \eta)$; see Section~\ref{ssec:collision} for the proof that $K$ is well-defined and $\ge 1$, under the contradiction hypothesis.  


Consider a sequence of collision intervals $[a_n, b_n] \in \calC_K( \eps_n, \eta)$. Near the endpoint $a_n$,  $u(t)$ is close to an $N$-bubble configuration and we denote the interior scales, which will come into collision, by $\vec \lam = ( \lam_1, \dots, \lam_K)$ and the exterior scales, which stay coherent, by $\vec \mu = ( \vec \mu_{K+1}, \dots, \vec \mu_N)$. The crucial point is that the minimality of $K$ allows us to relate the scale of the $K$th bubble $\lam_K$ to the lengths of the collision intervals $b_n - a_n$. We prove, roughly,  that for sufficiently large $n$ the collision intervals $[a_n, b_n]$  contain subintervals $[c_n, d_n]$ on which (1) $\inf_{t \in [c_n, d_n]}\bfd(t)  \ge \alpha$ for some $\alpha>0$, (2) the scale $\lam_K(t)$ stays roughly constant on $[c_n, d_n]$, and (3) the lower bound $d_n - c_n \gtrsim n^{-1} \lam_K(c_n)^2$ holds. The compactness lemma and the lower bound $\bfd(t) \ge \alpha$ together yield a lower bound on the tension $\inf_{t \in [c_n, d_n]}\lam_K(c_n)^2\| \calT( u(t)  \|_{L^2}^2 \gtrsim 1$ where the scale $\lam_K$ appears again due to the definition of $K$. The last two sentences lead to an immediate contradiction from the boundedness of the integral~\eqref{eq:bt}, i.e.,  
\EQ{
C \ge \int_0^\infty \| \calT(u(t)  \|_{L^2}^2 \, \ud t  \ge \sum_n \int_{c_n}^{d_n} \| \calT(u(t)  \|_{L^2}^2 \, \ud t  \gtrsim \sum_n  n^{-1}, 
}
which proves that $\lim_{t \to \infty} \bfd(t)= 0$.

\subsection{Notational conventions}
The energy is denoted $E$, $\cE$ is the energy space, $\cE_{\ell, m}$ are the finite energy sectors. We use the notation $\cE(r_1, r_2)$ to denote the local energy norm
\begin{equation}
\| g\|_{\cE(r_1, r_2)}^2 := \int_{r_1}^{r_2} \Big( (\partial_r g)^2 + \frac{k^2}{r^2}g^2\Big)\,r\vd r,
\end{equation}
By convention, $\cE(r_0) := \cE(r_0, \infty)$ for $r_0 > 0$. The local nonlinear energy is denoted $E(\bs u_0; r_1, r_2)$. 
We adopt similar conventions as for $\cE$ regarding the omission
of $r_2$, or both $r_1$ and $r_2$.

Given a function $\phi(r)$ and $\lambda>0$, we denote by $\phi_{\lam}(r) = \phi(r/ \lam)$, the $\E$-invariant re-scaling, and by $\phi_{\U{\lam}}(r) = \lam^{-1} \phi(r/ \lambda)$ the $L^2$-invariant re-scaling. We denote by $\Lam :=r \p_r$ and $\ULam := r \partial r +1$ the infinitesimal generators of these scalings. We denote $\ang{\cdot\mid\cdot}$
the radial $L^2(\bR^2)$ inner product given by,  
\EQ{
\La \phi \mid \psi \Ra := \int_0^\infty \phi(r) \psi(r) \,r \, \ud r. 
}
We denote $k$ the equivariance degree and $f(u) := \frac{1}{2} \sin 2u$ the nonlinearity in \eqref{eq:hf}.
We let $\chi$ be a smooth cut-off function, supported in $r \leq 2$ and equal $1$ for $r \le 1$.

We call a ``constant'' a number which depends only on the equivariance degree $k$ and the number of bubbles $N$.
Constants are denoted $C, C_0, C_1, c, c_0, c_1$. We write $A \lesssim B$ if $A \leq CB$ and $A \gtrsim B$ if $A \geq cB$.
We write $A \ll B$ if $\lim_{n\to \infty} A / B = 0$.

For any sets $X, Y, Z$ we identify $Z^{X\times Y}$ with $(Z^Y)^X$, which means that
if $\phi: X\times Y \to Z$ is a function, then for any $x \in X$ we can view $\phi(x)$ as a function $Y \to Z$
given by $(\phi(x))(y) := \phi(x, y)$.

\section{Preliminaries}

\subsection{Well-posedness} 

The starting point for our analysis is the following result of Struwe~\cite{Struwe85}, which says that the initial value problem for the harmonic map flow is well-posed for data in the energy space.

\begin{lem}[Local well-posedness]\cite[Theorem 4.1]{Struwe85} For each $\ell, m \in \Z$ and  $u_0\in \E_{\ell, m}$ there exists a maximal time of existence $T_+= T_+(u_0)$ and a unique solution $u(t) \in \E_{\ell, m}$  to~\eqref{eq:hf} on the time interval $t \in [0, T_+)$ with $u(0)= u_0$. The maximal time is characterized by the following condition: if $T_+<\infty$, there exists $\eps_0>0$ such that 
\EQ{\label{eq:bu-crit} 
\limsup_{ t \to T_+} E(u(t); 0, r_0) \ge \eps_0,
}
for all $r_0>0$. If there is no such $T_+<\infty$, we say $T_+ = \infty$ and the flow is globally defined.

The energy $E(u(t))$ is absolutely continuous and non-increasing as a function of $t \in [0, T]$ for any $T < T_+$,  and for any $t_1 \le t_2 \in [0, T_+)$,  there holds, 
\EQ{ \label{eq:energy-identity} 
E(u(t_2)) + 2\pi \int_{t_1}^{t_2} \int_0^\infty ( \p_t u(t, r))^2 \,r \, \ud r \ud t = E(u(t_1)). 
}
In particular, 
\EQ{ \label{eq:tension-L2} 
\int_0^{T_+} \int_0^\infty ( \p_t u(t, r))^2 \,r \, \ud r \ud t \le E(u_0). 
}
\end{lem}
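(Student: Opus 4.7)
The plan is to treat~\eqref{eq:hf} as a semilinear perturbation of a linear radial heat equation, construct short-time solutions by a contraction mapping, propagate them using the energy identity~\eqref{eq:energy-identity}, and derive the blow-up criterion~\eqref{eq:bu-crit} via an $\eps$-regularity assertion in the spirit of~\cite{Struwe85}.

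First, I would reduce to zero boundary conditions at $r = 0$ and $r = \infty$. Fix a smooth reference profile $\psi_{\ell, m} \in \E_{\ell, m}$ (for example $\psi_{\ell, m}(r) = \ell\pi + (m - \ell)\pi\, \chi(r)$ with $\chi$ a cutoff transitioning smoothly from $0$ to $1$ on a bounded interval), and write $u = \psi_{\ell, m} + v$ with $v \in \E$. Then $v$ satisfies $\p_t v = L_k v + F(r, v)$, where $L_k = \p_r^2 + r^{-1}\p_r - k^2/r^2$ is the $k$-equivariant radial Laplacian on $\R^2$ and $F$ encodes the smooth, locally Lipschitz nonlinearity coming from $\sin$ (tamed via the embedding $\|v\|_{L^\infty} \lesssim \|v\|_\E$). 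A Duhamel iteration in $C^0([0, T]; \E)$ with $T$ small (depending on $\|v(0)\|_\E$ and the fixed reference profile) yields existence and uniqueness of a short-time solution in $\E_{\ell, m}$, and standard parabolic smoothing makes $u$ smooth on $(0, T_+) \times (0, \infty)$. Concatenating such short-time pieces produces the maximal interval $[0, T_+)$.

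To obtain~\eqref{eq:energy-identity}, multiply~\eqref{eq:hf} by $\p_t u$ and integrate against $r\,\ud r$. For smooth solutions this yields $\tfrac{\ud}{\ud t}E(u(t)) = -2\pi\,\| \calT(u(t))\|_{L^2}^2$; the vanishing of $u - \ell\pi$ and $u - m\pi$ at the endpoints $r = 0$ and $r = \infty$, forced by finite energy and $k$-equivariance, kills the boundary contributions. Integrating in $t$ from $t_1$ to $t_2$ gives~\eqref{eq:energy-identity}, and taking $t_1 = 0$, $t_2 \uparrow T_+$ together with $E \geq 0$ yields~\eqref{eq:tension-L2}. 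Absolute continuity and monotonicity of $E(u(t))$ follow immediately from the identity and the non-negativity of the dissipation.

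The main obstacle is the blow-up criterion. Arguing by contradiction, suppose $T_+ < \infty$ but there exist $\eps_0 > 0$ and $r_0 > 0$ with $\limsup_{t \to T_+}E(u(t); 0, r_0) < \eps_0$. The key tool is an $\eps$-regularity estimate: whenever the local Dirichlet energy on a parabolic cylinder of scale $\rho$ lies below a universal threshold $\eps_0$, $u$ satisfies uniform smoothness bounds on the concentric sub-cylinder, with constants depending only on $\eps_0$, $\rho$, and the total initial energy. Combined with~\eqref{eq:tension-L2} and a covering of a neighborhood of the only possible equivariant concentration point $r = 0$ by such cylinders, this produces uniform $\E$-bounds on $u(t)$ as $t \to T_+^-$, which are enough to pass to a limit $u(T_+) \in \E_{\ell, m}$ and restart the short-time theory, contradicting maximality of $T_+$. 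The hardest technical piece, and the content of~\cite[Theorem 4.1]{Struwe85}, is the $\eps$-regularity assertion itself, proved via a localized monotonicity formula for the Dirichlet energy together with iteration of local $L^2$ energy inequalities; everything else in the lemma is a soft consequence.
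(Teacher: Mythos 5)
The paper does not prove this lemma: it is quoted from Struwe \cite[Theorem 4.1]{Struwe85}, and the accompanying remark indicates the intended route for $\R^2$ -- classical short-time existence for \emph{smooth} data (Lin--Wang), preservation of equivariance, passage to finite-energy data as limits of smooth solutions via Struwe's local energy estimates, and interior parabolic regularity. Your treatment of the energy identity \eqref{eq:energy-identity} (multiply by $\p_t u$, integrate, justify by approximation) and of the blow-up criterion \eqref{eq:bu-crit} via $\eps$-regularity and concentration at $r=0$ is consistent with that route and is fine in outline.

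The genuine gap is the existence/uniqueness step. A Duhamel contraction in $C^0([0,T];\E)$ with $T$ small depending only on $\|v(0)\|_{\E}$ cannot close, because the problem is energy-critical: after subtracting the reference profile, the worst part of the nonlinearity is $k^2 r^{-2}\,O(v^2)$, which scales exactly like $\p_t-\Delta$ under $v\mapsto v(\cdot/\lambda^2,\cdot/\lambda)$, while $\|\cdot\|_{\E}$ is scale-invariant. Consequently no estimate of the form $\|\int_0^t e^{(t-s)L_k} r^{-2}v^2\,ds\|_{C^0_T\E}\lesssim T^{\theta}\|v\|_{C^0_T\E}^2$ with $\theta>0$ can hold (rescaling sends the constant to zero), so smallness of $T$ alone produces no contraction; nor is $r^{-2}v^2$ even in $L^2_x$ for general $v\in\E$ (the needed $L^4$ bound on $v/r$ fails for, e.g., $v\sim(\log(1/r))^{-\alpha}$ near $r=0$ with $\alpha$ slightly above $1/2$). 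To repair this you must either (a) work first with smooth data, where classical quasilinear parabolic theory applies, and then obtain energy-class solutions as limits using Struwe's $\eps$-regularity and local energy inequalities -- which is the paper's cited route -- or (b) run a critical fixed-point argument in a space $C^0_T\E\cap X_T$ with an auxiliary scale-noninvariant norm whose smallness as $T\to0$ depends on the \emph{profile} of $v(0)$, not merely $\|v(0)\|_{\E}$. Relatedly, uniqueness in the energy class is itself delicate for the $2$d harmonic map heat flow (it holds in the class of solutions with non-increasing energy, not for arbitrary weak solutions), so it cannot simply be read off from a contraction that does not close. The remainder of your argument is unaffected once local existence is set up correctly.
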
 

\begin{rem} 
Local well-posedness is proved by Struwe for the HMHF without symmetry assumptions in the case of maps from a closed Riemann surface $\calM \to \Sp^2$. For the case of maps from $\R^2$ we refer the reader to Lin and Wang~\cite[Theorem 5.2.1]{Lin-Wang} for the short time existence of  regular solutions. As equivariant symmetry is preserved by the flow, we obtain regular equivariant solutions to~\eqref{eq:hf} by taking equivariant initial data. Solutions with finite energy initial data are then obtained as limits of smooth solutions, and in~\cite{Struwe85} Struwe proved these solutions are regular, e.g., $C^2$, on any compact time interval $[\tau, T] \subset (0, T_+)$. We note that in the equivariant case the energy can only concentrate at the origin $r=0$, giving the form of the blow-up criterion in~\eqref{eq:bu-crit}. 
\end{rem}

\subsection{Basic estimates}

\begin{lem}  \label{lem:pi} 
Fix integers $\ell, m$. For every $\eps>0$ and $R_0 >1$,  there exists a $\de>0$ with the following property. Let $0 \le R_1 < R_2\le \infty$ with $R_2/ R_1 \ge R_0$, and $ u \in \E_{\ell, m}$ be such that $E( u; R_1, R_2)  < \de$. Then, there exists $\ell_0 \in \Z$  such that $| u(r) - \ell_0 \pi|<\eps$ for almost all $r \in (R_1, R_2)$. 

Moreover, there exist  constants $C=C(R_0), \al= \al(R_0)>0$ such that if $E(u; R_1, R_2)< \al$,  then 
\EQ{ \label{eq:H-E-comp} 
 \| u - \ell_0 \pi \|_{\E(R_1, R_2)} \le C E(  u; R_1, R_2). 
 }
\end{lem}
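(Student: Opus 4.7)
The plan is to exploit the two summands of the energy separately: the $\sin^2 u / r$ term forces $u$ to be logarithmically close to the set $\pi \bZ$, while the $(\p_r u)^2 r$ term gives local Cauchy--Schwarz control on the variation of $u$. Throughout we work with the continuous representative of $u$, which exists because $u(e^{\rho})$ lies in $H^1(\bR)$. Set $\eta := \eps/4$. Let $B := \{r \in (R_1, R_2) : \operatorname{dist}(u(r), \pi \bZ) \geq \eta\}$ denote the ``bad set''. Since $|\sin u| \geq \sin \eta$ on $B$, Chebyshev gives
\begin{equation*}
\int_B \frac{\ud r}{r} \;\leq\; \frac{1}{\sin^2 \eta}\int_{R_1}^{R_2}\frac{\sin^2 u}{r}\,\ud r \;\leq\; \frac{C\, E(u; R_1, R_2)}{k^2 \sin^2 \eta} \;=:\; L_{\mathrm{bad}}.
\end{equation*}
I will choose $\delta$ small enough that $L_{\mathrm{bad}} < \tfrac12 \log R_0$, which in particular rules out $B = (R_1, R_2)$ and forces the complementary ``good set'' $G := (R_1, R_2)\setminus B$ to be nonempty. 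Each $r \in G$ has a unique label $\ell(r)\in\bZ$ with $|u(r) - \ell(r)\pi| < \eta$.

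The next step is to show that $\ell(\cdot)$ is constant on $G$, defining the integer $\ell_0$. Suppose for contradiction that $r_0 < r_1$ lie in $G$ with $\ell(r_0) \neq \ell(r_1)$; by continuity of $u$ and $\eta < \pi/4$, there exist $a < b$ in $(r_0, r_1)$ with $u(a), u(b)$ equal to two consecutive quarter-points $\ell_0 \pi + \pi/4$, $\ell_0 \pi + 3\pi/4$ and $u([a,b]) \subset [\ell_0\pi + \pi/4, \ell_0 \pi + 3\pi/4]$. On $[a,b]$ we have $|\sin u| \geq 1/\sqrt{2}$, giving $\log(b/a) \leq C E/k^2$, while the one-dimensional Cauchy--Schwarz bound $|u(b) - u(a)|^2 \leq \log(b/a) \cdot \int_a^b (\p_r u)^2 \,r\,\ud r$ combined with $u(b)-u(a) = \pi/2$ gives $\log(b/a) \geq \pi^3/(4E)$. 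Multiplying these yields a lower bound on $E$ independent of $\delta$, a contradiction once $\delta$ is small.

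To upgrade the label-consistency to a pointwise bound, fix any $r \in (R_1, R_2)$. If $r \in G$ we are done. Otherwise let $(c,d)$ be the maximal subinterval of $B$ containing $r$. Because $\int_c^d \ud r/r \leq L_{\mathrm{bad}} < \log R_0 \leq \log(R_2/R_1)$, at least one endpoint, say $d$, lies strictly inside $(R_1, R_2)$; by continuity $|u(d) - \ell_0 \pi| = \eta$ (and the integer realizing this must be $\ell_0$ by the preceding step). Then
\begin{equation*}
|u(r) - u(d)| \;\leq\; \sqrt{\log(d/r)}\,\Big(\!\int_r^d (\p_r u)^2\, r \ud r\Big)^{1/2} \;\leq\; \sqrt{L_{\mathrm{bad}}\, E/\pi},
\end{equation*}
and the right-hand side is made $< \eps/2$ by shrinking $\delta$ further, yielding $|u(r) - \ell_0 \pi| < \eta + \eps/2 < \eps$. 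Boundary cases $R_1 = 0$ or $R_2 = \infty$ cause no trouble: a bad component extending to $r=0$ or $r=\infty$ would have infinite logarithmic measure, contradicting the bound on $L_{\mathrm{bad}}$.

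For the second claim, apply the first part with $\eps = \pi/3$ (so $\alpha = \alpha(R_0)$ depends only on $R_0$ and $k$). Then $|u(r) - \ell_0\pi| < \pi/3 < \pi/2$ pointwise, and the elementary inequality $|\sin \theta| \geq \tfrac{2}{\pi}|\theta|$ for $|\theta| \leq \pi/2$ yields $(u - \ell_0 \pi)^2 \leq \tfrac{\pi^2}{4} \sin^2 u$. Consequently
\begin{equation*}
\|u - \ell_0 \pi\|_{\cE(R_1, R_2)}^2 \;=\; \int_{R_1}^{R_2}\Big((\p_r u)^2 + \frac{k^2 (u-\ell_0\pi)^2}{r^2}\Big)\,r\,\ud r \;\leq\; C\,E(u; R_1, R_2),
\end{equation*}
which is the desired comparability estimate (interpreting the stated inequality with its natural squared left-hand side). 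The main technical point is the propagation step: ensuring that the local Cauchy--Schwarz bound, combined with the logarithmic smallness of the bad set, suffices to rule out label jumps and to close the estimate on bad intervals abutting $R_1$ or $R_2$; this is what dictates the dependence $\delta = \delta(\eps, R_0)$.
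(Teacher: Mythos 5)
Your proof is correct, but it takes a genuinely different route from the paper's. The paper first locates a single radius $r_0$ with $u(r_0)$ close to $\pi\bZ$ (using the same observation you use: if $|\sin u|$ is bounded below on a set of logarithmic measure $\geq \log R_0$, the potential part of the energy is bounded below), and then controls the oscillation in one stroke via the monotone function $G(u)=\int_0^u |\sin\rho|\,\ud\rho$ together with the pointwise inequality $|\sin u|\,|\p_r u|\le \tfrac12\big(r(\p_r u)^2+\sin^2(u)/r\big)$, which gives $|G(u(r))-G(u(r_0))|\lesssim E(u;R_1,R_2)$ for every $r$; no good/bad decomposition, label-constancy step, or Cauchy--Schwarz in logarithmic coordinates is needed. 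Your argument instead runs a Chebyshev bound on the logarithmic measure of the bad set, a quarter-point crossing argument (Cauchy--Schwarz against the potential term) to fix the label, and a propagation estimate across bad components; it is longer but equally valid, more elementary in flavor, and makes the dependence $\de=\de(\eps,R_0)$ completely explicit. Two small points to tighten: you should assume $\eps<\pi/4$ at the outset (harmless, since the statement for small $\eps$ implies it for large $\eps$, and your uniqueness of labels and crossing argument need it); and at an interior endpoint $d$ of a bad component you only get $\dist(u(d),\pi\bZ)=\eta$ directly --- since $d$ need not be a limit of good points, the identification of the nearest multiple as $\ell_0\pi$ is not literally ``the preceding step,'' but the identical crossing argument applied to $d$ and any good point supplies it. Finally, your reading of \eqref{eq:H-E-comp} with the squared norm on the left is indeed what the paper's own proof establishes (and what scaling dictates), so flagging that interpretation is appropriate.
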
 
\begin{proof} 
By an approximation argument we can assume $u \in \E_{\ell, m}$ is smooth. First, we show that for any $\eps_0>0$, there exists $r_0 \in [R_1, R_2]$ such that $| u(r_0) - \ell_0 \pi|< \eps_0$ for some $\ell_0 \in \Z$ as long as $E( (u; R_1, R_2)$ is sufficiently small. 
If not, one could find $\eps_1>0$,  $0< R_1< R_2$, and a sequence $u_n \in \E_{\ell, m}$ so that $E( u_n; R_1; R_2) \to 0$ as $n \to \infty$ but such that $\inf_{r \in[R_1, R_2], \ell \in \Z}   | u_n(r) - \ell \pi| \ge \eps_1$. The latter condition gives a constant $c( \eps_1)>0$ such that $\inf_{r \in [R_1, R_2]} |\sin( u_n(r))| \ge c(\eps_1)$. But then
\EQ{
E( u_n; R_1; R_2)  \ge \frac{k^2}{2}  \int_{R_1}^{R_2} \sin^2( u_n(r)) \,  \frac{\ud r}{r} \ge \frac{k^2}{2} c(\eps_1)^2 \log (R_2/R_1), 
}
which is a contradiction. Next define the function, 
$
G(u) = \int_0^u \abs{ \sin \rho} \, \ud \rho, 
$
and for $r_1 \in (R_1, R_2)$  note the inequality, 
\EQ{
\abs{G(u(r_0)) - G( u(r_1))} = \Big|\int_{u(r_1)}^{u(r_0)}  \abs{ \sin \rho} \, \ud \rho \Big| = \Big|\int_{r_1}^{r_0} \abs{ \sin u(r) } \abs{ \p_r u(r)} \, \ud r \Big| \lesssim E(u; R_1, R_2). 
}
We conclude using that $G$ is continuous and increasing that $| u(r) - \ell_0 \pi|<\eps$ for all $r \in (R_1, R_2)$. As long as $\eps>0$ is small enough we see that in fact, $\sin^2(u(r)) \ge \frac{1}{2}| u(r) - \ell_0 \pi|^2$ for all $r \in (R_1, R_2)$ and~\eqref{eq:H-E-comp} follows. 
\end{proof}

Given a mapping $u: (0, \infty) \to \R$ we define its energy density, 
\EQ{
 \bfe(u(r), r):= \frac{1}{2} \Big( (\p_r u(r))^2 + \frac{k^2}{r^2} \sin^2(u(r)) \Big) . 
}

\begin{lem}[Localized energy inequality]  Let $ I \subset  [0, \infty)$ be a time interval, and let $\phi: I \times (0, \infty) \to [0, \infty)$ be a smooth function. Let $u(t)\in \E_{\ell, m}$ be a solution to~\eqref{eq:hf} on $I$. Then, for any $t_1<t_2 \in I$, 
\EQ{ \label{eq:local-energy} 
&\int_{t_1}^{t_2} \int_0^\infty (\p_t u (t, r))^2  \phi(t, r)^2 \, r \, \ud r \ud t + \int_0^\infty \bfe(u(t_2, r), r) \phi(t_2, r)^2 \, r \, \ud r  \\
& =\int_0^\infty  \bfe(u(t_1, r), r) \phi(t_1, r)^2 \, r\, \ud r - 2\int_{t_1}^{t_2} \int_0^\infty \p_t u(t, r) \p_r u(t, r) \phi(t, r) \p_r \phi(t, r)\,r \, \ud r \ud t  \\
&\quad + 2\int_{t_1}^{t_2} \int_0^\infty \bfe(u(t, r), r)\phi(t, r) \p_t \phi(t, r) \, r \, \ud r \, \ud t
}
If $\phi(t, r)$ satisfies, $\p_t \phi(t, r) \le 0$ for all $t \in [t_1, t_2]$ then, 
\EQ{ \label{eq:loc-en-ineq-1} 
\int_0^\infty \bfe(u(t_2), r)& \phi(t_2, r)^2 \, r \, \ud r  + \frac{1}{2} \int_{t_1}^{t_2} \int_0^\infty (\p_t u (t, r))^2  \phi(t, r)^2 \, r \, \ud r \ud t \\
& \le \int_0^\infty  \bfe(u(t_1), r) \phi(t_1, r)^2 \, r\, \ud r  + 2\int_{t_1}^{t_2}\int_0^\infty (\p_r u(t, r))^2 (\p_r \phi(t, r))^2 \, r \,\ud r \ud t, 
}
and, 
\EQ{ \label{eq:loc-en-ineq} 
\int_0^\infty \bfe(u(t_2), r)& \phi(t_2, r)^2 \, r \, \ud r  + \int_{t_1}^{t_2} \int_0^\infty (\p_t u (t, r))^2  \phi(t, r)^2 \, r \, \ud r \ud t \\
& \le \int_0^\infty  \bfe(u(t_1), r) \phi(t_1, r)^2 \, r\, \ud r \\
&\quad  + 2\sqrt{E(u(t_1))}(t_2 - t_1)^{\frac{1}{2}} \Big(\int_{t_1}^{t_2}\int_0^\infty (\p_t u(t, r))^2 (\p_r \phi(t, r))^2(\phi(t, r))^2 \, r \,\ud r \ud t \Big)^{\frac{1}{2}}. 
}

\end{lem}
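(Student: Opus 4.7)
The plan is a direct weighted-energy computation followed by two different applications of Cauchy--Schwarz on the resulting cross term. I would first establish the identity \eqref{eq:local-energy} by differentiating the time-dependent weighted energy $t \mapsto \int_0^\infty \bfe(u(t,r), r)\, \phi(t,r)^2\, r\,\ud r$ under the integral sign. The chain rule produces two contributions: the derivative of $\bfe(u,r)$, namely $\p_r u\, \p_t \p_r u + \tfrac{k^2}{2r^2}\sin(2u)\, \p_t u$, and the derivative of the cutoff $\phi^2$, namely $2\bfe(u,r)\,\phi\, \p_t \phi$. Integrating the first piece against $r\phi^2$ by parts in $r$ (with vanishing boundary contributions, using the $r$-weight at the origin and finite-energy decay at infinity) gives
\begin{equation*}
\int_0^\infty \p_r u\, \p_t \p_r u\, \phi^2 r\,\ud r = -\int_0^\infty \p_t u\, \big(\p_r^2 u + \tfrac{1}{r}\p_r u\big)\phi^2 r\,\ud r - 2\int_0^\infty \p_t u\, \p_r u\, \phi\, \p_r \phi\, r\,\ud r,
\end{equation*}
after which the bulk term combines with the $\sin(2u)$ contribution to reconstruct $\calT(u) = \p_t u$, producing $-\int (\p_t u)^2 \phi^2 r\,\ud r$. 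Integrating this pointwise-in-$t$ identity over $[t_1, t_2]$ and rearranging yields \eqref{eq:local-energy}.

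For both inequalities I would start from \eqref{eq:local-energy} and invoke the hypothesis $\p_t \phi \le 0$ to discard the last term on the right, which is nonpositive since $\bfe \ge 0$ and $\phi \ge 0$. What remains is to estimate the cross term $2\int_{t_1}^{t_2}\!\int_0^\infty \p_t u\, \p_r u\, \phi\, \p_r \phi\, r\,\ud r\, \ud t$ in two different ways. For \eqref{eq:loc-en-ineq-1}, Young's inequality $2|ab| \le \tfrac{1}{2} a^2 + 2 b^2$ applied with $a = \p_t u\,\phi$ and $b = \p_r u\, \p_r \phi$ produces $\tfrac{1}{2}\int (\p_t u)^2 \phi^2\, r\,\ud r\,\ud t$ plus $2\int (\p_r u)^2 (\p_r \phi)^2\, r\,\ud r\,\ud t$, and absorbing the first piece into the left-hand side yields the claim. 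For \eqref{eq:loc-en-ineq}, I would instead split $\p_t u\,\p_r u\, \phi\, \p_r\phi = (\p_r u)\cdot(\p_t u\,\phi\, \p_r\phi)$ and apply Cauchy--Schwarz in space-time, producing the product of $\big(\int_{t_1}^{t_2}\!\int (\p_r u)^2\, r\,\ud r\,\ud t\big)^{1/2}$ and the square root of the stated double integral. The first factor is then controlled by the monotonicity of the energy built into \eqref{eq:energy-identity}: since $E(u(t)) \le E(u(t_1))$ for $t \ge t_1$, one has $\int_0^\infty (\p_r u(t))^2\, r\,\ud r \lesssim E(u(t_1))$, and the time integration contributes the factor $(t_2 - t_1) E(u(t_1))$.

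I do not expect any substantive obstacle. The one point that requires care is that differentiation under the integral sign and the spatial integration by parts demand enough regularity of $u$ and enough decay to kill boundary terms at $r = 0$ and $r = \infty$. Both are automatic on any compact $[t_1, t_2] \subset (0, T_+)$ by the $C^2$ regularity statement following the local well-posedness lemma. For the identity and inequalities down to $t_1 = 0$ with merely finite-energy initial data, the standard remedy is to approximate $u_0$ by smooth, well-decaying equivariant data, apply the result to the smooth approximants on $(0, T_+)$, and pass to the limit using $\cE$-continuity of the flow together with lower semi-continuity on the right-hand side.
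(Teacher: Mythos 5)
Your proposal is correct and follows essentially the same route as the paper: the paper obtains the identity \eqref{eq:local-energy} by multiplying the equation by $\p_t u\, \phi^2$ and integrating by parts (your differentiation of the weighted energy is the same computation), and then derives both inequalities from Cauchy--Schwarz/Young, exactly as you do, with the same approximation-by-smooth-solutions remark handling regularity. Your extra detail on absorbing the cross term for \eqref{eq:loc-en-ineq-1} and using energy monotonicity for \eqref{eq:loc-en-ineq} (noting $\int_0^\infty (\p_r u)^2\, r\,\ud r \le \pi^{-1}E(u(t_1))$, so the constant $2$ is indeed attained) simply fills in what the paper leaves implicit.
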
 

\begin{proof} 
By an approximation argument we may assume that $u$ is smooth. Then~\eqref{eq:local-energy} is obtained for smooth solutions to~\eqref{eq:hf} by multiplying the equation by $\p_t u \phi^2$ and integrating by parts. The subsequent inequalities follow from Cauchy-Schwarz. 
\end{proof} 


\subsection{Profile decomposition} 

We state a profile decomposition in the sense of G\'erard~\cite{Gerard}, adapted to sequences of functions in the affine spaces $\E_{\ell, m}$; see also~\cite{BrezisCoron, Lions1, Lions2,  MeVe98, BG}. We use the analysis of sequences in $\E_{\ell, m}$ by Jia and Kenig in~\cite{JK}, which synthesized C\^ote's analysis in~\cite{Cote15}. 

\begin{lem}[Linear profile decomposition]  \label{lem:pd}  Let $\ell, m \in\Z$ and let $ u_n$ be a sequence in $\calE_{\ell, m}$ with $\limsup_{n \to \infty} E( u_n) <\infty$. Then, there exists $K_0 \in \{0, 1, 2, \dots\}$, sequences $ \lam_{n, j} \in (0, \infty)$ for $j \in \{1, \dots, K_0\}$,  $\sigma_{n, i} \in (0, \infty)$ for $i \in \N$, as well as mappings $ \psi^j \in \E_{\ell_j, m_j}$ with $E( \psi^j) < \infty$, and mappings  $ v\lin^{i} \in\E_{0, 0}$ such that for each $J \ge 1$, 
\EQ{
 u_n &= m  \pi +  \sum_{j = 1}^{K_0} ( \psi^j \big(  \frac{ \cdot}{\lam_{n, j}} \big) - m_j  \pi)   + \sum_{i =1}^J  v^i \big(  \frac{ \cdot}{\s_{n, i}} \big) + w_{n}^J( \cdot)
}
so that, 
\begin{itemize} 
\item  the parameters $\lam_{n, j}$ satisfy
\EQ{
\lam_{n, 1} \ll \lam_{n, 2} \ll \dots  \ll  \lam_{n, K_0} \mas n \to \infty; 
}
and for each $j$  one of  $\lam_{n, j} \to 0$, $\lam_{n, j} = 1$ for all $n$, or $\lam_{n, j} \to \infty$ as $n \to \infty$, holds;    
\item for each $i$ either $\s_{n, i} \to 0$, $\s_{n, i} = 1$ for all $n$, or $\s_{n, i} \to \infty$ as $n \to \infty$; 
\item for each $i \in \N$, 
\EQ{
\frac{\lam_{n, j}}{\s_{n, i}} + \frac{\s_{n, i}}{\lam_{n, j}}   \to \infty \mas n \to \infty \quad \forall j = 1, \dots, K_0; 
}
\item the scales $\s_{n, i}$  satisfy, 
\EQ{
\frac{\s_{n, i}}{\s_{n, i'}} + \frac{ \s_{n, i'}}{\s_{n, i}}  \to \infty \mas n \to \infty; 
}
\item the integers $\ell_j$ and $m_j$ satisfy, $\abs{\ell_j - m_j} \ge 1$, and, 
\EQ{
\ell = m + \sum_{j=1}^{K_0} ( \ell_j - m_j) ; 
}
\item the error term $\bs w_{n}^J$ satisfies, 
\EQ{
& w_{n}^J( \lam_{n, j} \cdot) \rightharpoonup 0 \in \E \mas n \to \infty\\
& w_{n }^J(  \s_{n, i}  \cdot) \rightharpoonup 0\in \E \mas n \to \infty
}
for each $J \ge 1$, each $j = 1, \dots, K_0$, and $i \in \N$,  and vanishes strongly in the sense that 
\EQ{ \label{eq:w-to-0} 
\lim_{J \to \infty} \limsup_{n \to \infty}   \| w_{n}^J \|_{L^\infty}   = 0; 
}
\item the following  pythagorean decomposition of the nonlinear energy holds:  for each $J \ge 1$, 
\EQ{ \label{eq:pyth} 
E( u_n) &= \sum_{j =1}^{K_0} E( \psi^j)  + \sum_{i =1}^J E( v^j)   + E( w_{n}^J)  + o_n(1) 
} 
as $n \to \infty$. 
 
\end{itemize} 

\end{lem}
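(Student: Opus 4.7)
The plan is to split the extraction into two phases: first pull out the topological profiles $\psi^j$ (those carrying nontrivial winding between $r=0$ and $r=\infty$), then apply a standard Hilbert-space profile decomposition to the residual, which by construction will lie in the linear space $\cE_{0,0}$. For the first phase, I would use Lemma~\ref{lem:pi} to partition each $u_n$ into annular ``plateaus'' on which it stays close to an integer multiple of $\pi$, separated by ``transition'' annuli each carrying a definite amount of energy. Since $E(u_n)$ is uniformly bounded, the number of transitions is uniformly bounded in $n$, and after passing to a subsequence this number stabilizes at some $K_0$. At each transition I choose a representative scale $\lambda_{n,j}$, order them to obtain $\lambda_{n,1} \ll \cdots \ll \lambda_{n,K_0}$, and normalize each via the trichotomy $\lambda_{n,j} \to 0$, $=1$, or $\to \infty$. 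A diagonal extraction guarantees that $u_n(\lambda_{n,j}\,\cdot)$ converges weakly in $\Hloc$ to a limit $\psi^j \in \cE_{\ell_j, m_j}$ with $|\ell_j - m_j| \geq 1$ coming from the plateau construction, and telescoping plateau values yields $\ell - m = \sum_j (\ell_j - m_j)$.

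Once the topological part is removed, set
\begin{equation}
\tilde u_n := u_n - m\pi - \sum_{j=1}^{K_0}\bigl(\psi^j(\cdot/\lambda_{n,j}) - m_j\pi\bigr),
\end{equation}
so that $\tilde u_n \in \cE_{0,0}$ and $\tilde u_n(\lambda_{n,j}\,\cdot) \rightharpoonup 0$ in $\cE$ for every $j$. I would then apply a G\'erard-type linear profile decomposition in the equivariant Hilbert space $\cE_{0,0}$, as developed in this critical-Sobolev setting by C\^ote~\cite{Cote15} and Jia--Kenig~\cite{JK}, extracting profiles $v^i$ at scales $\sigma_{n,i}$ with mutual and relative ($\lambda$ vs.\ $\sigma$) scale orthogonality. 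The $L^\infty$-vanishing statement \eqref{eq:w-to-0} is handled by the standard contradiction argument: if $\|w_n^J\|_{L^\infty}$ did not go to zero uniformly in $J$, one could extract yet another profile, contradicting the maximality in the diagonal construction.

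For the Pythagorean decomposition \eqref{eq:pyth}, the linear $\cE$-identity follows from the scale orthogonality by a standard change-of-variables computation: cross terms of the form $\langle \psi^j(\cdot/\lambda_{n,j}) \mid \psi^{j'}(\cdot/\lambda_{n,j'})\rangle_{\cE}$ vanish in the limit whenever $\lambda_{n,j}/\lambda_{n,j'}$ tends to $0$ or $\infty$, and similarly for the mixed terms involving the $v^i$ and the error. The difference between the nonlinear energy $E(u)$ and the quadratic $\tfrac{1}{2}\|u - c\pi\|_{\cE}^2$ involves only the integral of $k^2(u^2 - \sin^2 u)/r^2$, which is uniformly controlled by the $L^\infty$ and $\cE$ norms, so scale separation together with the $L^\infty$-vanishing of the error gives the required cancellations.

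The main obstacle is the topological extraction in Phase 1: verifying that it terminates in finitely many steps with a clean $(\ell_j, m_j)$ assignment per profile, and that the residual $\tilde u_n$ indeed lies in the linear space $\cE_{0, 0}$. This rests on carefully quantifying how small the energy on an annulus must be for it to correspond to a single plateau (i.e.\ no topological content), which is precisely Lemma~\ref{lem:pi}, combined with the energy quantization for topologically nontrivial equivariant maps. Once this is secured, Phases 2 and 3 reduce to standard concentration-compactness machinery.
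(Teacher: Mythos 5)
Your two-phase structure is the same skeleton as the paper's: first extract the sector-nontrivial profiles $\psi^j$ (the paper simply invokes Jia--Kenig \cite{JK}, whose proof is essentially the plateau/transition analysis via Lemma~\ref{lem:pi} and energy quantization that you sketch), then run a concentration--compactness decomposition on the residual in $\E_{0,0}$. Where you diverge is the implementation of the second phase: the paper substitutes $H_n := r^{-k}h_n$, so that $\|h_n\|_{\E}\simeq\|H_n\|_{\dot H^1(\R^{2k+2})}$, applies G\'erard's theorem \cite{Gerard} verbatim in $\dot H^1(\R^{2k+2})$, and then converts the $L^{p^*}$-smallness of the error into \eqref{eq:w-to-0} through the explicit pointwise inequality $\sup_r|w(r)|^{\frac{p^*}{2}+1}\lesssim\big(\int_0^\infty w^{p^*}\frac{\ud r}{r}\big)^{1/2}\big(\int_0^\infty(\p_r w)^2\,r\,\ud r\big)^{1/2}$; you instead posit a G\'erard-type decomposition directly in the equivariant space (as in \cite{Cote15,JK}) and obtain the $L^\infty$-vanishing by a further-profile-extraction contradiction. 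Both routes work: yours avoids the dimensional reduction but requires you to actually justify the re-extraction step (if $\|w_n^J\|_{L^\infty}\ge\delta$ along a subsequence, rescale to the point of near-maximum, use that weakly convergent radial sequences in $\E$ converge uniformly on compact annuli to produce a nontrivial profile with $\|\psi\|_{\E}\gtrsim\delta$, and invoke the summability of profile energies to terminate), whereas the paper gets \eqref{eq:w-to-0} for free from G\'erard's $L^{p^*}$ statement. Your treatment of \eqref{eq:pyth} is also slightly loose as written -- for the large profiles $\psi^j$ the replacement of $\sin^2 u$ by a quadratic is only valid after localizing to regions where all other profiles sit near multiples of $\pi$ -- but this is the standard argument and is at the same level of detail as the paper's own sketch, so I see no genuine gap.
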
 

\begin{proof}[Sketch of Proof]
We follow Jia and Kenig's argument~\cite[Proof of Lemma 5.5]{JK} to first extract the profiles $\psi^j \in \E_{\ell_j, m_j}$ at the scales $\lam_{n, j}$, see~\cite[Pages 1594-1600]{JK}. Since these all have energy $\ge E( Q)$, there can only be finitely many of them, which defines the non-negative integer $K_0$. The conclusion of their argument yields a sequence, 
\EQ{
h_n := u_n - m \pi - \sum_{j= 1}^{K_0} (\psi^j_{\lam_{n, j}} - m_j \pi)   \in \E_{0, 0}
}
with $\limsup_{n \to \infty} \| h_n \|_{\E} < \infty$. Setting $H_n:= r^{-k} h_n$ we see that $\limsup_{n \to \infty} \| H_n \|_{\dot H^{1}(\R^d)} < \infty$ for $d = 2k + 2$ (here we view $H_n$ as a sequence of radially symmetric functions on $\R^d$).  Thus we may apply G\'erard's profile decomposition~\cite[Theorem 1.1]{Gerard} for sequences in $\dot{H}^1(\R^d)$ to the sequence $H_n$ obtaining sequences of scales $\sigma_{n, i}$ and profiles $V^i$ so that for $W_n^J$ defined by 
\EQ{
H_n = \sum_{i=1}^J \sigma_{n, i}^{-\frac{d}{p^*}} V\Big( \frac{\cdot}{\sigma_{n, i}} \Big)  + W_{n}^J 
}
we have 
\EQ{
\lim_{J \to \infty}  \limsup_{n \to \infty} \| W_{n}^J \|_{L^{p^*}} = 0, 
}
along with the usual orthogonality of the scales and the pythagorean expansion of the $\dot H^1$ norm.  Note that here $p^*:= \frac{ 2d}{d-2}$ is the critical Sobolev exponent. We set $v^i(r) := r^k V^i(r)$ and $w_n^J(r) := r^{k} W_{n}^J(r)$ for each $i, n, J$. Note that $w_n^J \in \E$ and 
\EQ{ \label{eq:wp*} 
\lim_{J \to \infty}  \limsup_{n \to \infty} \int_0^\infty (w_n^J(r))^{p^*}  \, \frac{\ud r}{r}  = 0 .
}
We conclude by observing the inequality 
\EQ{
\sup_{r>0} \abs{ w(r)}^{\frac{p^*}{2} +1} \le C(p^*)  \Big(  \int_0^\infty (w(r))^{p^*}  \, \frac{\ud r}{r} \Big)^{\frac{1}{2}}  \Big(  \int_0^\infty (\p_r w(r))^{2}  \, r \, \ud r \Big)^{\frac{1}{2}}, 
}
which holds for all $w \in \E$. Thus~\eqref{eq:wp*} combined with the above gives the vanishing of the error as in~\eqref{eq:w-to-0}. 
\end{proof} 


\subsection{Multi-bubble configurations}
We study properties of finite energy maps near a multi-bubble configuration as in Definition~\ref{def:multi-bubble}. We record here several lemmas proved in~\cite{JL6}. 

The operator $\LL_{\calQ}$ obtained by linearization of~\eqref{eq:hf} about an $M$-bubble configuration $ \calQ(m, \vec \iota, \vec \lam)$ is given by, 
\EQ{  \label{eq:LQ-def} 
\LL_{\calQ} \, g := \uD^2 E(\calQ(m, \vec\iota, \vec \lam)) g = - \p_r^2 g - \frac{1}{r} \p_r g + \frac{k^2}{r^2} f'(\calQ(m, \vec\iota, \vec \lam) )g, 
}
where $f'( z) = \cos 2 z$. 
 Given $g \in \E$, 
\EQ{
\La \uD^2 E(\calQ(m, \vec \iota, \vec \lam))  g \mid  g \Ra =  \int_0^\infty \Big( (\p_r g(r))^2 + \frac{k^2}{r^2} f'(\calQ(m, \vec \iota, \vec \lam)) g(r)^2 \, \Big) r \ud r. 
}
An important instance of the operator $\LL_{\calQ}$ is given by linearizing \eqref{eq:hf} about a single harmonic map $ \calQ(m, M, \vec\iota, \vec \lam) = Q_{\lam}$. In this case we use the short-hand notation, 
\EQ{ \label{eq:LL-def} 
\LL_{\lam} := (-\De + \frac{k^2}{r^2}) + \frac{k^2}{r^2} ( f'(Q_{\lam}) - 1) 
}
We write $\calL := \calL_1$.
For each $k \ge 1$,  
\EQ{
\Lam Q(r):= r \p_r Q(r)  = k \sin Q = 2 k  \frac{ r^k}{1 + r^{2k}}
}
 When $k \ge 2$, $\Lam Q$ is a zero energy eigenfunction for $\calL$, i.e.,  
\EQ{
\calL \Lam Q = 0, \mand \Lambda Q  \in L^2_{\textrm{rad}}(\R^2).
}
 When $k=1$, $\calL \Lam Q = 0$ holds but  $\Lam Q \not \in L^2$ due to slow decay as $r \to \infty$  and $0$ is called a threshold resonance. 

 
We define a smooth non-negative function $\calZ \in C^{\infty}(0, \infty) \cap L^1((0, \infty), r\, \ud r)$ by  
 \EQ{ \label{eq:Z-def} 
 \calZ(r) := \begin{cases}   \chi(r) \Lam Q(r) \mif k =1, 2 \\ \Lam Q(r)  \mif k \ge 3 \end{cases}
 }
and note that  
 \EQ{
 \ang{ \calZ \mid \Lam Q} >0  \label{eq:ZQ} . 
 }
The precise form of $\calZ$ is not so important, rather only that it is not perpendicular to $\Lam Q$ and has sufficient decay and regularity. We fix it as above because of the convenience of setting $\calZ = \Lam Q$  if $k\ge 3$. 
We record the following localized coercivity lemma proved in~\cite{JJ-AJM}.

\begin{lem}[Localized coercivity for $\LL$] \emph{\cite[Lemma 5.4]{JJ-AJM}}  \label{l:loc-coerce} 
Fix $k \ge 1$.  There exist uniform constants $c< 1/2, C>0$ with the following properties. Let $g \in \E$. Then, 
\EQ{ \label{eq:L-coerce}
\ang{ \LL g \mid g} \ge c  \| g \|_{H}^2  - C\ang{ \calZ \mid g}^2 
}
If $R>0$ is large enough then,  
\EQ{ \label{eq:L-loc-R} 
(1-2c)&\int_0^{R} \Big((\p_r g)^2 + k^2 \frac{g^2}{r^2} \Big) \, r \ud r +  c \int_{R}^\infty \Big((\p_r g)^2 + k^2 \frac{g^2}{r^2} \Big) \, r \ud r  + \La\frac{ k^2}{r^2}(f'(Q) - 1) g\mid g\Ra \\
& \ge  - C\ang{ \calZ \mid g}^2. 
}
If $r>0$ is small enough, then
\EQ{ \label{eq:L-loc-r} 
(1-2c)&\int_r^{\infty} \Big((\p_r g)^2 + k^2 \frac{g^2}{r^2} \Big) \, r \ud r +  c \int_{0}^r \Big((\p_r g)^2 + k^2 \frac{g^2}{r^2} \Big) \, r \ud r  + \La\frac{ k^2}{r^2}(f'(Q) - 1) g\mid g\Ra \\
& \ge  - C\ang{ \calZ \mid g}^2. 
}
\end{lem}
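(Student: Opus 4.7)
All three inequalities follow from the spectral structure of $\calL$ at the harmonic map $Q$. For the global coercivity \eqref{eq:L-coerce}, my plan is to exploit that $\Lam Q$ spans the zero-energy subspace of $\calL$: it is an $L^2$-eigenfunction of eigenvalue $0$ for $k \ge 2$ and a threshold resonance for $k = 1$, and in either case $\Lam Q \in \calE$. Given $g \in \calE$, I will write $g = \alpha \Lam Q + h$ with $\alpha := \ang{\calZ\mid g}/\ang{\calZ\mid\Lam Q}$, so that $\ang{\calZ\mid h} = 0$. Because $\calL \Lam Q = 0$, this reduces matters to the subcoercivity
\[
\ang{\calL h\mid h} \ge c_{0} \|h\|_H^2 \qquad \text{whenever } \ang{\calZ\mid h} = 0,
\]
which is a standard consequence of the non-negativity of $\calL$, the simplicity of its kernel/resonance, and the positivity $\ang{\calZ\mid\Lam Q} > 0$ recorded in \eqref{eq:ZQ}; it is proved by a compactness argument combined with a Hardy-type bound controlling the non-compact part of the quadratic form. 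The estimate \eqref{eq:L-coerce} then follows from the triangle bound $\|g\|_H^2 \lesssim |\alpha|^2\|\Lam Q\|_H^2 + \|h\|_H^2 \lesssim \ang{\calZ\mid g}^2 + \ang{\calL g\mid g}$ after rearranging.

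For the localized inequalities \eqref{eq:L-loc-R} and \eqref{eq:L-loc-r}, my plan is to use the explicit formula
\[
V(r) := \tfrac{k^2}{r^2}(f'(Q(r)) - 1) = -\tfrac{8 k^2\, r^{2k-2}}{(1 + r^{2k})^2},
\]
which is non-positive and satisfies $|V(r)|\,r^2 \to 0$ as $r \to 0$ and as $r \to \infty$. A Hardy-type bound in the $k$-equivariant class then yields, for any $c > 0$, the absorption estimates
\[
\Big|\int_R^\infty V g^2\,r\,\ud r\Big| \le c \|g\|_{\calE(R,\infty)}^2 \text{ for } R \text{ large}, \qquad \Big|\int_0^{r_0} V g^2\,r\,\ud r\Big| \le c \|g\|_{\calE(0,r_0)}^2 \text{ for } r_0 \text{ small}.
\]
I will combine these with the global coercivity \eqref{eq:L-coerce}, applied to a smooth cutoff $\chi_R g$ adapted to the scale $R$ (respectively $r_0$), together with a cutoff-localization (IMS-type) identity expressing $\ang{\calL g\mid g}$ as a sum of $\ang{\calL(\chi_R g)\mid \chi_R g}$, $\ang{\calL((1-\chi_R^2)^{1/2} g)\mid (1-\chi_R^2)^{1/2} g}$, and a cutoff error. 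Because $\calZ$ has compact support, one can arrange $\calZ \subset \{\chi_R = 1\}$, so $\ang{\calZ\mid \chi_R g} = \ang{\calZ\mid g}$, and the single term $-C\ang{\calZ\mid g}^2$ suffices on the right-hand side. The symmetric argument, with the cutoff placed at a small scale, gives \eqref{eq:L-loc-r}.

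\emph{Main obstacle.} The delicate point is the localization bookkeeping in the second step. The target \eqref{eq:L-loc-R} is strictly stronger than a direct rescaling of \eqref{eq:L-coerce}, because the exterior coefficient $c$ is smaller than the $(1-c)$ that one would obtain from a naive application of \eqref{eq:L-coerce}. The gap must be filled by the absorption estimate for $\int_R^\infty V g^2\,r\,\ud r$; meanwhile the cutoff error $\int(\p_r \chi_R)^2 g^2\,r\,\ud r$ is of order $\|g\|_{\calE(R,2R)}^2$, and also needs to be absorbed into the exterior coefficient $c$ without spoiling the interior coefficient $(1-2c)$. Achieving this requires choosing $R$ large enough that both the tail of $|V|\,r^2$ and the cutoff error are smaller than $c$, and taking $c$ uniformly small so that the gap between $(1-2c)$ and $c$ leaves room to absorb both contributions.
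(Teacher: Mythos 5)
Two preliminary remarks. First, the paper itself does not prove this lemma: it is imported verbatim from \cite[Lemma 5.4]{JJ-AJM}, so there is no in-paper proof to compare with and your proposal has to stand on its own. Second, your treatment of the global bound \eqref{eq:L-coerce} — split $g = \alpha \Lam Q + h$ with $\ang{\calZ \mid h} = 0$, use $\calL \Lam Q = 0$ and $\ang{\calZ\mid \Lam Q}>0$ from \eqref{eq:ZQ}, and invoke subcoercivity of $\ang{\calL h \mid h}$ on $\{\ang{\calZ\mid h}=0\}$ — is the standard route and is fine as a sketch, though the subcoercivity you cite as ``standard'' is really the entire content of \eqref{eq:L-coerce}. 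One slip to note already here for later use: $\calZ$ is compactly supported only for $k=1,2$; for $k \ge 3$ the paper takes $\calZ = \Lam Q$, so $\ang{\calZ\mid \chi_R g} \neq \ang{\calZ\mid g}$ and the tail $\ang{\calZ(1-\chi_R)\mid g} \lesssim R^{2-k}\| g\|_{\E(R,\infty)}$ has to be estimated (this is fixable, but your argument as stated relies on the false claim).

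The genuine gap is in the localized inequalities, exactly at the point you call the ``main obstacle'': the bookkeeping you propose cannot close. Observe that \eqref{eq:L-loc-R} is equivalent to $\ang{\LL g \mid g} \ge 2c\, \|g\|_{\E(0,R)}^2 + (1-c)\|g\|_{\E(R,\infty)}^2 - C \ang{\calZ\mid g}^2$, i.e.\ you must retain essentially the \emph{full} exterior Hardy--Dirichlet energy. Applying \eqref{eq:L-coerce} to $\chi_R g$ returns only the small fraction $c\,\|\chi_R g\|_{H}^2$, so on the transition annulus (which lies in the exterior region if you cut over $[R,2R]$) you produce a coefficient of order $c$ where $1-c$ is needed — an order-one mismatch that taking $R$ large does not cure. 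Likewise the IMS error $\int (\p_r \chi_R)^2 g^2\, r\,\ud r$ is comparable, with an $R$-independent absolute constant, to the annulus part of $\|g\|_{\E}^2$; it cannot be ``absorbed into the exterior coefficient $c$'', and shrinking $c$ makes matters worse, not better, since it weakens what \eqref{eq:L-coerce} gives you on $\chi_R g$ at the same rate as it weakens the target (even the interior coefficient fails: you need $2c$ there but the global bound with the same $c$ only yields $c$). To salvage a cutoff proof one must keep two constants — prove \eqref{eq:L-coerce} with some $c_1$ and state \eqref{eq:L-loc-R}--\eqref{eq:L-loc-r} with $c$ a fixed small fraction of $c_1$ — place the cutoff transition well inside $(0,R)$, and widen it (e.g.\ a logarithmic cutoff over $[\sqrt{R}, R]$) so that the IMS and cross errors carry $o_R(1)$ coefficients, using $|V(r)| r^2 = 8k^2 r^{2k}(1+r^{2k})^{-2} \to 0$ only for the exterior piece. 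Alternatively, and more simply, argue by contradiction and compactness: along a failing sequence $R_n \to \infty$ normalize $\|g_n\|_{\E}=1$, note $\ang{\calZ\mid g_n}\to 0$, pass to a weak limit, use that the potential term is compact (again because $|V|r^2 \to 0$ at both ends) together with weak lower semicontinuity of $\int_0^{R_n}$ of the free energy, and apply \eqref{eq:L-coerce} to the limit; choosing $c \le c_1/3$ gives the contradiction. As written, your plan is missing the idea that actually bridges the quantitative gap, and the same criticism applies verbatim to \eqref{eq:L-loc-r}.
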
 

As a consequence, (see for example~\cite[Proof of Lemma 2.4]{JKL1} for an analogous argument) one obtains the following coercivity property of the operator $\LL_{\calQ}$. 

\begin{lem} \label{lem:D2E-coerce} \emph{\cite[Lemma 2.19]{JL6}} Fix $k \ge 1$, $M \in \N$. There exist $\eta, c_0>0$ with the following properties. Consider the subset of $M$-bubble configurations $\calQ(m, \vec\iota, \vec \lam)$ for $\vec \iota \in \{-1, 1\}^M$, $\vec \lam \in (0, \infty)^M$ such that, 
\EQ{ \label{eq:lam-ratio} 
\sum_{j =1}^{M-1} \Big( \frac{\lam_j}{\lam_{j+1}} \Big)^k \le \eta^2. 
}
Let $g \in H$ be such that 
\EQ{
0 = \ang{ \calZ_{\U{\lam_j}} \mid g}  \mfor j = 1, \dots M. 
}
for some $\vec \lam$ as in~\eqref{eq:lam-ratio}. Then, 
\EQ{
\ang{ \uD^2 E( \calQ( m, \vec \iota, \vec \lam)) g \mid g} \ge c_0 \| g \|_{\E}^2. 
} 
\end{lem}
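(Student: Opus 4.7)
The plan is a standard bubble-decoupling argument: localize $g$ to dyadic annuli adapted to the scales $\lambda_1 \ll \lambda_2 \ll \cdots \ll \lambda_M$, reduce the multi-bubble coercivity question on each annulus to the single-bubble coercivity Lemma~\ref{l:loc-coerce} (suitably rescaled), and then sum, paying careful attention to commutators between the cutoffs and the orthogonality conditions, which will all be controlled by $\eta$.

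First, I would fix geometric means $r_j := \sqrt{\lambda_j \lambda_{j+1}}$ for $j = 1, \dots, M-1$, set $r_0 := 0$, $r_M := \infty$, and partition $(0, \infty)$ into the windows $I_j := (r_{j-1}, r_j)$, each of which contains the scale $\lambda_j$ and keeps all other bubbles at distance at least $\eta^{-1/k}$ in the logarithmic sense. I would then replace the hard partition by a smooth partition of unity $\{\chi_j\}$ with transitions in neighborhoods of $r_j$ of size comparable to $r_j$ itself, so that $|r\partial_r \chi_j| \lesssim 1$. Writing $g_j := \chi_j g$ (or working with smooth truncations of $g$ that agree with $g$ on the bulk of $I_j$), one expands $\langle \uD^2 E(\calQ(m,\vec\iota,\vec\lam)) g \mid g \rangle$ as a sum of the pieces $\langle \uD^2 E g_j \mid g_j\rangle$ plus commutator terms involving $\partial_r \chi_j$; the latter are supported near $r = r_j$, where $r \sim \lambda_j^{1/2}\lambda_{j+1}^{1/2}$, and using the pointwise bound on $|r\partial_r \chi_j|$ they are absorbed into a small multiple of $\|g\|_{\calE}^2$.

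Next, on the support of $g_j$ I would replace the potential $\frac{k^2}{r^2}(f'(\calQ(m,\vec\iota,\vec\lam)) - 1)$ by the single-bubble potential $\frac{k^2}{r^2}(f'(Q_{\lam_j}) - 1)$. Since $f' = \cos(2\cdot)$ is insensitive to signs and to integer multiples of $\pi$, the only error comes from the tails of the bubbles at scales $\lam_i$ with $i \neq j$; those tails are $O((r/\lam_i)^k)$ when $r \ll \lam_i$ and $O((\lam_i/r)^k)$ when $r \gg \lam_i$. On $I_j$ both types of contributions are bounded by a power of $\max_i (\lam_i/\lam_{i+1})^k \le \eta^2$, and produce an error $\ll \|g_j\|_{\calE}^2$ after applying Hardy's inequality. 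I would then rescale each $g_j$ to $\lam_j = 1$ and apply Lemma~\ref{l:loc-coerce}, using \eqref{eq:L-loc-R} for the outer edge $r \sim r_j$ and \eqref{eq:L-loc-r} for the inner edge $r \sim r_{j-1}$, to obtain
\[
\langle \uD^2 E g_j \mid g_j \rangle \;\ge\; c\,\|g_j\|_{\calE}^2 \;-\; C\,\langle \calZ_{\U{\lam_j}} \mid g_j \rangle^2.
\]

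Finally, I would convert the local spectral defects $\langle \calZ_{\U{\lam_j}} \mid g_j\rangle^2$ into the given orthogonality conditions $\langle \calZ_{\U{\lam_j}} \mid g\rangle = 0$. The difference $\langle \calZ_{\U{\lam_j}} \mid g - g_j\rangle$ involves $\calZ_{\U{\lam_j}}$ paired against the contributions of $g$ on annuli $I_i$ with $i \neq j$; since $\calZ$ is compactly supported (for $k=1,2$) or has rapid decay $\calZ(\rho) \sim \rho^{-k}$ at infinity and $\calZ(\rho)\sim \rho^k$ at $0$ (for $k\ge 3$), and because the scales are separated by at least $\eta^{-1/k}$, these cross pairings are bounded by $C\eta\|g\|_{\calE}$. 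Summing over $j$ and choosing $\eta$ small enough, the loss terms are absorbed into a fraction of $\sum_j \|g_j\|_{\calE}^2 \gtrsim \|g\|_{\calE}^2$, yielding the claimed $c_0\|g\|_{\calE}^2$ lower bound. The main technical obstacle I expect is the bookkeeping of these error terms, in particular ensuring that the cutoff commutators, the potential-replacement errors, and the mismatch between $\langle \calZ_{\U{\lam_j}} \mid g\rangle$ and $\langle \calZ \mid (g_j)_{\lam_j^{-1}}\rangle$ all scale with a positive power of $\eta$, so that a single smallness condition on $\eta$ simultaneously controls them.
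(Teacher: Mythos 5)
The paper does not actually prove this lemma: it is quoted from \cite[Lemma 2.19]{JL6}, with the remark that it follows from the localized single-bubble coercivity of Lemma~\ref{l:loc-coerce} by an argument analogous to \cite[Proof of Lemma 2.4]{JKL1}. Your proposal is precisely that argument -- annuli cut at the geometric means $\sqrt{\lam_j\lam_{j+1}}$, single-bubble coercivity on each piece via \eqref{eq:L-loc-R}/\eqref{eq:L-loc-r}, potential-replacement and $\calZ$-cross-term errors controlled by powers of $\eta$ through the scale separation (cf.\ Lemma~\ref{lem:cross-term} and Corollary~\ref{cor:ZQ}) -- so the route is the intended one and the overall structure is sound.

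There is, however, one step that does not close as written: the absorption of the cutoff commutators. If the transition regions of the partition of unity have width comparable to $r_j$, then $|r\p_r\chi_j|\lesssim 1$ only yields that the commutator term $\sum_j\int(\p_r\chi_j)^2g^2\,r\,\ud r$ is bounded by a \emph{fixed} ($O(1)$) multiple of the energy of $g$ on the transition annuli, not a small one; since the coercivity constant $c$ coming out of Lemma~\ref{l:loc-coerce} is some fixed number less than $1/2$, an $O(1)$ loss cannot simply be "absorbed into a small multiple of $\|g\|_{\calE}^2$." The standard fix uses exactly the hypothesis \eqref{eq:lam-ratio}: take the cutoffs to transition over the full logarithmic window between consecutive scales, of length $\gtrsim\log(\lam_{j+1}/\lam_j)\gtrsim\log(1/\eta)$, so that $|r\p_r\chi_j|\lesssim 1/\log(1/\eta)$ and the commutator is $o_\eta(1)\|g\|_{\calE}^2$ (alternatively, track the explicit constants $c$ and $1-2c$ in \eqref{eq:L-loc-R}--\eqref{eq:L-loc-r}, which are arranged with enough margin to tolerate the overlap). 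A smaller stylistic point: once $g_j$ is supported in $(r_{j-1},r_j)$, a single application of \eqref{eq:L-loc-R} with $R\sim r_j/\lam_j\gtrsim\eta^{-1/k}$ already gives full coercivity for $g_j$ (the deficient region $r\ge R\lam_j$ carries none of its energy), so you do not in fact need to invoke both localized inequalities on every annulus; the two variants are there so that whichever edge is relevant can be handled. With the commutator point repaired, your argument is correct.
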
 

The following technical lemma is useful when computing interactions between bubbles at different scales. 
\begin{lem}
\label{lem:cross-term}
For any $\lambda \leq \mu$ and $\alpha, \beta > 0$ with $\alpha \neq \beta$ the following bound holds:
\begin{equation}
\int_0^{\infty} \max\Big(1, \frac{r}{\lambda}\Big)^{-\alpha}\max\Big(1, \frac{\mu}{r}\Big)^{-\beta} \frac{\vd r}{r}
\lesssim_{\alpha, \beta} \Big(\frac{\lambda}{\mu}\Big)^{\min(\alpha, \beta)}.
\end{equation}
For any $\alpha > 0$ the following bound holds:
\begin{equation}
\int_0^\infty\max\Big(1, \frac{r}{\lambda}\Big)^{-\alpha}\max\Big(1, \frac{\mu}{r}\Big)^{-\alpha} \frac{\vd r}{r}
\lesssim_{\alpha} \Big(\frac{\lambda}{\mu}\Big)^{\alpha}\Big( 1+ \log\Big(\frac{\mu}{\lambda}\Big)\Big).
\end{equation}
\end{lem}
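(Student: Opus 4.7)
The plan is to split the integral into the three natural regions dictated by the piecewise nature of the integrand: $r \in (0,\lambda]$, $r \in [\lambda, \mu]$, and $r \in [\mu, \infty)$. On each region the two maxima simplify to pure monomials, reducing the problem to elementary integrals of power functions, so no real machinery is required.

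On $(0, \lambda]$ one has $\max(1, r/\lambda) = 1$ and $\max(1, \mu/r) = \mu/r$, so the contribution equals $\mu^{-\beta}\int_0^\lambda r^{\beta - 1}\,\vd r = \beta^{-1}(\lambda/\mu)^\beta$. Symmetrically, on $[\mu, \infty)$ one has $\max(1, r/\lambda) = r/\lambda$ and $\max(1, \mu/r) = 1$, giving $\alpha^{-1}(\lambda/\mu)^\alpha$. Both contributions are bounded by $C_{\alpha,\beta}(\lambda/\mu)^{\min(\alpha, \beta)}$, which is consistent with the claimed estimates in both cases.

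On the middle region $[\lambda, \mu]$ the integrand reduces to $\lambda^\alpha \mu^{-\beta} r^{\beta - \alpha - 1}$. When $\alpha \neq \beta$, integration produces $(\beta - \alpha)^{-1}\bigl[(\lambda/\mu)^\alpha - (\lambda/\mu)^\beta\bigr]$, which since $\lambda \leq \mu$ is bounded in absolute value by a constant depending on $\alpha, \beta$ times $(\lambda/\mu)^{\min(\alpha, \beta)}$. When $\alpha = \beta$, the exponent of $r$ vanishes and the integral evaluates to $(\lambda/\mu)^\alpha \log(\mu/\lambda)$. Summing the three contributions yields the two claimed bounds.

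The only subtlety worth noting is that the constant $(\beta - \alpha)^{-1}$ in the middle region blows up as $\beta \to \alpha$; this is harmless because constants are allowed to depend on $\alpha$ and $\beta$, and the degeneration is precisely mirrored by the appearance of the logarithm $\log(\mu/\lambda)$ in the equal-exponent case. I do not expect any real obstacle: the proof is a three-region case analysis of elementary integrals.
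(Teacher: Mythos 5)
Your proof is correct and follows exactly the approach the paper indicates: splitting the integral over the three regions $0 < r \le \lambda$, $\lambda \le r \le \mu$, $r \ge \mu$ and evaluating the resulting elementary power integrals, with the logarithm arising precisely in the middle region when $\alpha = \beta$. Nothing further is needed.
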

\begin{proof}
This is a straightforward computation, considering separately the regions $0 < r \leq \lambda$, $\lambda \leq r \leq \mu$, and $r \geq \mu$.
\end{proof}

Using the above, along with the formula for $\calZ$ in~\eqref{eq:Z-def} we obtain the following. 
\begin{cor}  \label{cor:ZQ} 
Let $\calZ$ be as in~\eqref{eq:Z-def} and suppose that $\lam, \mu>0$ satisfy $\lam/ \mu \le 1$. Then, 
\EQ{
\ang{ \calZ_{\U \lam} \mid \Lam Q_{\U \mu}}  \lesssim  \begin{cases} (\lam/\mu)^{k+1} \mif k=1, 2 \\ (\lam/ \mu)^{k-1} \mif k \ge 3 \end{cases} ,  \quad \ang{ \calZ_{\U \mu} \mid \Lam Q_{\U \lam}}  \lesssim \begin{cases} 1 \mif k =1 \\  (\lam/\mu)^{k-1} \mif  k \ge 2\end{cases} 
}
\end{cor}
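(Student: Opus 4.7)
The plan is to carry out a direct computation using the explicit formula $\Lam Q(r) = 2kr^k/(1+r^{2k})$, which gives the pointwise asymptotics $\Lam Q(r) \sim r^k$ for $0 < r \le 1$ and $\Lam Q(r) \sim r^{-k}$ for $r \ge 1$.  After unfolding the $L^2$-invariant rescaling $\phi_{\underline{\lam}}(r) = \lam^{-1}\phi(r/\lam)$, each integrand splits naturally at the scales $r = \lam$ and $r = \mu$, and on each subregion it becomes a product of power functions to which Lemma~\ref{lem:cross-term} applies directly (though one may equally well integrate by hand).

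Consider first $\langle \calZ_{\underline{\lam}} \mid \Lam Q_{\underline{\mu}} \rangle$. In the case $k \ge 3$ we have $\calZ = \Lam Q$, so
\[
\langle \Lam Q_{\underline{\lam}} \mid \Lam Q_{\underline{\mu}} \rangle = \int_0^\infty \frac{\Lam Q(r/\lam)\,\Lam Q(r/\mu)}{\lam\mu}\, r\,\vd r,
\]
and a short computation on each of the three subintervals $(0,\lam)$, $(\lam,\mu)$, $(\mu,\infty)$ shows that the middle and outer regions each contribute a term of size $(\lam/\mu)^{k-1}$ (the $(\lam,\mu)$ integral reduces to $\lam^{k-1}\mu^{-k-1}\int_\lam^\mu r\,\vd r$, while the $(\mu,\infty)$ integral uses $\int_\mu^\infty r^{1-2k}\,\vd r$), and the inner region yields the subleading $(\lam/\mu)^{k+1}$.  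In the cases $k = 1, 2$, the cutoff $\chi(r/\lam)$ confines integration to $r \le 2\lam$; since $r \le 2\lam \le 2\mu$, we have $\Lam Q(r/\mu) \sim (r/\mu)^k$ throughout, and the substitution $s = r/\lam$ reduces the integral to $(\lam/\mu)^{k+1}$ times a convergent integral in $s$.

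Next consider $\langle \calZ_{\underline{\mu}} \mid \Lam Q_{\underline{\lam}} \rangle$. For $k \ge 3$ the integrand is symmetric under $\lam \leftrightarrow \mu$, so the bound $(\lam/\mu)^{k-1}$ is inherited from the first case. For $k = 1, 2$, the cutoff $\chi(r/\mu)$ localizes the integration to $r \le 2\mu$, and I would split at $r = \lam$: on $(0,\lam)$ both profiles look like $r^k$, producing a contribution of order $(\lam/\mu)^{k+1}$; on $(\lam, 2\mu)$, $\Lam Q(r/\lam) \sim (\lam/r)^k$ while $\Lam Q(r/\mu) \sim (r/\mu)^k$, and the resulting integral $\lam^{k-1}\mu^{-k-1}\int_\lam^{2\mu} r\,\vd r$ is of order $1$ when $k = 1$ and of order $\lam/\mu$ when $k = 2$, matching the stated bounds.

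Because the estimates reduce to elementary power-law integrals over at most three subregions, I do not anticipate a real obstacle. The only point requiring care is to explain \emph{why} the exponents differ for $k = 1, 2$ versus $k \ge 3$: the cutoff $\chi$ amputates the slowly decaying tail $\sim r^{-k}$ of $\Lam Q$, killing precisely the contribution that in the case $k \ge 3$ produces the dominant $(\lam/\mu)^{k-1}$ term of the first inner product, while leaving the inner behavior $\sim r^k$ that integrates to the different power $(\lam/\mu)^{k+1}$.
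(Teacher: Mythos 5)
Your argument is correct and is essentially the paper's own route: the paper deduces the corollary directly from the pointwise behavior $\Lam Q(s)\simeq \min(s,1/s)^k$, the cutoff in the definition of $\calZ$, and the power-law integral bounds of Lemma~\ref{lem:cross-term}, which is exactly the region-by-region computation you carry out. Your closing remark about the cutoff amputating the $r^{-k}$ tail (and hence the exponent change for $k=1,2$) is the right explanation and consistent with Corollary~\ref{cor:ZQ}.
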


 Lemma~\ref{lem:cross-term} is also used to prove the following lemma from~\cite{JL6} giving leading order terms in an expansion of the nonlinear energy functional about an $M$-bubble configuration. We refer the reader to~\cite{JL6} for the proof. 

\begin{lem}\emph{\cite[Lemma 2.22]{JL6} } \label{lem:M-bub-energy} Fix $k\ge1,  M \in \N$. 
For any $\te>0$, there exists $\eta>0$ with the following property. Consider the subset of $M$-bubble $ \calQ(m,\iota, \vec \lam)$ configurations 
such that 
\EQ{
\sum_{j =1}^{M-1} \Big( \frac{ \lam_{j}}{\lam_{j+1}} \Big)^k \le \eta.
}
Then, 
\EQ{ \label{eq:mb-energy} 
  \Big|  E( \calQ( m, \vec \iota, \vec \lam))  - M E(  Q) -  16 k \pi \sum_{j =1}^{M-1} \iota_j \iota_{j+1}  \Big( \frac{ \lam_{j}}{\lam_{j+1}} \Big)^k  \Big| \le \te \sum_{j =1}^{M-1} \Big( \frac{ \lam_{j}}{\lam_{j+1}} \Big)^k .
}
Moreover, there exists a uniform constant $C>0$ such that for any $g \in H$, 
\EQ{\label{eq:mb-linear}
\abs{\ang{ \uD E( \calQ(m, \vec \iota, \vec \lam)) \mid g} } \le C \| g \|_{\E} \sum_{j =1}^M \Big( \frac{\lam_{j}}{\lam_{j+1}} \Big)^k . 
}
\end{lem}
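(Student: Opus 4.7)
The plan is to expand the energy functional $E(\calQ(m, \vec \iota, \vec \lam))$ directly as $M$ copies of the single-bubble energy plus pairwise interaction integrals, and then apply Lemma~\ref{lem:cross-term} to control each interaction by an appropriate power of the ratios $\lam_j/\lam_{j+1}$. The key principle is that only interactions between consecutive bubbles contribute at leading order, producing the explicit coefficient $16k\pi$, while non-consecutive and higher-order interactions get absorbed into the error.

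Concretely, writing $\calQ - m\pi = \sum_{j=1}^M \phi_j$ with $\phi_j := \iota_j(Q_{\lam_j} - \pi)$, so that $\sin^2(\phi_j) = \sin^2(Q_{\lam_j})$ and $(\partial_r \phi_j)^2 = (\partial_r Q_{\lam_j})^2$, one has
\begin{equation*}
E(\calQ) - M E(Q) = \pi \sum_{i \neq j} \iota_i \iota_j \int_0^\infty \partial_r Q_{\lam_i}\,\partial_r Q_{\lam_j} \, r\, \ud r + \pi \int_0^\infty \frac{k^2}{r^2}\Big(\sin^2\Big(\sum_j \phi_j\Big) - \sum_j \sin^2 Q_{\lam_j}\Big) r\,\ud r.
\end{equation*}
For the potential term, iterated application of $\sin(a+b)=\sin a\cos b + \cos a\sin b$ produces a sum of cross terms of the form $\prod_l h_l(Q_{\lam_{j_l}})$ with $h_l \in \{\sin, \cos\}$ at distinct scales. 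Using the asymptotics $\partial_r Q_\lam,\ \sin Q_\lam /r = O(\min((r/\lam)^{k-1},(\lam/r)^{k+1})/\lam)$ and Lemma~\ref{lem:cross-term} with $\alpha = \beta = k$, each pairwise term is bounded by $(\lam_i/\lam_j)^k$ (with possible log corrections when the exponents coincide). For non-consecutive pairs $i < j-1$, one factors $(\lam_i/\lam_j)^k \leq (\lam_i/\lam_{i+1})^k (\lam_{i+1}/\lam_j)^k \leq \eta\, (\lam_i/\lam_{i+1})^k$, which is absorbed into the error on the right-hand side of \eqref{eq:mb-energy}. For consecutive pairs, using the explicit formula $Q(r)=2\arctan(r^k)$ and substituting $r = \lam_{i+1}s$, the gradient and potential contributions can be expanded as $\lam_i/\lam_{i+1} \to 0$, and collected into the leading term $16k\pi\,\iota_i\iota_{i+1}(\lam_i/\lam_{i+1})^k$; crucially, the logarithmically divergent pieces from the inner integration in each contribution cancel between the Dirichlet and potential parts.

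For the bound \eqref{eq:mb-linear}, observe that $\uD E(\calQ) = -\De \calQ + (k^2/r^2) f(\calQ)$ and that $\De\calQ = \sum_j \iota_j \De Q_{\lam_j} = \sum_j (k^2/r^2)\,\iota_j f(Q_{\lam_j}) = \sum_j (k^2/r^2) f(\phi_j)$ (using that $f$ is odd and $\iota_j = \pm 1$), since each $Q_{\lam_j}$ is a critical point. Therefore
\begin{equation*}
\uD E(\calQ) = \frac{k^2}{r^2}\Big( f\Big(\sum_j \phi_j\Big) - \sum_j f(\phi_j)\Big),
\end{equation*}
and trigonometric expansion of the right-hand side yields a sum of products $\sin Q_{\lam_i}\cos Q_{\lam_j}\cdots$ at mixed scales. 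Pairing with $g \in \E$, bounding $|g|/r \lesssim \|g\|_{\cE}$ away from $0$ and using Hardy's inequality, then applying Lemma~\ref{lem:cross-term} to each cross product gives the claimed bound $\|g\|_{\cE}\sum_j (\lam_j/\lam_{j+1})^k$. The main obstacle is the careful trigonometric bookkeeping for the $M$-fold expansion of $\sin^2(\sum \phi_j)$ and $f(\sum \phi_j)$, together with verifying that the logarithmic factors arising in the borderline cases $k=1,2$ of Lemma~\ref{lem:cross-term} cancel between the gradient and potential interactions so that the leading constant comes out cleanly as $16k\pi$.
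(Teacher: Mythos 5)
Your proposal is correct and is essentially the argument of the cited source: the paper gives no proof of this lemma itself, deferring to \cite[Lemma 2.22]{JL6} and indicating only that Lemma~\ref{lem:cross-term} is the key tool, and the proof there is the expansion you describe, with the coefficient $16k\pi$ produced by the consecutive-bubble interactions and the logarithmic divergences cancelling between the gradient and potential cross terms (indeed $2\p_r Q_\lam \p_r Q_\mu + \tfrac{k^2}{2r^2}\sin 2Q_\lam \sin 2Q_\mu = \tfrac{2k^2}{r^2}\sin Q_\lam \sin Q_\mu\,(1+\cos Q_\lam\cos Q_\mu)$, which removes the log and yields the stated constant). The only point to watch, which your remark on log corrections already anticipates, is that non-consecutive pairs also produce factors of the form $(\lam_i/\lam_j)^k\log(\lam_j/\lam_i)$ through the equal-exponent case of Lemma~\ref{lem:cross-term}; these are nonetheless absorbed into $\te\sum_j(\lam_j/\lam_{j+1})^k$ (and into the uniform constant in \eqref{eq:mb-linear}) since $x\log(1/x)$ is small for small $x$.
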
 

The following (standard) modulation lemma plays an important role and we refer the reader to~\cite[Lemma 2.25]{JL6} for its proof. Before stating it, we define a proximity function to $M$-bubble configurations. Fixing $m, M$ we observe that $\calQ(m, \vec\iota, \vec\lambda; r)$ is an element of $\E_{\ell, m}$, where 
\EQ{ \label{eq:ell-def} 
\ell = \ell(m, M, \vec \iota): = m - \sum_{j=1}^M  \iota_j
}

 \begin{defn} \label{def-d} Fix $m, M$ as in Definition~\ref{def:multi-bubble} and let $ v \in \E_{\ell, m}$ for some $\ell \in \Z$.  Define, 
\EQ{ \label{eq:d-def} 
\bfd(  v) = \bfd_{ m, M}(  v) := \inf_{\vec \iota, \vec \lam}  \Big( \|  v -  \calQ( m, \vec \iota, \vec \lam) \|_{\E}^2 + \sum_{j =1}^{M-1} \Big( \frac{\lam_{j}}{\lam_{j+1}} \Big)^k \Big)^{\frac{1}{2}}.
}
where the infimum is taken over all vectors $\vec \lam = (\lam_1, \dots, \lam_M) \in (0, \infty)^M$ and all $\vec \iota = \{ \iota_1, \dots, \iota_M\} \in \{-1, 1\}^M$ satisfying~\eqref{eq:ell-def}. 
\end{defn}


\begin{lem}[Static modulation lemma] \label{lem:mod-static}\emph{ \cite[Lemma 2.25]{JL6}} Fix $k \ge 1$ and $M \in \N$. 
There exists $\eta \in (0, 1)$, $C>0$ with the following properties.  
Let  $m$ be as in Definition~\ref{def:multi-bubble} and $\bfd_{m, M}$ as in Definition~\ref{def-d}. Let $\te>0$, $\ell \in \Z$,  and let $ v \in  \calE_{\ell,  m}$   be such that 
\EQ{ \label{eq:v-M-bub} 
\bfd_{ m, M}(  v)  \le \eta, \mand E(  v) \le ME(  Q) + \te^2, 
}
Then, there exists a unique choice of $\vec \lam = ( \lam_1, \dots, \lam_M) \in  (0, \infty)^M$, $\vec\iota \in \{-1, 1\}^M$, and $g \in  H$, such that  
\EQ{ \label{eq:v-decomp} 
    v &=   \calQ( m, \vec \iota, \vec \lam) +  g, \\
   0 & = \La \calZ_{\U{\lam_j}} \mid g\Ra , \quad \forall j = 1, \dots, M,
   }
   along with the estimates, 
\EQ{  \label{eq:g-bound-0}
\bfd_{ m, M}(  v)^2 &\le \|   g \|_{\E}^2  + \sum_{j =1}^{M-1} \Big( \frac{\lam_{j}}{\lam_{j+1}} \Big)^k  \le C \bfd_{ m, M}(  v)^2,
}
and, 
\EQ{\label{eq:g-bound-A} 
   \|   g \|_{\E}^2 + \sum_{j \not \in \calA}   \Big( \frac{ \lam_j}{ \lam_{j+1} }\Big)^k & \le C  \max_{ j \in \calA} \Big( \frac{ \lam_j}{ \lam_{j+1} }\Big)^k + \te^2, 
}
where $\calA  := \{ j \in \{ 1, \dots, M-1\} \, : \, \iota_j  \neq \iota_{j+1} \}$. 
\end{lem}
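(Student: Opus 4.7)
The plan is to apply the implicit function theorem to the orthogonality conditions, initialized at a near-minimizer of $\bfd_{m,M}(v)$, and then to upgrade the resulting crude comparability estimate via an energy expansion about the multi-bubble. First, I select $(\vec\iota_0, \vec\lam_0) \in \{-1,1\}^M \times (0,\infty)^M$ that nearly achieves the infimum in \eqref{eq:d-def}, so that
\[
\|v - \calQ(m, \vec\iota_0, \vec\lam_0)\|_\E^2 + \sum_{j=1}^{M-1}\Big(\frac{\lam_{0,j}}{\lam_{0,j+1}}\Big)^k \le 2\,\bfd_{m,M}(v)^2.
\]
For $\eta$ sufficiently small, the sign vector $\vec\iota_0$ is uniquely forced by $v$: two multi-bubble configurations with different signs must differ by an $O(1)$ amount in $L^\infty$ in some intermediate scale region, which is incompatible with both being within $O(\eta)$ of $v$ in $\E$ via the embedding $\|\cdot\|_{L^\infty} \lesssim \|\cdot\|_\E$. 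I therefore fix $\vec\iota := \vec\iota_0$ and search for $\vec\lam$ near $\vec\lam_0$.

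Consider the map $F : (0,\infty)^M \to \bR^M$ with components
\[
F_j(\vec\lam) := \La \calZ_{\U{\lam_j}} \mid v - \calQ(m, \vec\iota, \vec\lam) \Ra.
\]
Differentiating in $\log\lam_i$ at $v = \calQ(m, \vec\iota, \vec\lam_0)$ and $\vec\lam = \vec\lam_0$ yields a matrix whose diagonal entries equal $\pm \La \calZ \mid \Lam Q \Ra$ (nonzero by \eqref{eq:ZQ}) after rescaling, and whose off-diagonal entries, being pairings of $\calZ_{\U{\lam_{0,j}}}$ with rescaled copies of $\Lam Q$ at distant scales $\lam_{0,i}$, decay as a positive power of $\lam_{0,\min(i,j)}/\lam_{0,\max(i,j)}$ by Corollary \ref{cor:ZQ}. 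The smallness hypothesis $\sum(\lam_{0,j}/\lam_{0,j+1})^k \le 2\eta^2$ then makes the Jacobian diagonally dominant with uniformly bounded inverse. A quantitative implicit function theorem delivers the unique $\vec\lam$ near $\vec\lam_0$ with $F(\vec\lam) = 0$ and scale displacements $|\log(\lam_j/\lam_{0,j})| \lesssim \bfd_{m,M}(v)$. Setting $g := v - \calQ(m, \vec\iota, \vec\lam)$ furnishes the decomposition \eqref{eq:v-decomp}, and \eqref{eq:g-bound-0} follows from continuity of the parametrization combined with the near-minimality of $(\vec\iota_0, \vec\lam_0)$.

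For the refined bound \eqref{eq:g-bound-A}, I Taylor-expand $E(v)$ about $\calQ := \calQ(m, \vec\iota, \vec\lam)$:
\[
E(v) = E(\calQ) + \La \uD E(\calQ) \mid g \Ra + \tfrac{1}{2}\La \uD^2 E(\calQ) g \mid g \Ra + O(\|g\|_\E^3),
\]
where $\|g\|_\infty \lesssim \|g\|_\E$ is used for the cubic remainder. Lemma \ref{lem:M-bub-energy} gives
\[
E(\calQ) - M E(Q) = 16 k \pi \sum_{j=1}^{M-1}\iota_j \iota_{j+1}\Big(\frac{\lam_j}{\lam_{j+1}}\Big)^k + O\Big(\theta' \sum_{j=1}^{M-1}(\lam_j/\lam_{j+1})^k\Big)
\]
for any preassigned small $\theta'>0$; each summand is $-16k\pi(\lam_j/\lam_{j+1})^k$ for $j \in \calA$ but $+16k\pi(\lam_j/\lam_{j+1})^k$ for $j \notin \calA$. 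The linear term is $O(\|g\|_\E \sum (\lam_j/\lam_{j+1})^k)$ by \eqref{eq:mb-linear}, and the orthogonality conditions $\La \calZ_{\U{\lam_j}} \mid g \Ra = 0$ built into the decomposition make Lemma \ref{lem:D2E-coerce} applicable, yielding $\La \uD^2 E(\calQ) g \mid g \Ra \ge c_0 \|g\|_\E^2$. Feeding these into the hypothesis $E(v) \le M E(Q) + \theta^2$, moving the $j \in \calA$ contributions to the right-hand side, and absorbing the linear and cubic cross-terms by Young's inequality (using \eqref{eq:g-bound-0} for smallness) produces \eqref{eq:g-bound-A}.

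The main obstacle is this last refinement. It depends critically on (i) the exact positive coefficient $16k\pi$ in the leading interaction term of $E(\calQ)$, so that the $j \notin \calA$ ratios enter with a \emph{favorable} sign transferable to the left-hand side, (ii) the coercivity of $\uD^2 E(\calQ)$ on the orthogonal complement of the scaling directions, which dictates the precise choice \eqref{eq:Z-def} of $\calZ$ and the imposed orthogonality conditions, and (iii) careful bookkeeping of the linear cross-term and cubic remainder, whose absorption consumes the smallness budgets in $\eta$ and $\theta$.
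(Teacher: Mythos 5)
The paper does not actually prove this lemma — it is imported verbatim from \cite[Lemma 2.25]{JL6} — and your argument follows the same standard route as that source: an implicit function theorem applied to the orthogonality conditions initialized at a near-optimal configuration (with diagonal dominance of the Jacobian coming from $\la \calZ \mid \Lam Q\ra \neq 0$ and Corollary~\ref{cor:ZQ}), followed by the energy expansion combining Lemma~\ref{lem:M-bub-energy}, the linear bound~\eqref{eq:mb-linear}, and the coercivity of Lemma~\ref{lem:D2E-coerce} to extract~\eqref{eq:g-bound-A}. The only steps you assert rather than prove — uniqueness of the sign vector and the upgrade from local IFT uniqueness to the uniqueness claimed in the statement — are precisely what Lemma~\ref{lem:bub-config} supplies, so the outline is sound as written.
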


We also use of the following lemma proved from~\cite{JL6} which says that a finite energy map cannot be close to two distinct multi-bubble configurations. 

\begin{lem}  \label{lem:bub-config} \emph{\cite[Lemma 2.27]{JL6}} Let $k \ge 1$. 
 There exists $\eta>0$ sufficiently small with the following property. Let $m, \ell \in \Z$, $M, L \in \N$,  $\vec\iota \in \{-1, 1\}^M, \vec \sigma \in \{-1, 1\}^L$, $\vec \lam \in (0, \infty)^M, \vec \mu \in (0, \infty)^L$,   and $w$ be such that $E_{\bfp}( w) < \infty$ and, 
 \begin{align} 
 \|w  - \calQ(m,  \vec \iota, \vec \lam)\|_{\E}^2  + \sum_{j =1}^{M-1} \Big(\frac{\lam_j}{\lam_{j+1}} \Big)^{k} &\le \eta,  \label{eq:M-bub} \\
 \|w  - \calQ(\ell, \vec \sigma , \vec \mu)\|_{\E}^2 +  \sum_{j =1}^{L-1} \Big(\frac{\mu_j}{\mu_{j+1}} \Big)^{k} &\le \eta. \label{eq:L-bub} 
 \end{align} 
 Then, $m = \ell$, $M = L$, $\vec \iota = \vec \sigma$. Moreover, for every $\te>0$ the number $\eta>0$ above can be chosen small enough so that 
 \EQ{ \label{eq:lam-mu-close} 
\max_{j = 1, \dots M} | \frac{\lam_j}{\mu_j} - 1 | \le  \te.
 }
\end{lem}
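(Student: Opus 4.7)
The plan is to argue by contradiction along a sequence $\eta_n \to 0$ with $w_n$ and two configurations $\calQ(m_n, \vec\iota_n, \vec\lam_n)$, $\calQ(\ell_n, \vec\sigma_n, \vec\mu_n)$ satisfying~\eqref{eq:M-bub}--\eqref{eq:L-bub} with $\eta = \eta_n$ but violating some part of the conclusion. Since $m, \ell, M, L, \vec\iota, \vec\sigma$ take values in discrete sets (with $M, L$ fixed), after extraction we may assume they are constant in $n$; only the scale vectors $\vec\lam_n, \vec\mu_n$ still depend on $n$. The triangle inequality then gives $\|\calQ(m,\vec\iota,\vec\lam_n) - \calQ(\ell,\vec\sigma,\vec\mu_n)\|_{\E} \le 2\sqrt{\eta_n}$. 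Finiteness of the $\E$-norm forces a function to tend to $0$ at $r=0$ and $r=\infty$ (the $k^2 g^2/r^2$ integrand produces a logarithmic divergence otherwise), so matching the two configurations' asymptotic values at infinity gives $m = \ell$, and matching their inner limits gives $\sum \iota_j = \sum \sigma_j$. Applying Lemma~\ref{lem:M-bub-energy} to both decompositions yields $M E(Q) + o(1) = E(w_n) + o(1) = L E(Q) + o(1)$, hence $M = L$ once $\eta_n$ is small.

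To establish $\vec\iota = \vec\sigma$ together with the scale closeness~\eqref{eq:lam-mu-close}, I would induct on $M$. For $M \ge 1$, assume WLOG $\lam_{1,n} \le \mu_{1,n}$ and, after further extraction, $\mu_{1,n}/\lam_{1,n} \to c \in [1, \infty]$. Rescale by setting $\wt w_n(r) := w_n(\lam_{1,n} r)$. On any fixed compact annulus $r \in [R^{-1}, R]$, the rescaled $\lam$-configuration
\EQ{
m\pi + \iota_1 \bigl( Q(r) - \pi\bigr) + \sum_{j \ge 2}\iota_j\bigl(Q_{\lam_{j,n}/\lam_{1,n}}(r) - \pi \bigr)
}
converges locally in $\E$ to the profile $c_*\pi + \iota_1(Q(r) - \pi)$, where $c_* := m - \sum_{j \ge 2}\iota_j$, because each outer bubble $\iota_j(Q_{\lam_{j,n}/\lam_{1,n}} - \pi)$ flattens to the constant $-\iota_j \pi$ on $[R^{-1}, R]$ as $\lam_{j,n}/\lam_{1,n} \to \infty$. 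Running the same analysis for the $\sigma$-configuration: if $c = \infty$, then \emph{every} rescaled $\sigma$-bubble collapses to a constant, forcing the limit of $\wt w_n$ to be constant and contradicting the non-constant term $\iota_1 Q$ coming from the $\lam$-side. Hence $c \in [1, \infty)$, and matching the non-constant parts of the two limits gives $\iota_1 Q(r) = \sigma_1 Q(r/c)$; strict monotonicity of $Q$ forces $\iota_1 = \sigma_1$ and $c = 1$. To close the induction, subtract $\iota_1(Q_{\lam_{1,n}} - \pi)$ from $w_n$: the scale closeness $\lam_{1,n}/\mu_{1,n} \to 1$ implies that the innermost bubbles of the two decompositions differ by $o(1)$ in $\E$, so the residue is close to both $(M-1)$-bubble configurations, and the inductive hypothesis applies.

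The main obstacle is the rescaling step. Converting the global $\E$-smallness of $w_n - \calQ(\ldots)$ into locally uniform closeness of the rescaled profiles on a fixed annulus requires exploiting the scale separation $\lam_{j,n}/\lam_{1,n} \to \infty$ together with the embedding $\|g\|_{L^\infty} \lesssim \|g\|_{\E}$ and Lemma~\ref{lem:cross-term}-type interaction estimates, all while tracking the constant shifts $\pm \pi$ produced by the flattened outer bubbles. A secondary subtlety is ensuring that the residue after peeling off the innermost bubble genuinely inherits a quantitative smallness at the $(M-1)$-bubble level, which is precisely what the step $\lam_{1,n}/\mu_{1,n} \to 1$ provides.
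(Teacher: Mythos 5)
This lemma is quoted from \cite[Lemma 2.27]{JL6}; the present paper contains no proof of it to compare against, so your argument can only be judged on its own terms. Your route --- contradiction along $\eta_n \to 0$, identification of $m=\ell$ from the boundary values of an $\E$-function at $r=0,\infty$, $M=L$ from the energy expansion, and then an induction that rescales to the innermost scale, identifies the two innermost bubbles (sign and asymptotic scale) by comparing local limits on annuli, and peels them off --- is sound, and the step you flag as the main obstacle (locally uniform convergence on fixed annuli) is routine given the scale separation $\lam_{j,n}/\lam_{1,n}\to\infty$ and the decay $Q(r)=O(r^k)$, $\pi - Q(r)=O(r^{-k})$, with Lemma~\ref{lem:cross-term} controlling the cross terms. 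The peeling step also closes correctly: once $\iota_1=\sigma_1$ and $\lam_{1,n}/\mu_{1,n}\to 1$, one has $\|Q_{\lam_{1,n}}-Q_{\mu_{1,n}}\|_{\E}\to 0$, so the residue is $o(1)$-close to both truncated configurations and the sequential form of the inductive hypothesis applies. This is a legitimate, if softer, alternative to the direct quantitative comparison in \cite{JL6}.

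One genuine caveat concerns your extraction step, where you pass to a subsequence on which $m,\ell,M,L,\vec\iota,\vec\sigma$ are constant "(with $M,L$ fixed)". The lemma as stated quantifies over \emph{all} $M,L\in\N$ with an $\eta$ uniform in them, and nothing in the hypotheses bounds $E(w_n)$, hence $M_n,L_n$, along a contradiction sequence; moreover your energy comparison $E(w_n)=M E(Q)+o(1)$ hides an error of size $\|w_n-\calQ\|_{\E}\,E(\calQ)^{1/2}\sim\sqrt{\eta_n M_n}\,$, which is small compared to $E(Q)$ only if $\eta_n M_n\to 0$. So, as written, your argument proves the statement with $\eta$ allowed to depend on $\max(M,L)$ --- which is all this paper ever needs, since the lemma is invoked with $M,L\le N$ and constants are permitted to depend on the number of bubbles $N$ --- but not the literal uniform-in-$M,L$ claim. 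Either state that dependence explicitly, or replace the soft compactness step by a quantitative scale-by-scale argument if uniformity is wanted.
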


\section{Localized sequential bubbling} 
We define a localized distance function 
\EQ{ \label{eq:delta-def} 
\bs\delta_{R}(u) := \inf_{m, M, \vec \iota, \vec \lam} \Big( \| u - \calQ(m, \vec \iota, \vec \lam) \|_{\E(r \le R)}^2 + \sum_{j =1}^M \Big( \frac{\lam_j}{\lam_{j+1}} \Big)^k \Big)^{\frac{1}{2}}
}
where the infimum is taken over all $m \in \Z$, $M \in \{0, 1, 2, \dots\}$, all vectors $\iota \in\{-1, 1\}^M$, $\vec \lam \in (0, \infty)^M$, and  we use the convention that the last scale $\lam_{M+1} = R$. 
%

\begin{lem} \label{lem:compact} 
Let $\ell, m \in \Z$ and let $u_n \in \E_{\ell, m}$ be a sequence of maps with $\limsup_{n \to \infty} E( u_n) < \infty$. Let $\rho_n \in (0, \infty)$ be a sequence and suppose that 
\EQ{ \label{eq:no-tension} 
\lim_{n \to \infty} (\rho_n \| \calT(u_n) \|_{L^2}) = 0.
}
Then, there exists a sequence $R_n \to \infty$ so that, up to passing to a subsequence of the $u_n$, we have, 
\EQ{
\lim_{n \to \infty} \bs \de_{R_n\rho_n}( u_n)  = 0.
}
The subsequence of the $u_n$ can be chosen so that there are fixed  $(M, m, \vec \iota) \in  \N \cup\{0\} \times  \Z \times \{-1, 1\}^M$, a sequence  $\vec \lam_n \in (0, \infty)^M$, and $C_0 >0$ with 
\EQ{
\lim_{n \to \infty} \Big( \| u_n - \calQ(m, \vec \iota, \vec \lam_n) \|_{\E(r \le  R_n \rho_n)}^2 + \sum_{j =1}^{M-1} \Big( \frac{\lam_{n,j}}{\lam_{n, j+1}} \Big)^k\Big) = 0,  
}
and, 
\EQ{
\lam_{n ,M} \le C_0\rho_n, \quad \forall\,  n. 
} 
\end{lem}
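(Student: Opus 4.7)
\medskip

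\noindent\textbf{Proof plan.} Since $\|\calT(u_{\lambda})\|_{L^2} = \lambda^{-1}\|\calT(u)\|_{L^2}$ and the energy is scale-invariant, replacing $u_n(r)$ with $u_n(\rho_n r)$ reduces matters to the case $\rho_n = 1$, $\|\calT(u_n)\|_{L^2}\to 0$, in which we seek a multi-bubble with largest scale uniformly bounded, approximating $u_n$ on $r\le R_n$ for some $R_n\to\infty$. Apply the linear profile decomposition of Lemma~\ref{lem:pd} to $u_n$: this produces nontrivial-class profiles $\psi^j\in\E_{\ell_j,m_j}$ at scales $\lambda_{n,j}$, trivial-class profiles $v^i\in\E_{0,0}$ at scales $\sigma_{n,i}$, and a remainder $w_n^J$ with $\limsup_n\|w_n^J\|_{L^\infty}\to 0$ as $J\to\infty$.

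Classify each profile as \emph{inner} (scale bounded in $n$) or \emph{outer} (scale tending to $\infty$). For an inner $\psi^j$, the weak convergence $u_n(\lambda_{n,j}\cdot)\rightharpoonup\psi^j$ in $\E$ and Rellich compactness on bounded annuli allow one to pass the weak form of the tension to the limit, $\langle\calT(u_n(\lambda_{n,j}\cdot)),\phi\rangle\to\langle\calT(\psi^j),\phi\rangle$ for all $\phi\in C_c^\infty((0,\infty))$; on the other hand Cauchy--Schwarz and the scaling of tension bound the same quantity by $\lambda_{n,j}\|\calT(u_n)\|_{L^2}\|\phi\|_{L^2}\to 0$. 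Hence $\calT(\psi^j)=0$ weakly, and the classification of finite-energy equivariant harmonic maps forces $\psi^j = m_j\pi + \iota_j(Q_{\mu_j}-\pi)$. The identical argument shows each inner $v^i$ is a harmonic map in $\E_{0,0}$, hence $v^i\equiv 0$ (the only equivariant harmonic map with trivial boundary data). For outer profiles, the change of variables $s=r/\lambda_{n,j}$ gives
\[
\|\psi^j(\cdot/\lambda_{n,j})-\ell_j\pi\|_{\E(r\le R_n)}^2 = \|\psi^j-\ell_j\pi\|_{\E(0,\,R_n/\lambda_{n,j})}^2 \to 0
\]
as soon as $R_n/\lambda_{n,j}\to 0$, and analogously for outer $v^i$; outer profiles therefore degenerate on the inner region to constant shifts which are absorbed into the base integer $m$. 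Setting $\tilde\lambda_{n,j} = \mu_j\lambda_{n,j}$ for each inner bubble and reordering increasingly produces the scale vector $\vec\lambda_n$; the scale orthogonality of Lemma~\ref{lem:pd} is preserved since the $\mu_j$ are fixed, yielding $\tilde\lambda_{n,j}/\tilde\lambda_{n,j+1}\to 0$.

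The main obstacle is showing that the remainder vanishes on the inner region in $\E$-norm, i.e., $\|w_n^J\|_{\E(r\le R_n)}\to 0$ after a suitable diagonal extraction $J=J_n\to\infty$. The Pythagorean identity only bounds $E(w_n^J)$ uniformly, and $\|w_n^J\|_{L^\infty}\to 0$ alone does not preclude energy concentration on the annular neck regions between consecutive bubble scales. The key tool, in the spirit of Jia--Kenig~\cite{JK}, is a localized virial (Pohozaev-type) identity: testing the approximate equation $\calT(u_n)=o_{L^2}(1)$ against a multiplier of the form $\chi_n(r)\Lambda u_n$, with $\chi_n$ a smooth cutoff localized to a neck annulus between $\tilde\lambda_{n,j}$ and $\tilde\lambda_{n,j+1}$, produces an identity whose main term is the neck energy and whose error terms are controlled by $\|\calT(u_n)\|_{L^2}\|\Lambda u_n\|_{L^2}$, by bubble--bubble interaction tails (estimated via Lemma~\ref{lem:cross-term}), and by boundary values of $u_n$ at the edges of the cutoff (where $\|w_n^J\|_{L^\infty}\to 0$ fixes the correct integer constant via Lemma~\ref{lem:pi}). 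All error terms tend to zero as $n,J\to\infty$, and a diagonal extraction with $R_n\to\infty$ chosen slowly enough yields $\|u_n-\calQ(m,\vec\iota,\vec\lambda_n)\|_{\E(r\le R_n)}\to 0$; combined with the scale-ratio orthogonality this gives $\bs\delta_{R_n}(u_n)\to 0$, concluding the proof.
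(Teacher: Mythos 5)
Your overall architecture matches the paper's: rescale to $\rho_n=1$, run the profile decomposition of Lemma~\ref{lem:pd}, show via the vanishing of $\lambda_{n,j}\|\calT(u_n)\|_{L^2}$ that every bounded-scale profile is a weak (hence smooth) equivariant harmonic map, so $\psi^j=m_j\pi+\iota_j(Q_{\mu_j}-\pi)$ and every bounded-scale $v^i\in\E_{0,0}$ vanishes, discard the profiles whose scales go to infinity by choosing $R_n$ growing slowly, and reduce everything to showing the error carries no energy on the neck regions. Up to that point the argument is sound and is essentially the paper's Steps 1--2.

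The gap is in your treatment of the necks. Testing $\calT(u_n)$ against $\chi_n(r)\Lambda u_n$ is the Pohozaev/scaling multiplier, and the identity it produces is (up to the $o(1)$ error you correctly estimate)
\EQ{
\La \calT(u_n)\mid \chi_n\,\Lambda u_n\Ra \;=\; -\frac12\int_0^\infty \chi_n'(r)\Big[(r\p_r u_n)^2-k^2\sin^2 u_n\Big]\,\ud r ,
}
whose principal term is the \emph{difference} of the kinetic and potential energy densities weighted by $\chi_n'$. This is sign-indefinite: it vanishes identically on every $k$-equivariant harmonic map (since $r\p_r Q=k\sin Q$), but it also vanishes, or can have either sign, on neck configurations carrying order-one energy, so it cannot serve as "an identity whose main term is the neck energy." The coercivity that kills the neck energy in the equivariant setting comes from a different multiplier: one tests against $\sin(2u_n)$ (suitably cut off), which after integration by parts yields the Jia--Kenig functional
\EQ{
\int_0^\infty \Big( k^2\frac{\sin^2(2u_n)}{2r^2} + 2\cos(2u_n)\,(\p_r u_n)^2\Big)\, r\,\ud r ,
}
and the point (Lemma~\ref{lem:jk} and the remark following it) is that this quantity vanishes on each bubble but is coercive, comparable to the local energy, exactly in the regions where $u_n$ is close to an integer multiple of $\pi$, i.e.\ on the necks; its smallness follows from Cauchy--Schwarz, $\|\sin(2u_n)\chi_{\ti R_n}\|_{L^2}\lesssim \ti R_n$ and $\ti R_n\|\calT(u_n)\|_{L^2}\to 0$. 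With your multiplier the corresponding main term has no such coercivity, so the step "all error terms tend to zero, hence the neck energy vanishes" does not follow. A secondary omission: even with the correct functional, one needs the preparatory facts used to invoke Lemma~\ref{lem:jk} -- the $L^\infty$-smallness of the error, an exterior truncation at a scale of vanishing energy (pigeonholing plus Lemma~\ref{lem:pi}), and the property that the error carries no energy at any fixed bounded scale (quoted in the paper from C\^ote and Jia--Kenig) -- which your sketch passes over; but the decisive issue is the choice of multiplier.
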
 

\begin{rem} 
Lemma~\ref{lem:compact} is proved in the general (non-equivariant) setting by Qing~\cite{Qing}. Here we give a different (but related) treatment adapted to the equivariant setting using explicitly the notion of a profile decomposition of G\'erard~\cite{Gerard}. The proof that no energy can accumulate in the ``neck'' regions between the bubbles can be simplified in the equivariant setting and here we use an argument due to Jia and Kenig~\cite{JK} from their proof of an analogous result for equivariant wave maps; see Lemma~\ref{lem:jk} below. 
\end{rem} 

\begin{lem}
\label{lem:sequences}
If $a_{k, n}$ are positive numbers such that $\lim_{n\to \infty}a_{k, n} = \infty$ for all $k \in \bN$,
then there exists a sequence of positive numbers $b_n$ such that $\lim_{n\to \infty} b_n = \infty$
and $\lim_{n\to \infty} a_{k, n} / b_n = \infty$ for all $k \in \bN$.
\end{lem}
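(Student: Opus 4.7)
The plan is to build $b_n$ by a diagonal construction, so that $b_n$ grows slowly enough to be dominated by every single sequence $a_{k,n}$, yet still diverges. The key idea is: if we fix a ``schedule'' $K(n)$ that tends to $\infty$ very slowly, and ensure that by time $n$ the first $K(n)$ sequences have all grown past, say, $K(n)^2$, then setting $b_n = K(n)$ will do the job.

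More concretely, I would proceed as follows. Using the hypothesis that $a_{k,n} \to \infty$ as $n \to \infty$ for every fixed $k$, I choose by induction on $K \in \bN$ integers $N_1 < N_2 < N_3 < \cdots$ with the property that
\begin{equation}
a_{k,n} \ge K^2 \quad \text{for all } n \ge N_K \text{ and all } 1 \le k \le K.
\end{equation}
Such $N_K$ exists because for each of the finitely many values $k = 1, \ldots, K$ the sequence $a_{k,n}$ eventually exceeds $K^2$, and I may take the max of the corresponding thresholds (and enlarge it to guarantee $N_K > N_{K-1}$). Then I define
\begin{equation}
b_n := \max \{ K \in \bN : N_K \le n \},
\end{equation}
with the convention $b_n := 1$ for $n < N_1$.

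It remains to verify the two claimed properties. First, $b_n \to \infty$: since $N_K < \infty$ for every $K$, eventually $n \ge N_K$, hence $b_n \ge K$ for all large $n$. Second, fix $k \in \bN$; for any $n$ with $b_n = K \ge k$, the definition of $N_K$ and the inequality $k \le K$ give $a_{k,n} \ge K^2 = b_n^2$, hence $a_{k,n}/b_n \ge b_n \to \infty$. Since $b_n \to \infty$, the condition $b_n \ge k$ holds for all sufficiently large $n$, so $a_{k,n}/b_n \to \infty$ as required. There is no real obstacle here; the only mild point is to interleave the thresholds $N_K$ so that they depend on all of $a_{1,n}, \dots, a_{K,n}$ simultaneously, which is handled by taking a maximum over finitely many indices.
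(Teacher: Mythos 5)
Your proof is correct and follows essentially the same diagonal argument as the paper: the paper takes $\tilde a_{K,n}=\min_{k\le K}a_{k,n}$ and thresholds $n_K$ with $\tilde a_{K,n}\ge K^2$ for $n\ge n_K$, then sets $b_n$ by $n_{b_n}\le n<n_{b_n+1}$, which is exactly your $N_K$ and $b_n=\max\{K: N_K\le n\}$. The verification $a_{k,n}\ge b_n^2\gg b_n$ is also identical.
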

\begin{proof}
For each $k$ and each $n$ define $\ti a_{k, n} = \min\{ a_{1, n}, \dots, a_{k, n}\}$. Then the sequences $\ti a_{k, n} \to \infty$ as $n \to \infty$ for each $k$, but also satisfy $\ti a_{k, n} \le a_{k, n}$ for each $k, n$, as well as $\ti a_{j, n} \le \ti a_{k, n}$ if $j>k$. Next, choose a strictly increasing sequence $\{n_k \}_k \subset \N$ such that $\ti a_{k, n} \ge k^2$ as long as $n \ge n_k$. For $n$ large enough, let $b_n \in \bN$ be determined by the condition
$n_{b_n} \leq n < n_{b_n + 1}$. Observe that $b_n \to \infty$ as $n \to \infty$. Now fix any $\ell \in \N$ and let $n$ be such that $b_n > \ell$. We then have
\begin{equation}
a_{\ell, n} \geq \ti a_{\ell, n} \ge \ti a_{b_n, n} \ge  b_n^2  \gg b_n.
\end{equation}
Thus the sequence $b_n$ has the desired properties. 
\end{proof}

The proof of the Lemma~\ref{lem:compact} consists of several steps, which are designed to reduce the proof to a scenario already considered by C\^ote in~\cite[Proof of Lemma 3.5]{Cote15} and then by Jia-Kenig in~\cite[Proof of Theorem 3.2]{JK}, albeit in a different context. In particular, we will seek to apply the following result from~\cite{JK}.  

\begin{lem} \emph{\cite[Theorem 3.2]{JK}} \label{lem:jk} 
Let $v_n$ be a sequence of maps such that $\limsup_{n \to \infty} E( v_n) < \infty$. Suppose that there exists a sequence an integer $M \ge 0$  and scales $\lam_{n, 1} \ll  \dots \ll \lam_{n, M} \lesssim 1$ such that 
\EQ{
{v}_n &= m_1  \pi + \sum_{j =1}^{M} \iota_j(   Q \big(  \frac{ \cdot}{\lam_{n, j}} \big)   -  \pi)  + {w}_{n, 0}, 
}
where $\|{w}_{n} \|_{L^\infty} \to 0$ and $\| { w}_{n} \|_{\E(r \ge  r_n^{-1})} \to 0$ as $n \to \infty$ for some sequence $r_{n} \to \infty$. Suppose in addition that,   $\| {w}_{n} \|_{\E(A^{-1} \lam_n \le r \le A \lam_n)} \to 0$ as $n \to \infty$ for any sequence $\lam_n \lesssim 1$ and any $A>1$, 
and finally, that 
\EQ{ \label{eq:jk0} 
\limsup_{n\to \infty}\int_0^\infty \bigg( k^2 \frac{\sin^2(2 v_n)}{2r^2} + (\p_r v_n)^2 2 \cos (2 v_n)  \bigg) \,r \,  \ud r    \le 0 . 
}
Then, 
\EQ{
\| {w}_{n} \|_{\E} \to 0 \mas n \to \infty. 
}
\end{lem}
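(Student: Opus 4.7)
The plan is to expand the virial-type functional
\begin{equation}
J(v) := \int_0^\infty\bigg(k^2 \frac{\sin^2(2v)}{2r^2} + 2(\p_r v)^2 \cos(2v)\bigg)\,r\,\ud r
\end{equation}
around the multi-bubble configuration $\calQ_n := m_1\pi + \sum_{j=1}^M\iota_j(Q_{\lambda_{n,j}} - \pi)$, write $v_n = \calQ_n + w_n$, and extract from the hypothesis $\limsup_{n\to\infty} J(v_n)\leq 0$ a coercive lower bound of the form $J(v_n) \geq 2\|w_n\|_{\E}^2 - o(1) - o(\|w_n\|_{\E}^2)$, which will force $\|w_n\|_{\E} \to 0$.

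The first step will be the vanishing identity $J(\pm Q_\lambda + c\pi) = 0$ for every $\lambda > 0$, $c \in \Z$, and sign $\pm$. Using the Bogomolny identity $r\p_r Q = k\sin Q$, so that $(\p_r Q)^2 = k^2\sin^2 Q/r^2$, the integrand at $Q$ reduces to $\frac{2k^2\sin^2 Q}{r^2}(2 - 3\sin^2 Q)$; the substitution $u = Q(r)$ with $\ud r/r = \ud u/(k\sin u)$ gives $J(Q) = 2k\int_0^\pi(2\sin u - 3\sin^3 u)\,\ud u = 0$, and parity together with $2\pi$-periodicity handle the sign- and $\pi$-shift invariances. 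For the multi-bubble, I would introduce a smooth partition of unity subordinate to the scales $\lambda_{n,j}$, with transitions at scales $a_{n,j}\in(\lambda_{n,j},\lambda_{n,j+1})$ chosen by pigeonhole in the neck regions where $\|w_n\|_{\E(A^{-1}a_{n,j}, A a_{n,j})}\to 0$. On the piece near $\lambda_{n,j}$ the other bubbles contribute $\calQ_n - (c_j\pi + \iota_j Q_{\lambda_{n,j}}) = o(1)$ in energy by scale separation, and the error in $J(\calQ_n)$ is controlled via Lemma~\ref{lem:cross-term}; in each neck and in the outer region $\calQ_n$ is near a multiple of $\pi$ and $\p_r\calQ_n$ is small, so the contribution to $J(\calQ_n)$ is $o(1)$. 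Assembling, $J(\calQ_n) = o(1)$.

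Next will come the Taylor expansion $J(v_n) = J(\calQ_n) + L_n(w_n) + Q_n(w_n) + R_n(w_n)$, with $L_n$ linear in $w_n$, $Q_n$ quadratic, and $R_n = O(\|w_n\|_{L^\infty}\|w_n\|_\E^2) = o(\|w_n\|_\E^2)$ cubic. Using $\sin^2(2Q+2w) = \sin^2(2Q) + 2w\sin(4Q) + 4w^2\cos(4Q) + O(w^3)$, $\cos(2Q+2w) = \cos(2Q) - 2w\sin(2Q) - 2w^2\cos(2Q) + O(w^3)$, and $(\p_r Q)^2 = k^2\sin^2 Q/r^2$, a direct calculation would produce
\begin{equation}
Q_n(w_n) = 2\|w_n\|_\E^2 + \sum_{j=1}^M S_{n,j}(w_n),
\end{equation}
where each $S_{n,j}$ is an integral whose integrand contains a factor of $\sin^2 Q(r/\lambda_{n,j})$, $\sin^4 Q(r/\lambda_{n,j})$, or $\sin^2 Q(r/\lambda_{n,j})\cos Q(r/\lambda_{n,j})$. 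The $w_n^2$-pieces are bounded by $C\|w_n\|_{L^\infty}^2\int_0^\infty \sin^2 Q(r)\,\ud r/r \lesssim \|w_n\|_{L^\infty}^2 \to 0$. The $(\p_r w_n)^2$-piece $\int_0^\infty(\p_r w_n)^2\sin^2 Q(r/\lambda_{n,j})\,r\,\ud r$ I would handle by splitting the domain as $(0, A^{-1}\lambda_{n,j})\cup(A^{-1}\lambda_{n,j}, A\lambda_{n,j})\cup(A\lambda_{n,j},\infty)$: the tail pieces are bounded by $\lesssim A^{-2k}\|w_n\|_\E^2$ using $\sin^2 Q(r/\lambda)\lesssim(r/\lambda)^{2k}$ for $r\leq\lambda$ and $\lesssim(\lambda/r)^{2k}$ for $r\geq\lambda$, while the middle piece satisfies $\leq\|w_n\|_{\E(A^{-1}\lambda_{n,j},A\lambda_{n,j})}^2 \to 0$ by the no-concentration hypothesis with $\lambda_n = \lambda_{n,j}$. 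Sending $n\to\infty$ first, then $A\to\infty$, yields $S_{n,j}(w_n)\to 0$; likewise $|L_n(w_n)|\lesssim\|w_n\|_{L^\infty}\to 0$, since the first-variation coefficients are integrable and pointwise controlled by $\sin Q_{\lambda_{n,j}}$ or $\p_r Q_{\lambda_{n,j}}$.

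Combining the three steps will give $J(v_n) \geq 2\|w_n\|_\E^2 - o(1) - o(\|w_n\|_\E^2)$, so $\limsup_n J(v_n)\leq 0$ forces $\|w_n\|_\E \to 0$. The main technical obstacle will be organizing the splittings so that all error terms are simultaneously small at every one of the $M$ bubble scales and in every neck region; this will be arranged by a diagonalization (via Lemma~\ref{lem:sequences}) producing a single sequence $A_n\to\infty$ with $\|w_n\|_{\E(A_n^{-1}\lambda_{n,j}, A_n\lambda_{n,j})}\to 0$ for every $j=1,\dots,M$, after which the transition scales $a_{n,j}$ can be chosen by pigeonhole from the hypothesis applied to any sequence $\lambda_n\lesssim 1$.
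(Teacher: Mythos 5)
Your proposal takes a genuinely different route from the paper, because the paper does not prove Lemma~\ref{lem:jk} at all: it defers to \cite[Theorem 3.2]{JK}, remarking only that the heart of the matter is that the virial functional vanishes at $Q$ and is coercive where the map is near multiples of $\pi$. You reconstruct a self-contained argument by Taylor-expanding the Jia--Kenig functional $J$ around $\calQ_n$, and the skeleton is sound: your computation of $J(Q)=0$ is correct; $J(\calQ_n)=o(1)$ follows from scale separation as you indicate; the cubic remainder is indeed $O(\|w_n\|_{L^\infty}\|w_n\|_{\E}^2)$; and your treatment of the quadratic form is right --- writing $\cos 2\calQ_n=1-2\sin^2\calQ_n$, $\cos 4\calQ_n=1-2\sin^2 2\calQ_n$, and using $|\sin\calQ_n|\le\sum_j|\sin Q_{\lam_{n,j}}|$ together with $r\p_r Q_\lam=k\sin Q_\lam$, every deviation from $2\|w_n\|_{\E}^2$ carries a factor $\sin^2 Q_{\lam_{n,j}}$, and the $(\p_r w_n)^2$ pieces are exactly where the no-concentration hypothesis enters through your three-region splitting. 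Two housekeeping points: you should record that $\|w_n\|_{\E}$ is finite and bounded (e.g.\ from $|w_n|\le 2|\sin w_n|\le 2(|\sin v_n|+|\sin\calQ_n|)$ once $\|w_n\|_{L^\infty}$ is small, plus the energy bound), since your tail estimates and the final absorption use it; and your argument never invokes the hypothesis $\|w_n\|_{\E(r\ge r_n^{-1})}\to 0$, which is harmless but worth noting.

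The one step whose justification does not hold as written is the linear term. $L_n(w_n)$ contains $\int_0^\infty 4\,\p_r\calQ_n\,\cos(2\calQ_n)\,\p_r w_n\, r\,\ud r$, which is linear in $\p_r w_n$ and is not bounded by $\|w_n\|_{L^\infty}$ by any pointwise control of the coefficient; Cauchy--Schwarz only gives $O(\|w_n\|_{\E})$, and an $O(\|w_n\|_{\E})$ linear term would only yield boundedness of $\|w_n\|_{\E}$, not decay, so this term must genuinely be shown to be $o(1)$. Two fixes, both within your toolkit: (i) integrate by parts, writing $4r\,\p_r\calQ_n\cos 2\calQ_n=4k\cos(2\calQ_n)\sum_j\iota_j\sin Q_{\lam_{n,j}}$, so the term becomes $-\int_0^\infty \p_r\big(4r\,\p_r\calQ_n\cos 2\calQ_n\big)w_n\,\ud r$ with derivative pointwise $\lesssim r^{-1}\sum_j|\sin Q_{\lam_{n,j}}|$ (plus products of two such factors), which is integrable in $\ud r$ uniformly in $n$ for $k\ge 1$; this gives $\lesssim\|w_n\|_{L^\infty}$, and for a single bubble one even finds the clean cancellation leaving the coefficient $4k^2r^{-2}\sin^2 Q\,\sin 2Q$; or (ii) run the same splitting you use for the quadratic part: on $A^{-1}\lam_{n,j}\le r\le A\lam_{n,j}$ bound the contribution by $\|\p_r Q_{\lam_{n,j}}\|_{L^2}\|w_n\|_{\E(A^{-1}\lam_{n,j},A\lam_{n,j})}\to 0$, and on the tails by $A^{-k}\|w_n\|_{\E}$, since $\|\p_r Q_{\lam}\|_{L^2(r\ge A\lam)}+\|\p_r Q_{\lam}\|_{L^2(r\le A^{-1}\lam)}\lesssim A^{-k}$. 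With either repair the proof closes as you describe.
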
 

\begin{rem} 
 Lemma~\ref{lem:jk} is not stated in~\cite{JK} exactly as given above. However, an examination of~\cite[Proof of Theorem 3.2]{JK} shows that this is precisely what is established.   The heart of the matter lies in the fact that the Jia-Kenig virial functional~\eqref{eq:jk0} vanishes at $Q$, i.e., 
\EQ{
\int_0^\infty \bigg( k^2 \frac{\sin^2(2Q)}{2r^2} + (\p_r Q)^2 2 \cos (2 Q)  \bigg) \,r \,  \ud r    = 0 , 
} 
but gives coercive control of the energy in regions where $v_n( r)$ is near integer multiples of~$\pi$.
\end{rem}

\begin{proof}[Proof of Lemma~\ref{lem:compact}] By rescaling we may assume that $\rho_n =1$ for each $n$. 

First, we observe that after passing to a subsequence,  $u_n$ admits a profile decomposition, 
 \EQ{
  u_n &= m  \pi +  \sum_{j = 1}^{K_0} ( \psi^j \big(  \frac{ \cdot}{\lam_{n, j}} \big) - m_j  \pi)   + \sum_{i =1}^J  v^i \big(  \frac{ \cdot}{\s_{n, i}} \big) + w_{n}^J( \cdot). 
 }
 where the profiles $(\psi^j, \lam_{n,j}), (v^j, \sigma_{n, j})$ and the error satisfy the conclusions of Lemma~\ref{lem:pd}. 

 \textbf{Step 1.} We make an initial restriction on the sequence $R_n \to \infty$, refining our choice of this sequence later in the proof. Consider the sets of indices 
 \EQ{
 \calJ_{\infty} := \{ j \in \{1, \dots, K_0\} \mid \lim_{n \to \infty} \lam_{n, j} = \infty\}, \quad \mathcal{I}_{\infty} := \{ i \in \N \mid \lim_{n \to \infty} \sigma_{n, i} = \infty\}
 }
 By Lemma~\ref{lem:sequences} we choose a sequence $R_{n,1} \to \infty$ so that $R_{n, 1} \ll \lam_{n, j} , \sigma_{n, i}$ for each $\lam_{n, j}$ with $j \in \calJ_{\infty}$ and each $\sigma_{n, i}$ with $i \in \calI_{\infty}$. If follows that
 \EQ{
\lim_{n \to \infty}  E( \psi^j( \cdot/ \lam_{n, j}); 0, R_{n, 1} )  = 0, \quad \lim_{n \to \infty}  E( v^j( \cdot/ \sigma_{n, i}); 0, R_{n, 1} )  = 0
 }
 for any of the indices $j \in \calJ_{\infty}$ or $i \in \calI_{\infty}$, and thus these profiles do not factor into the distance $\bs \delta_{R_{n}} (u_n)$ for any sequence $R_n \le R_{n, 1}$. 
 
 \textbf{Step 2:} Next we perform a bubbling analysis on the profiles with bounded scale. Define
 \EQ{
  \calJ_{0} := \{ j \in \{1, \dots, K_0\} \mid \lim_{n \to \infty} \lam_{n, j} < \infty\}, \quad \mathcal{I}_{0} := \{ i \in \N \mid \lim_{n \to \infty} \sigma_{n, i} < \infty\}
   }
   First, for $j \in \calJ_0$ and $i \in \calI_0$, denote 
   \EQ{
   u_n^j(r) := u_n( \lam_{n, j} r), \quad u_n^i(r):= u_n( \sigma_{n, i} r) 
   }
   Then we have $u_n^j \to \psi^j$ as $n \to \infty$ locally uniformly in $(0, \infty)$ and weakly in $\dot H^1(\R^2)$ (that is, if we view each $u_n^j$ as a radially symmetric function on $\R^2$).  These convergence properties are by construction, see~\cite[pg. 1594]{JK}). Moreover, since $\lim_{n  \to \infty} \lam_{n, j} < \infty$ we have, 
   \EQ{
   \| \calT( u_n^j) \|_{L^2} = \lam_{n, j}  \| \calT(u_n) \|_{L^2} \to 0 \mas n \to \infty
   }
   It follows that, 
   \EQ{
   \La \calT( \psi^j) \mid \phi \Ra_{L^2} = 0 
   }
   for all $ \phi \in C^\infty_0(0, \infty)$, i.e., $\psi^j$ is a weak harmonic map, and hence a smooth harmonic map by H\'elein~\cite{Hel}. Since $\abs{m_j - \ell_j} \ge 1$ we see that $E(\psi^j) \ge E( Q)$, and thus $\psi^j = \ell_j \pi + \iota_j Q_{\lam_{j, 0}}$ for some $\iota_j \in \{-1, 1\}$ and some fixed scale $\lam_{j, 0}$ and $m_j = \ell_j + \iota \pi$.  We will abuse notation and replace $\lam_{n, j}$ with  $\lam_{n, j} \lam_{n, 0}$  while still calling this sequence $\lam_{n, j}$. 
   
 We perform the same analysis with the $u_n^i$  and $v^i$, concluding that each $v^i$ is a smooth harmonic map. But since  $v^i \in \E_{0, 0}$ we find that $v^i  \equiv 0$ for every $i \in \calI_0$. 

\textbf{Step 3:} Next, by~\eqref{eq:no-tension} and recalling that we have rescaled so that $\rho_n = 1$,  we let $R_{2, n} \to \infty$ be a sequence such that 
\EQ{
1 \ll R_{2, n} \ll \| \calT( u_n ) \|_{L^2}^{-1} . 
}
Then, by Cauchy-Schwarz
\EQ{ \label{eq:jk}
\Big|\La \calT(u_n)  \mid \sin(2 u_n) \chi_{\ti R_{n}} \Ra \Big| \le  \| \calT( u_n ) \|_{L^2} \ti R_{ n}  \to 0 \mas n \to \infty
}
for any sequence $\ti R_{n} \le R_{2, n}$. We define $R_{3, n} := \min( R_{1, n}, R_{2, n})$. 

\textbf{Step 4:} We close in on the final selection of the sequence $R_n$, choosing first $\sqrt{R_{3, n}} \le R_{4, n} \le (1/2) R_{3, n}$ so that 
\EQ{
E( u_n; \frac{1}{4} R_n, 4R_n) \to 0 \mas n \to \infty
}
The existence of such a sequence is proved by pigeonholing; see for example~\cite[Eq. (3.12)]{JL6}. Using Lemma~\ref{lem:pi} we can, after passing to a subsequence, find an integer $m_1 \in \Z$ so that $|u_n(r) - m_1\pi| \to 0$ for a.e., $r \in [\frac{1}{4} R_n, 4 R_n]$, and we define a truncated sequence 
\EQ{
\ti u_n := \chi_{R_{4, n}} u_n + (1- \chi_{R_{4, n}}) m_1 \pi 
}
By construction we have the following decomposition for $\ti u_n$, 
\EQ{
\ti u_n  = m_1 \pi + \sum_{j \in \calJ_0} ( \iota_j Q_{\lam_j} -  \pi) + \ti w_n
}
where the error $\ti w_n := \chi_{R_{4,n}} w_{n}^J + o_n(1)$  (note we can drop the index $J$ since any nontrivial profiles from the index sets $\calJ_\infty$ or $\calI_\infty$ contribute a vanishing error in the region $r \le R_{4, n}$ by Step 1 and there are no nontrivial profiles from the index set $\calI_{0}$ from Step 2). We define $M:= \# \calJ_0$ and we reorder/relabel the profiles so that $\lam_{n, 1} \ll \lam_{n, 2} \ll  \dots \lam_{n, M}$ for the indices $j \in \calJ_0$.   Note that we have proved that 
\EQ{ \label{eq:Linfty-w} 
\lim_{n \to \infty} \| \ti w_n  \|_{L^\infty}  = 0
}
After passing to a subsequence of the $ u_n$, we claim there is a sequence $R_n \to \infty$ with the  properties, 
\EQ{\label{eq:rn} 
1 \ll R_n \le R_{4, n} , \quad \|{\ti w}_{n}\|_{\E( \frac{1}{4} R_n^{-1} \le r \le 4 R_n)} \to 0 \mas n \to \infty.
}
The existence of such a sequence is a consequence of the following property about $\ti w_{n}$: for any  
sequence $\lam_n \lesssim 1$ and any $A>1$ we have, 
\EQ{ \label{eq:every-scale} 
\| w_{n}\|_{\E(  \lam_n A^{-1} \le r \le  \lam_n A)} \to 0 \mas n \to \infty. 
}
The property~\eqref{eq:every-scale} was proved in~\cite[Step 2., p.1973-1975, Proof of Theorem 3.5]{Cote15} and ~\cite[Proof of (5.29) in Theorem 5.1]{JK}   and we refer the reader to those works for details of the argument, which also applies in the current setting. The intuition is that at any scale $\lam_{n} \lesssim 1$ at which $\ti u_n$ carries energy we have already extracted a profile $Q_{\lam_{n, j}}$ with $\lam_{n, j} \simeq \lam_n$. 
To prove~\eqref{eq:rn} we  consider the case $\lam_n =1$ in~\eqref{eq:every-scale}, and passing to a subsequence of the ${\ti u}_n$, we obtain a sequence as in~\eqref{eq:rn}. 

We truncate to the region $r \le R_n$, following the same procedure used to define $\bs {\ti u}_n$, using now $R_n$ in place of $R_{4, n}$. Indeed, set
\EQ{
{\breve{u}}_n(t_n, r) := \chi_{R_n}(r) {\ti u}_n(t, r) + (1-  \chi_{R_n}(r) )  m_1 \pi.
}
Defining ${\breve w}_{n, 0} := \chi_{R_n}(r) {\ti{w}}_{n}  + ( \chi_{R_n}(r) -1) \sum_{j =1}^{M}  \iota_j(   Q \big(  \frac{ \cdot}{\lam_{n, j}} \big)  -  \pi) $ and using that $ \lam_{n, 1} \ll \dots \ll \lam_{n, M} \lesssim 1$ along with~\eqref{eq:Linfty-w} and~\eqref{eq:rn} we see that, 
\EQ{ \label{eq:titiu} 
&{\breve{ u}}_n(t_n) = m_1  \pi + \sum_{j =1}^{M} \iota_j(   Q \big(  \frac{ \cdot}{\lam_{n, j}} \big)  -   \pi)  + { \breve{ w}}_{n, 0}, \mand \\
&\lim_{n \to \infty}\Big(  \| {\breve w}_{n} \|_{\E(R_n^{-1} \le r < \infty)} + \| \breve w_{n} \|_{L^\infty}  \Big)  = 0.
}
Moreover, by~\eqref{eq:every-scale} we see that for any sequence $\lam_n \lesssim 1$ and any $A>1$ that, 
\EQ{\label{eq:every-scale-titi} 
\lim_{n \to \infty} \|{\breve w}_{n}\|_{\E(  \lam_n A^{-1} \le r \le  \lam_n A)} = 0.
}
Note that since $\breve u_n(r) = u_n(r)$ for $r \le R_n$, we deduce from~\eqref{eq:jk} that, 
\EQ{
\Big| \La \calT( \breve u_n) \mid \sin (2 \breve u_n) \chi_{\frac{1}{4} R_n} \Ra \Big|  \to 0 \mas n \to \infty 
}
We claim that 
\EQ{
\Big| \La \calT( \breve u_n) \mid \sin (2 \breve u_n)(1-  \chi_{\frac{1}{4} R_n}) \Ra \Big|  \to 0 \mas n \to \infty 
}
as well. To see this, note that by~\eqref{eq:titiu} 
\EQ{
\lim_{n \to \infty} E( \breve u_n; r_n, \infty)  = 0
}
for any sequence $r_n \to \infty$. And after integration by parts we deduce the bound, 
\EQ{
\Big| \La \calT( \breve u_n) \mid \sin (2 \breve u_n)(1-  \chi_{\frac{1}{4} R_n}) \Ra \Big|  \lesssim E(\breve u_n; 1/8 R_n, \infty) \to 0 \mas n \to \infty
}
Hence, 
\EQ{
\Big| \La \calT( \breve u_n) \mid \sin (2 \breve u_n) \Ra \Big|  \to 0 \mas n \to \infty 
}
Integrating by parts on the left hand side, we see that 
\EQ{
\lim_{n  \to \infty}  \int_0^\infty \bigg( k^2 \frac{\sin^2(2 \breve u_n)}{2r^2} + (\p_r \breve u_n)^2 2 \cos (2 \breve u_n)  \bigg) \,r \,  \ud r   = 0.
}
The sequence $\breve u_n$ then satisfies all the conditions of Lemma~\ref{lem:jk} and we conclude that $ \lim_{ n \to \infty} \| \breve w_n \|_{\E}  = 0$. Since $\breve u_n(r) = u_n(r)$ for $r \le R_n$ we conclude that $ \lim_{n \to \infty} \bs \de_{R_n} ( u_n) = 0$. An examination of the decomposition~\eqref{eq:titiu} yields the remaining claims in from Lemma~\ref{lem:compact}. 
\end{proof}

\section{Sequential bubbling} 

\subsection{Sequential bubbling for finite time blow-up solutions}

\begin{prop}[Sequential bubbling for solutions that blow up in finite time]  \label{prop:seq-ftbu} 
Let $\ell, m \in\Z$, $u_0 \in \E_{\ell, m}$, and let $u(t)$ denote the solution to~\eqref{eq:hf} with initial data $u_0$. Suppose that $T_+(u_0) < \infty$. There exist integers $m_{\infty}, m_{\Delta}$, a mapping $u^* \in \E_{0, m_\infty}$,  an integer $N \ge 1$, a sequence of times $t_n \to T_+$, signs $\vec\iota \in \{-1, 1\}^N$, a sequence of scales $\vec \lam_n \in (0, \infty)^N$, and an error $g_n$ defined by 
\EQ{
u(t_n) = m_{\Delta} \pi + \sum_{j =1}^N \iota_j( Q_{\lam_n} - \pi) + u^* + g_n,
}
with the following properties: 
\begin{itemize} 
\item[(i)] The integer $N \ge 1$ and the body map $u^*$ satisfy, 
\EQ{ \label{eq:energy-limit} 
\lim_{t \to T_+} E( u(t)) = N E(Q)  + E( u^*); 
}
\item[(i)] for any $\al>0$, 
\EQ{ \label{eq:N-bubbles-bu} 
\lim_{t \to T_+} E( u(t); 0, \al(T_+ - t)^{\frac{1}{2}}) = NE(Q), 
}
\EQ{ \label{eq:en-ext-bu} 
\lim_{t \to T_+} E( u(t) - u^*; \al(T_+ - t)^{\frac{1}{2}}, \infty) = 0, 
}
and there exists $0< T_0 < T_+$ and function $\rho : [T_0, T_+) \to (0,\infty)$ satisfying, 
\begin{equation} \label{eq:radiation} 
\lim_{t \to T_*} \big((\rho(t) / \sqrt{T_+-t}) + \| u(t) -  u^* - m_\Delta \pi\|_{\cE(\rho(t))}\big) = 0; 
\end{equation}

\item[(ii)] 
the error $g_n$ and the scales $\vec \lam_n$ satisfy, 
\EQ{ \label{eq:d(t_n)} 
\lim_{ n \to \infty} \Big( \| g_n \|_{\E}^2 + \sum_{j =1}^N \Big( \frac{\lam_{n, j}}{\lam_{n, j+1}} \Big)^k  \Big)^{\frac{1}{2}}  = 0 , 
}
where here we adopt the convention that $\lam_{n, N+1} := (T_+ - t_n)^{\frac{1}{2}}$. 
\end{itemize} 
\end{prop}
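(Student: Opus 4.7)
The plan is to combine a pigeonhole selection of times in $[0,T_+)$ with two ingredients: the localized compactness Lemma~\ref{lem:compact} for the multi-bubble extraction at the self-similar scale $\sqrt{T_+-t}$, and Struwe's local energy inequality~\eqref{eq:loc-en-ineq} for the construction of the body map $u^*$. For the body map, fix $\rho>0$ and apply~\eqref{eq:loc-en-ineq} with the time-independent spatial cutoff $\phi_\rho(r):=1-\chi(r/\rho)$. Since $\|\p_r\phi_\rho\|_\infty\lesssim\rho^{-1}$ and $\p_t u=\calT(u)$, the tension error term on the right is bounded by $C\sqrt{E(u_0)}\,\rho^{-1}(t_2-t_1)^{1/2}\big(\int_{t_1}^{t_2}\|\calT(u(s))\|_{L^2}^2\,\ud s\big)^{1/2}\to 0$ as $t_1,t_2\to T_+$ by~\eqref{eq:tension-L2}. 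Combined with parabolic regularity away from the (sole possible) blow-up point $r=0$, this produces a single limit $u^*\in\cE_{0,m_\infty}$ and integers $m_\Delta,m_\infty\in\Z$ with $m_\Delta+m_\infty=m$ such that $u(t)-m_\Delta\pi\to u^*$ in $\cE(\rho,\infty)$ for every $\rho>0$; Lemma~\ref{lem:pi} identifies the boundary integer levels unambiguously. A diagonal extraction then yields a function $\rho(t)\downarrow 0$ with $\rho(t)\gg\sqrt{T_+-t}$ for which~\eqref{eq:radiation} holds, and therefore~\eqref{eq:en-ext-bu}.

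\textbf{Time sequence and bubble extraction.} Partition $[0,T_+)$ into dyadic subintervals $I_n=[T_+-2^{-n},T_+-2^{-n-1}]$. The finiteness of~\eqref{eq:tension-L2} combined with Chebyshev yields, after passing to a subsequence, $t_n\in I_n$ with $\sqrt{T_+-t_n}\,\|\calT(u(t_n))\|_{L^2}\to 0$. For any sequence $\epsilon_n\downarrow 0$ the hypothesis~\eqref{eq:no-tension} holds at scale $\rho_n:=\epsilon_n\sqrt{T_+-t_n}$, and choosing $\epsilon_n$ slow enough (via Lemma~\ref{lem:sequences} applied to the $R_n$ produced by Lemma~\ref{lem:compact}) that $R_n\epsilon_n\to\infty$, Lemma~\ref{lem:compact} gives integers $m'\in\Z$, $N\ge 0$, signs $\vec\iota$, and scales $\vec\lam_n$ with $\lam_{n,N}\lesssim\epsilon_n\sqrt{T_+-t_n}\ll\sqrt{T_+-t_n}$ and $\lam_{n,j}/\lam_{n,j+1}\to 0$, such that $\|u(t_n)-\calQ(m',\vec\iota,\vec\lam_n)\|_{\cE(r\le R_n\rho_n)}\to 0$. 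In the overlap region $\rho(t_n)\le r\le R_n\rho_n$ both $\calQ(m',\vec\iota,\vec\lam_n)$ and $u^*+m_\Delta\pi$ lie within $o(1)$ of integer multiples of $\pi$ (the bubbles sit past their tails while $u^*(r)\to 0$ as $r\to 0$), so Lemma~\ref{lem:pi} forces $m'=m_\Delta$. Splitting the $\cE$-norm of $g_n$ defined via~\eqref{eq:sr-bu} across the interior, overlap, and exterior regions gives~\eqref{eq:d(t_n)}.

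\textbf{Remaining properties and main obstacle.} The bound $N\ge 1$ follows from the blow-up criterion~\eqref{eq:bu-crit}: if $N=0$, the previous step combined with~\eqref{eq:H-E-comp} would give $E(u(t_n);0,r_0)\to 0$ for every $r_0>0$, and~\eqref{eq:loc-en-ineq-1} applied to a complementary cutoff would propagate this to $\limsup_{t\to T_+}E(u(t);0,r_0)=0$, contradicting~\eqref{eq:bu-crit}. The limit~\eqref{eq:energy-limit} follows from the scale-separated expansion $E(u(t_n))=NE(Q)+E(u^*)+o(1)$ (bubble--bubble interactions are $o(1)$ by Lemma~\ref{lem:M-bub-energy}, and bubble--body map cross terms vanish since $\lam_{n,N}\ll\sqrt{T_+-t_n}$ forces $u^*$ to be $o(1)$ on the bubble support) together with the monotonicity of $E(u(t))$; then~\eqref{eq:N-bubbles-bu} is a consequence of $\lam_{n,N}/\sqrt{T_+-t_n}\to 0$. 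The main obstacle is in the body-map construction: upgrading the fixed-$\rho$ Cauchy property into the $\rho(t)\downarrow 0$ continuous-in-$t$ statement~\eqref{eq:radiation} requires that $\rho(t)$ decay slowly enough relative to the tail of the tension integral, handled by a diagonal argument in the spirit of Lemma~\ref{lem:sequences}. A secondary delicate point is the matching of the interior bubble decomposition with the exterior convergence to $u^*$ in the overlap region, which is resolved by the flexibility in choosing $\epsilon_n\downarrow 0$.
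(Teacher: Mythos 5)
Your outline reproduces the easy parts of the paper's argument (choice of $t_n$ by Chebyshev from \eqref{eq:tension-L2}, application of Lemma~\ref{lem:compact} at the self-similar scale, $N\ge 1$ from \eqref{eq:bu-crit}, and \eqref{eq:energy-limit} from monotonicity of $E(u(t))$ plus the sequential expansion), but it has a genuine gap exactly where the real work lies: none of the continuous-in-time statements \eqref{eq:N-bubbles-bu}, \eqref{eq:en-ext-bu}, \eqref{eq:radiation} is actually proved. Your construction yields only (a) sequential information along $t_n$ and (b) convergence $u(t)-m_\Delta\pi\to u^*$ in $\cE(r_0,\infty)$ for each \emph{fixed} $r_0>0$. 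A diagonal extraction from (b) produces some $\rho(t)\to 0$, but with no control of $\rho(t)$ against the self-similar scale; note that \eqref{eq:radiation} requires $\rho(t)/\sqrt{T_+-t}\to 0$, i.e.\ $\rho(t)$ must decay \emph{faster} than $\sqrt{T_+-t}$, whereas you write $\rho(t)\gg\sqrt{T_+-t}$, and your ``decay slowly enough'' heuristic points in the wrong direction. What is really needed is that, continuously in $t$, no energy of $u(t)-u^*$ survives between (any multiple of) the self-similar scale and any fixed scale. The paper obtains this from the localized energy functional $\Theta_R(t)=\int_0^\infty\chi_R^2\,\bfe(u(t,r))\,r\,\ud r$: by \eqref{eq:local-energy} and the vanishing tail of \eqref{eq:tension-L2} the limit $\lim_{t\to T_+}\Theta_R(t)$ exists for each $R$; comparison with the body map (Lemma~\ref{lem:body-id}, which supplies the limit $L$) shows this limit equals $\frac{1}{2\pi}L+\Theta_R^*$ with $L$ independent of $R$; then taking $R=\gamma\sqrt{T_+-s}$ and letting $\tau\to T_+$ before $s\to T_+$ gives $\lim_{s\to T_+}E(u(s);0,\al\sqrt{T_+-s})=L$ for every $\al>0$, hence no energy in self-similar annuli, hence $L=NE(Q)$, $\lam_{n,N}\ll\sqrt{T_+-t_n}$, \eqref{eq:N-bubbles-bu}, \eqref{eq:en-ext-bu}, and only then \eqref{eq:radiation}. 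Your logical order (radiation first, then \eqref{eq:en-ext-bu} as a consequence) inverts this and leaves the hard step unproved; likewise your claim that \eqref{eq:N-bubbles-bu} ``is a consequence of $\lam_{n,N}/\sqrt{T_+-t_n}\to 0$'' gives it only along the sequence $t_n$, not as $t\to T_+$.

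A second, related problem is that your device for forcing $\lam_{n,N}\ll\sqrt{T_+-t_n}$ is circular. In Lemma~\ref{lem:compact} the sequence $R_n$ is an \emph{output}, produced after the input scales $\rho_n$ are fixed (it is constrained, among other things, by $R_n\rho_n\ll\|\calT(u(t_n))\|_{L^2}^{-1}$ and by the scales of the profiles escaping to infinity), so you cannot apply the lemma with $\rho_n=\epsilon_n\sqrt{T_+-t_n}$ and afterwards choose $\epsilon_n$, depending on the resulting $R_n$, so that $R_n\epsilon_n\to\infty$. If $u(t_n)$ carried a bubble at scale comparable to $\sqrt{T_+-t_n}$, the lemma could return $R_n$ with $R_n\epsilon_n\to 0$; the decomposition region would then fail to reach the self-similar scale, and neither $\lam_{n,N}\ll\sqrt{T_+-t_n}$ nor \eqref{eq:d(t_n)} (with the convention $\lam_{n,N+1}=\sqrt{T_+-t_n}$) would follow. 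Excluding a bubble at the self-similar scale is precisely the content of the energy-concentration argument described above; it cannot be obtained by shrinking the scale fed into the compactness lemma.
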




\begin{lem} [Identification of the body map] \label{lem:body-id} 
Let $u_0 \in \E_{\ell, m}$ and let $u(t)$ be the solution to~\eqref{eq:hf}. Suppose that $T_+(u_0)< \infty$ and let $I_* = [0, T_+)$. There exist $m_{\infty}, m_{\Delta} \in \Z$ and a mapping $u^* \in \E_{0, m_{\infty}}$ such that for any $r_0>0$, 
\EQ{ \label{eq:local-ext-lim} 
\lim_{t \to T_*}\| u(t) -  u^* - m_\Delta \pi\|_{\cE(r \ge r_0)} = 0.
} 
Moreover, there exists $L>0$ such that for each $r_0  \in (0, \infty]$, 
\EQ{ \label{eq:L-def} 
\lim_{t \to T_+} E( u(t); 0, r_0) = L + E(u^*; 0, r_0), 
}
and in particular, $\lim_{r_0 \to 0} \lim_{t \to T_+} E( u(t); 0, r_0) = L$. 

\end{lem}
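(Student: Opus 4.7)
The plan is to identify $u^*$ as the $t\to T_+$ limit of $u(t)$ in the exterior $\{r\geq r_0\}$ for every $r_0>0$, then extract the integer parameters and energy formula from the monotonicity of the total energy. The blow-up criterion \eqref{eq:bu-crit} confines singularity formation to the origin in the equivariant setting, so convergence on any fixed exterior region is expected.

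First, for fixed $r_0>0$ and $t_1<t_2<T_+$, the identity $u(t_2,r)-u(t_1,r) = \int_{t_1}^{t_2}\calT(u(s,r))\,\ud s$ combined with Cauchy--Schwarz gives
\begin{equation*}
\int_{r_0}^\infty (u(t_2,r)-u(t_1,r))^2 \,\frac{\ud r}{r} \;\leq\; \frac{t_2-t_1}{r_0^2}\int_{t_1}^{t_2}\|\calT(u(s))\|_{L^2}^2\,\ud s.
\end{equation*}
By \eqref{eq:tension-L2} the right-hand side tends to $0$ as $t_1,t_2\to T_+$, so $u(t)$ is Cauchy in a weighted $L^2(\{r\ge r_0\})$ norm and has an a.e.\ pointwise limit $v^*(r)$ on $(0,\infty)$.

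Second, I would upgrade this to strong convergence in $\cE(\{r\ge r_0\})$. Apply the local energy inequality \eqref{eq:loc-en-ineq-1} with a time-independent cutoff $\phi=\phi(r)$ supported in $\{r\ge r_0/2\}$ and equal to $1$ on $\{r\ge r_0\}$. Since $|\partial_r\phi|^2 \lesssim r_0^{-2}\mathbf{1}_{[r_0/2,r_0]}$, the error term is bounded as
\begin{equation*}
2\int_{t_1}^{t_2}\int_0^\infty (\partial_r u)^2 (\partial_r \phi)^2\, r\,\ud r\,\ud t \;\leq\; C(r_0)\,E(u_0)\,(t_2-t_1),
\end{equation*}
which tends to $0$ as $t_1\to T_+$ because $t_2-t_1\leq T_+-t_1$. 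Thus the localized energy $t\mapsto \int\bfe(u(t))\phi^2\, r\,\ud r$ is almost-monotone up to $o(1)$ error near $T_+$, hence its limit exists. Together with the weak compactness of $\{u(t)\}$ in $\cE(\{r\ge r_0\})$ (bounded energy) and the identification of the weak limit as $v^*$ via Step 1, the Radon--Riesz property (weak convergence plus norm convergence yields strong convergence in a Hilbert space) gives $u(t)\to v^*$ strongly in $\cE(\{r\ge r_0\})$ for every $r_0>0$.

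Third, monotone convergence yields $E(v^*) \leq \lim_{r_0\to 0}\lim_{t\to T_+} E(u(t); r_0, \infty) \leq E(u_0) < \infty$, so Lemma~\ref{lem:pi} applied on exterior annuli of small energy produces integers $m_\Delta,m_\infty'\in\Z$ with $v^*(r)\to m_\Delta\pi$ as $r\to 0^+$ and $v^*(r)\to m_\infty'\pi$ as $r\to\infty$. Setting $u^* := v^* - m_\Delta\pi$ and $m_\infty := m_\infty' - m_\Delta$ gives $u^*\in\cE_{0,m_\infty}$ and \eqref{eq:local-ext-lim}. Since $E(u(t))$ is non-increasing, $E_\infty := \lim_{t\to T_+}E(u(t))$ exists, and \eqref{eq:local-ext-lim} forces $E(u(t); r_0, \infty)\to E(u^*; r_0, \infty)$ for each $r_0>0$. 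Defining $L := E_\infty - E(u^*)$ yields \eqref{eq:L-def}, and strict positivity $L>0$ is forced by the blow-up criterion \eqref{eq:bu-crit}: letting $r_0\to 0$ in $L+E(u^*; 0, r_0)\ge \eps_0$ gives $L\ge \eps_0>0$.

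The principal obstacle is Step 2, upgrading weighted $L^2$ convergence to $\cE$-norm convergence. This hinges on controlling the energy flux through the annulus $[r_0/2, r_0]$ in the local energy inequality. The critical feature is that this flux is bounded by $C(r_0)E(u_0)(t_2-t_1)$, with $t_2-t_1\le T_+-t_1\to 0$, so the finiteness of $T_+$ together with the non-increasing total energy suffices to close the argument without invoking the full bubbling theory.
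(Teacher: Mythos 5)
Your Steps 1 and 3 are sound (the weighted $L^2(\ud r/r)$ Cauchy estimate from \eqref{eq:tension-L2}, the almost-monotonicity of the localized energy via \eqref{eq:loc-en-ineq-1} with a time-independent cutoff, the identification of $m_\Delta, m_\infty$ via Lemma~\ref{lem:pi}, and the derivation of $L>0$ from \eqref{eq:bu-crit}), but Step 2 has a genuine gap precisely at the point where the real content of the lemma lies. The Radon--Riesz property requires that the norms converge \emph{to the norm of the weak limit}; what you establish is only that the localized energy $t \mapsto \int \bfe(u(t))\phi^2\, r\,\ud r$ converges to \emph{some} limit $\ell_{r_0}$. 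Weak lower semicontinuity gives $\int \bfe(v^*)\phi^2\, r\, \ud r \le \ell_{r_0}$, and the reverse inequality -- that no Dirichlet energy concentrates on a circle $r = r_* \ge r_0$ (or is otherwise lost in the weak limit) as $t \to T_+$ -- is exactly what needs to be proved and is what the paper imports from Struwe's local regularity theory (\cite[Theorem 4.1]{Struwe85}, strong $C^2$ convergence on compact sets away from the origin, and \cite[Proposition 2.1]{Qing} for $L$). Your hypotheses are a priori compatible with, e.g., small-amplitude, high-frequency oscillations of $u(t)$ concentrating near some $r_* \ge r_0$ that carry a fixed amount of Dirichlet energy while vanishing in the weighted $L^2$ norm of Step 1 and leaving the weak limit $v^*$ unchanged; in that scenario \eqref{eq:local-ext-lim} fails even though every quantity you control behaves as claimed. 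Ruling this out requires a quantitative local ($\eps$-regularity / parabolic smoothing) argument, or an elliptic compactness argument along a sequence of times with vanishing tension plus the continuity furnished by your Step 1 -- not soft Hilbert-space facts. Note also that the standard symmetry argument (``concentration off the origin would occur on a whole circle, hence at infinitely many points each carrying $\ge \eps_0$ of energy'') itself rests on the energy quantization at concentration points, i.e.\ on the very $\eps$-regularity theorem your proposal tries to avoid.

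A secondary, smaller mismatch: the quantity whose limit you obtain is the nonlinear localized energy (with density $\bfe$, involving $\sin^2 u$), whereas Radon--Riesz would need the Hilbert norm $\| u(t) - m_\Delta\pi \|_{\cE(r \ge r_0)}$. The potential parts can be reconciled using the $L^2(\ud r/r)$ convergence of Step 1, so this is repairable, but it reduces again to identifying the limit of the Dirichlet part $\int (\p_r u(t))^2 \phi^2\, r\, \ud r$ with $\int (\p_r v^*)^2 \phi^2\, r\, \ud r$, which is the same missing no-concentration statement. With Struwe's local theory granted, your outline does reassemble into a correct proof, and Step 3's bookkeeping for \eqref{eq:L-def} (defining $L := \lim_{t\to T_+} E(u(t)) - E(u^*)$ and using exterior convergence) matches the paper's statement; but as written the proposal does not prove \eqref{eq:local-ext-lim}.
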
 

\begin{proof}[Proof of Lemma~\ref{lem:body-id}] 
In the general (non-equivariant) setting Struwe~\cite{Struwe85} proves the existence of the body map as the weak limit of the flow in $H^1$ as $t \to T_+$ and moreover that one has strong $C^2$ convergence on compact sets not containing the bubbling points (the origin in our case); see for example~\cite[Step 3, Proof of Theorem 6.16]{Lin-Wang}. The existence of the limit $L$ is proved by Qing in~\cite[Proposition 2.1]{Qing}, and an identical argument can be used in the equivariant setting. 
%
\end{proof} 

\begin{proof}[Proof of Proposition~\ref{prop:seq-ftbu}]
We follow, roughly,  the arguments by Qing in~\cite[Proof of Theorem 1.1]{Qing} and Topping in~\cite[Proof of Theorem 1.4]{Topping-winding}. The main ingredient is the compactness result, Lemma~\ref{lem:compact}. 
Let $u(t) \in \E_{\ell, m}$ be a heat flow blowing up at time $T_+ >0$. By~\eqref{eq:tension-L2} we can find a sequence $t_n \to T_+$ so that, 
\EQ{
(T_+ - t_n)^{\frac{1}{2}} \| \calT( u(t_n)) \|_{L^2}  \to 0 \mas n \to \infty. 
}
 We can now apply Lemma~\ref{lem:compact} with $\rho_n:= (T_+ - t_n)^{\frac{1}{2}}$, which yields $N \ge 0$, $m_0\in \Z$, $\vec \iota \in \{-1, 1\}^N, \vec \lam_n \in (0, \infty)^N$ such that after passing to a subsequence, we have 
\EQ{ \label{eq:decomp-0} 
\lim_{n \to \infty} \Big( \| u(t_n) - \calQ(m_0, \vec \iota, \vec \lam_n) \|_{\E( r \le A (T_+- t_n)^{\frac{1}{2}})}^2 + \sum_{j =1}^{N-1} \Big( \frac{ \lam_{n, j}}{\lam_{n, j+1}} \Big)^{k} \Big) = 0
}
for each $A>0$, and moreover that $\lam_{n, N} \lesssim (T_+- t_n)^{\frac{1}{2}}$. Next, for each $R>0$ define the localized energy, 
\EQ{
\Theta_R(t) := \int_0^\infty \chi_R(r)^2 \bfe(u(t, r)) \, r \, \ud r. 
}
along with the localized energy of the body map, 
\EQ{
\Theta_R^*:= \int_0^\infty \chi_R(r)^2 \bfe(u^*(r)) \, r \, \ud r. 
}
From~\eqref{eq:local-energy} we see that for each $0<s<\tau< T_+$ we have, 
\EQ{ \label{eq:Theta-ineq} 
\Big| \Theta_R(\tau) - \Theta_R(s) \Big| &\lesssim \int_s^{\tau} \| \p_t u(t) \|_{L^2}^2 \, \ud t  + \frac{(\tau - s)^{\frac{1}{2}}}{R} \Big( \int_s^\tau \| \p_t u(t) \|_{L^2}^2 \, \ud  t \Big)^{\frac{1}{2}}\\
&\lesssim  \int_s^{T_+} \| \p_t u(t) \|_{L^2}^2 \, \ud t  + \frac{(T_+ - s)^{\frac{1}{2}}}{R} \Big( \int_s^{T_+} \| \p_t u(t) \|_{L^2}^2 \, \ud  t \Big)^{\frac{1}{2}}
}
Since the right-hand side tends to zero as $s \to T_+$, it follows that $\lim_{t \to T_+}\Theta_R(t) := \ell_R$ exists. Define, 
\EQ{
\frac{1}{2\pi} L_R :=  \ell_R -  \Theta_R^*
}
and we claim that in fact, $L_R = L := \lim_{r_0 \to 0} \lim_{t \to T_+}E( u(t); 0, r_0)$, which is independent of $R>0$. To see this we write, for any $0<r_0 <R$, 
\EQ{
\Theta_R(t) - \Theta_R^*  &= \int_{r_0}^{4R}  \chi_R(r)^2 (\bfe(u(t, r)) - \bfe(u^*(r))) \, r \, \ud r 
 +\frac{1}{2\pi}E( u(t); 0, r_0)    - \frac{1}{2\pi}E(u^*; 0, r_0)
}
Letting $t \to T_+$, the right hand side tends to $\frac{1}{2\pi}L_R$. By~\eqref{eq:local-ext-lim} the first term on the left vanishes as $t \to T_+$. Sending $r_0 \to 0$ after letting $t \to T_+$ on the right, we see from~\eqref{eq:L-def} that $L_R = L = \lim_{r_0 \to 0} \lim_{t \to T_+}E( u(t); 0, r_0)$. 

Next, let $\gamma>0$ and set $R = \gamma(T_+- s)^{\frac{1}{2}}$ in~\eqref{eq:Theta-ineq} we obtain, after letting $\tau \to T_+$, 
\EQ{
\Big| \frac{1}{2 \pi} L + \Theta_{\gamma(T_+ -s)^{\frac{1}{2}}}^* - \Theta_{\gamma(T_+ -s)^{\frac{1}{2}}}(s) \Big| &\lesssim \int_s^{T_+} \| \p_t u(t) \|_{L^2}^2 \, \ud t  + \frac{1}{\gamma} \Big( \int_s^{T_+} \| \p_t u(t) \|_{L^2}^2 \, \ud  t \Big)^{\frac{1}{2}}
}
Letting $s \to T_+$ above we see that $\lim_{s \to T_+}  \Theta_{\gamma(T_+ -s)^{\frac{1}{2}}}(s) = \frac{1}{2\pi} L$ for all $\gamma>0$. 

Let $\al >0$ and note the inequality, 
\EQ{
2 \pi \Theta_{\frac{\al}{2}(T_+ -s)^{\frac{1}{2}}}(s) \le E( u(s); 0, \al (T_+ -s)^{\frac{1}{2}}) \le  2 \pi \Theta_{\al(T_+ -s)^{\frac{1}{2}}}(s)
}
which implies that  $\lim_{s \to T_+}E( u(s); 0, \al (T_+ -s)^{\frac{1}{2}}) = L$ for any $\alpha>0$. Hence, for any $0< \al< A < \infty$, $\lim_{s \to T_+} E(u(s); \al (T_+ -s)^{\frac{1}{2}}, A(T_+ -s)^{\frac{1}{2}}) = 0$.   Returning to the decomposition~\eqref{eq:decomp-0} we find that 
\EQ{ \label{eq:lambda_N/ss} 
\frac{\lam_{n, N} }{(T_+ - t_n)^{\frac{1}{2}}} \to 0 \mas n  \to \infty, 
}
and as a consequence, $L = NE(Q)$ and~\eqref{eq:N-bubbles-bu} is proved. Further, we see from~\eqref{eq:L-def} that for every $r_0>0$, 
\EQ{
\lim_{t \to T_+} E( u(t); 0, r_0) =  N E(Q) + E(u^*; 0, r_0). 
}
and we see from~\eqref{eq:bu-crit} that $N \ge 1$. 
Combining the above with~\eqref{eq:N-bubbles-bu} we see that for every $\alpha>0$, $r_0 \in (0, \infty]$, 
\EQ{ \label{eq:ss-to-r_0} 
\lim_{t \to T_+} E( u(t); \alpha(T_+ - t)^{\frac{1}{2}}, r_0) =  E(u^*; 0, r_0)
}
and~\eqref{eq:energy-limit} now follows. 
Next, if~\eqref{eq:en-ext-bu} were to fail, we could find $\al_1, \eps_1>0$ and a sequence $s_n \to T_+$ such that 
\EQ{
E( u(s_n) - u^*; \al_1 (T_+- s_n)^{\frac{1}{2}}, \infty) \ge \eps_1 , \quad \forall \, n. 
}
To reach a contradiction, we choose $r_0>0$ sufficiently small so that $E( u^*; 0, r_0) \le \eps_1/8$, and then, using~\eqref{eq:local-ext-lim} and~\eqref{eq:ss-to-r_0}, $n$ sufficiently large so that $E(u(s_n) - u^*; r_0, \infty) \le \eps_1/8$ and $E(u(s_n); \al_1 (T_+- s_n)^{\frac{1}{2}}, r_0) \le \eps_1/4$. We then estimate, 
\EQ{
E( u(s_n) - u^*;& \al_1 (T_+- s_n)^{\frac{1}{2}}, \infty) \le E( u(s_n) - u^*; \al_1 (T_+- s_n)^{\frac{1}{2}}, r_0) + E( u(s_n) - u^*; r_0, \infty)\\
&\le 2E( u(s_n); \al_1 (T_+- s_n)^{\frac{1}{2}}, r_0) + 2 E(u^*; \al_1 (T_+- s_n)^{\frac{1}{2}}, r_0) + \eps_1/8  \le 7\eps_1/8, 
}
a contradiction, proving~\eqref{eq:en-ext-bu}. We see from~\eqref{eq:local-ext-lim} and~\eqref{eq:decomp-0} that $m_0 = m_{\De}$ and from Lemma~\ref{lem:pi} we have, 
\EQ{
\lim_{t \to T_+} \| u(t) - u^* - m_{\De} \pi \|_{\E( r \ge \al(T_+ - t) )}  = 0,
}
which implies~\eqref{eq:radiation}. Finally, the above together with~\eqref{eq:decomp-0} and~\eqref{eq:lambda_N/ss} yield~\eqref{eq:d(t_n)}. 
\end{proof} 
 
 
\subsection{Sequential bubbling for global solutions} 

\begin{prop}[Sequential bubbling for global-in-time solutions]  \label{prop:seq-global}  Let $\ell, m \in \Z$. 
Let $u_0 \in \E_{\ell, m}$ and let $u(t)$ denote the solution to~\eqref{eq:hf} with initial data $u_0$. Suppose that $T_+(u_0) = \infty$. 
Then there exist $T_0>0$, an integer $N \ge 0$, a sequence of times $t_n \to \infty$, signs $\vec\iota \in \{-1, 1\}^N$, a sequence of scales $\vec \lam_n \in (0, \infty)^N$, and an error $g_n$ defined by 
\EQ{
u(t_n) = m \pi + \sum_{j =1}^N \iota_j( Q_{\lam_n} - \pi)  + g_n
}
with the following properties:
\begin{itemize} 
\item[(i)] the integer $N \ge 0 $ satisfies,  
\EQ{ \label{eq:energy-limit-global} 
\lim_{t \to \infty} E( u(t)) =  NE(Q); 
}
\item[(ii)] for every $\alpha>0$, 
\EQ{ \label{eq:ext-energy-global} 
\lim_{t \to \infty} E( u(t); \al \sqrt{t}, \infty) = 0, 
} 
and there exists $T_0>0$ and a function $ \rho: [T_0, \infty) \to (0, \infty)$ such that 
\EQ{ \label{eq:ext-E-global} 
\lim_{t \to \infty} \Big( \frac{\rho(t)}{ \sqrt{t}} + \| u(t) - m\pi \|_{\E( r \ge \rho(t))} \Big) = 0; 
}
\item[(iii)] the scales $\vec \lam_n$ and the sequence $g_n$ satisfy, 
\EQ{ \label{eq:global-seq} 
\lim_{ n \to \infty} \Big( \| g_n \|_{\E}^2 + \sum_{j =1}^N \Big( \frac{\lam_{n, j}}{\lam_{n, j+1}} \Big)^k  \Big)^{\frac{1}{2}}  = 0 
}
where here we adopt the convention that $\lam_{n, j+1} :=  t_n^{\frac{1}{2}}$. 
\end{itemize} 
\end{prop}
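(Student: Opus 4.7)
The plan is to follow the argument of Proposition~\ref{prop:seq-ftbu}, with the self-similar scale $\sqrt{t}$ replacing $\sqrt{T_+-t}$ and the constant $m\pi$ replacing the body map $u^*$. The main tools will be the energy identity~\eqref{eq:tension-L2}, the compactness Lemma~\ref{lem:compact}, the localized energy inequality~\eqref{eq:loc-en-ineq}, and Lemma~\ref{lem:pi}.

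\emph{Step 1 (selection of $t_n$ and initial decomposition).} First I would produce $t_n\to\infty$ with $t_n\|\partial_t u(t_n)\|_{L^2}^2\to 0$ by a dyadic pigeonhole on $[2^n,2^{n+1}]$ applied to~\eqref{eq:tension-L2}, equivalently $\sqrt{t_n}\|\calT(u(t_n))\|_{L^2}\to 0$. Lemma~\ref{lem:compact} with $\rho_n := \sqrt{t_n}$ then yields, along a subsequence, integers $N\ge 0$ and $m_0\in\Z$, signs $\vec\iota\in\{-1,1\}^N$, scales $\vec\lambda_n$ with $\lambda_{n,1}\ll\cdots\ll\lambda_{n,N}\le C_0\sqrt{t_n}$, and $R_n\to\infty$ such that $\|u(t_n) - \calQ(m_0,\vec\iota,\vec\lambda_n)\|_{\cE(r\le R_n\sqrt{t_n})}\to 0$.

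\emph{Step 2 (exterior energy \eqref{eq:ext-energy-global} and construction of $\rho(t)$).} Applying~\eqref{eq:loc-en-ineq} with the time-independent cutoff $\phi(r) = 1-\chi(r/R)$ (for which $\partial_t\phi = 0$ and $|\partial_r\phi|\le C/R$ on $R\le r\le 2R$) and bounding $\sqrt{E(u(t))}$ by $\sqrt{E(u_0)}$, I obtain
\begin{equation}
E(u(t_2); 2R, \infty) \lec E(u(t_1); R, \infty) + \frac{\sqrt{E(u_0)}\,(t_2-t_1)^{1/2}}{R}\Big(\int_{t_1}^{t_2}\|\partial_t u(t)\|_{L^2}^2\,\ud t\Big)^{1/2}.
\end{equation}
Fixing $\beta>0$, setting $R = \beta\sqrt{t_2}/2$ and $t_1 = T_0$, and sending $t_2\to\infty$, the first right-hand term vanishes since $u(T_0)$ has finite energy and $\beta\sqrt{t_2}/2\to\infty$; the second is controlled in the limit by $C\beta^{-1}\sqrt{E(u_0)}\big(\int_{T_0}^\infty\|\partial_t u\|_{L^2}^2\,\ud t\big)^{1/2}$, which tends to $0$ as $T_0\to\infty$. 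Thus $\lim_{t\to\infty}E(u(t);\beta\sqrt{t},\infty) = 0$ for every $\beta>0$. I would then construct $\rho(t)$ by choosing non-decreasing $T_n$ with $E(u(t); n^{-1}\sqrt{t},\infty)\le 1/n$ on $[T_n,\infty)$ and setting $\rho(t) := n^{-1}\sqrt{t}$ on $[T_n,T_{n+1})$; Lemma~\ref{lem:pi} then converts the smallness of the exterior nonlinear energy into $\|u(t)-m\pi\|_{\cE(\rho(t))}\to 0$ (the integer shift is forced to be $m$ since $\lim_{r\to\infty} u(t,r) = m\pi$).

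\emph{Step 3 (matching, removal of the outer scale, and conclusion).} In analogy with the finite-time case~\eqref{eq:local-ext-lim}--\eqref{eq:decomp-0}, comparing the exterior behavior of $u(t_n)$ (close to $m\pi$ by Step 2) with that of $\calQ(m_0,\vec\iota,\vec\lambda_n)$ (close to $m_0\pi$ for $r\gg\lambda_{n,N}$) near $r = R_n\sqrt{t_n}$ forces $m_0 = m$. To upgrade $\lambda_{n,N}\le C_0\sqrt{t_n}$ to $\lambda_{n,N}/\sqrt{t_n}\to 0$, I would argue by contradiction: if $\lambda_{n,N}/\sqrt{t_n}\to\gamma>0$ along a subsequence, then Lemma~\ref{lem:cross-term} renders the subdominant bubbles negligible and $\calQ$ carries energy $\ge E(Q)/2$ on $\gamma\sqrt{t_n}/A\le r\le A\gamma\sqrt{t_n}$ for $A$ large; by Step 1 this transfers to $u(t_n)$ and contradicts~\eqref{eq:ext-energy-global} taken with $\beta = \gamma/A$. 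Setting $g_n := u(t_n) - \calQ(m,\vec\iota,\vec\lambda_n)$, Step 1 gives $\|g_n\|_{\cE(r\le R_n\sqrt{t_n})}\to 0$, while on $r\ge R_n\sqrt{t_n}$ both $\|u(t_n)-m\pi\|_\cE\to 0$ (Step 2 plus Lemma~\ref{lem:pi}) and $\|\calQ-m\pi\|_\cE\lec\sum_j(\lambda_{n,j}/(R_n\sqrt{t_n}))^k\to 0$, establishing~\eqref{eq:global-seq}. Claim~(i) then follows by Taylor-expanding $E(u(t_n)) = E(\calQ) + O(\|g_n\|_\cE)$ via~\eqref{eq:mb-linear}, invoking Lemma~\ref{lem:M-bub-energy} for $E(\calQ) = NE(Q)+o(1)$, and combining with the monotonicity of $E(u(t))$.

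The main obstacle, and the only substantive departure from Proposition~\ref{prop:seq-ftbu}, will be Step 2: proving that the energy in the self-similar exterior $\{r \ge \beta\sqrt{t}\}$ vanishes as a \emph{continuous} limit in $t$. The key insight is that, although the ratio $(t_2-T_0)^{1/2}/\sqrt{t_2}$ remains only $O(1)$, the tail $\big(\int_{T_0}^\infty\|\partial_t u\|_{L^2}^2\,\ud t\big)^{1/2}$ can be made arbitrarily small by taking $T_0\to\infty$; this compensates for the lack of a shrinking time window that was available in the finite-time setting.
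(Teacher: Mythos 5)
Your proposal is correct and follows essentially the same route as the paper: select $t_n$ from the dissipation identity, apply Lemma~\ref{lem:compact} at scale $\sqrt{t_n}$, prove the continuous-in-time exterior energy decay via the localized energy inequality \eqref{eq:loc-en-ineq} with a time-independent cutoff at scale $\sim\alpha\sqrt{t}$ (with the $(t-T_0)^{1/2}/\sqrt{t}=O(1)$ factor compensated by the smallness of the dissipation tail), build $\rho(t)$ and invoke Lemma~\ref{lem:pi}, then combine the sequential decomposition with exterior decay to force $\lambda_{n,N}\ll\sqrt{t_n}$ and conclude via monotonicity of the energy. The extra details you supply (the contradiction argument for $\lambda_{n,N}/\sqrt{t_n}\to 0$, the identification $m_0=m$) are points the paper leaves implicit, and they are consistent with its argument.
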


%

\begin{proof} 
Let $u(t) \in \E_{\ell, m}$ be a heat flow defined globally in time. By~\eqref{eq:tension-L2} we can find a sequence $t_n \to \infty$ so that, 
\EQ{
 t_n^{\frac{1}{2}} \| \calT( u(t_n)) \|_{L^2}  \to 0 \mas n \to \infty. 
}
 We can now apply Lemma~\ref{lem:compact} with $\rho_n:=  t_n^{\frac{1}{2}}$, which yields $N \ge 0$, $m_0\in \Z$, $\vec \iota \in \{-1, 1\}^N, \vec \lam_n \in (0, \infty)^N$ such that after passing to a subsequence, we have 
\EQ{ \label{eq:decomp-global-0} 
\lim_{n \to \infty} \Big( \| u(t_n) - \calQ(m_0, \vec \iota, \vec \lam_n) \|_{\E( r \le A  t_n^{\frac{1}{2}})}^2 + \sum_{j =1}^{N-1} \Big( \frac{ \lam_{n, j}}{\lam_{n, j+1}} \Big)^{k} \Big) = 0
}
for each $A>0$, and moreover that $\lam_{n, N} \lesssim  t_n^{\frac{1}{2}}$.

Fix $\alpha>0$ and let $\eps >0$. By~\eqref{eq:tension-L2} and the fact that $E( u(0))< \infty$  we can find $T_0 = T_0(\eps) >0$ such that, 
\EQ{ \label{eq:T_0-choice} 
 \frac{4\sqrt{E(u(0))}}{\al}\Big(\int_{T_0}^\infty \int_0^\infty (\p_t u (t, r))^2 \, r \, \ud r \, \ud t\Big)^{\frac{1}{2}}  \le   \eps
}
Next, choose $T_1 \ge T_0$ so that 
\EQ{ \label{eq:T_1-choice} 
E( u(T_0); \al \sqrt{T}/4, \infty) \le \eps
}
for all $T \ge T_1$. Fixing any such $T$, we set 
\EQ{
\phi(t, r) = \phi_T(r)  = 1 - \chi( 4r/ \al \sqrt{T}) \mfor t \in [T_0, T]
}
where $\chi(r)$ is a smooth function on $(0, \infty)$ such that $\chi(r) = 1$ for $r \le 1$, $\chi(r) = 0$ if $r \ge 4$, and  $\abs{\chi'(r)} \le 1$ for all $r \in (0, \infty)$. Since $\frac{\ud }{\ud t} \phi(t, r) = 0$ for $t \in [T_0, T]$ it follows from~\eqref{eq:loc-en-ineq} that, 
\EQ{
\int_0^\infty \bfe(u(T, r)) \, \phi_T(r)^2 \, r \, \ud r  \le \int_0^\infty \bfe(u(T_0, r)) \, \phi_T(r)^2 \, r \, \ud r + \frac{4\sqrt{E(u(0))}}{\al}\Big(\int_{T_0}^T \int_0^\infty (\p_t u (t, r))^2 \, r \, \ud r \, \ud t\Big)^{\frac{1}{2}}
} 
Using the above together with~\eqref{eq:T_0-choice} and~\eqref{eq:T_1-choice}  we find that
\EQ{
E( u(T); \al \sqrt{T}, \infty) \le \eps.  
}
for all $T \ge T_1$, completing the proof of~\eqref{eq:ext-energy-global}. It follows from~\eqref{eq:ext-energy-global} that there exists $T_0>0$ and a function $\rho: [T_0, \infty) \to (0, \infty)$ with $\rho(t) \ll \sqrt t$ and  $\lim_{t \to \infty} E( u(t); \rho(t), \infty)  = 0$. Thus,~\eqref{eq:ext-E-global} is a consequence of Lemma~\ref{lem:pi}. 

Returning to the sequential decomposition wee see from~\eqref{eq:decomp-global-0}, the fact that $\lam_{n, N} \lesssim t_n^{\frac{1}{2}},$ and from~\eqref{eq:ext-energy-global} that we must have 
\EQ{
\lim_{n \to \infty} \frac{\lam_{n, N}}{t_n^{\frac{1}{2}}}  = 0. 
}
Then,~\eqref{eq:global-seq} follows from the above,~\eqref{eq:ext-E-global} and~\eqref{eq:decomp-global-0}. Moreover we see that $\lim_{n \to \infty} E( u(t_n))  =  N E(Q)$ and the continuous limit~\eqref{eq:energy-limit-global} then follows from the fact that $E( u(t))$ is non-increasing. 
\end{proof}

\section{Decomposition of the solution and collision intervals} \label{sec:decomposition}

For the remainder of the paper we fix a solution
$ u(t) \in \E_{\ell, m}$ of \eqref{eq:hf},
defined on the time interval $I_*=[0, T_*)$
where $T_* := T_+<\infty$ in the finite time blow-up case and $T_*= \infty$ in the global case. 
Let $ u^* \in \E_{0, m_{\infty}}$ be the body map as defined in Proposition~\ref{prop:seq-ftbu} and in the case of a global solution we adopt the convention that $u^* = 0$. 
Note that $m_\infty = 0$ if $T_* = \infty$. We let $m_\Delta$ be as in Proposition~\ref{prop:seq-ftbu} so that $u(t) \sim m_{\Delta} \pi  + u^*$ in the region $r \gtrsim (T_+ - t)^{\frac{1}{2}}$. To unify notation, we adopt the convention that $m_{\Delta} = m$ in the case of a global solution, so that we may again view $u(t) \sim m_{\Delta}  \pi + u^*$ in the region $r \gtrsim \sqrt{t}$. 
By Propositions~\ref{prop:seq-ftbu} and \ref{prop:seq-global} there exists an integer $N \ge0$ and a sequence of times $t_n \to T_*$ so that $u(t_n)- u^*$ approaches an $N$-bubble as $n \to \infty$. 

We define a localized distance to an $N$-bubble. 

\begin{defn}[Proximity to a multi-bubble]
\label{def:proximity}
For all $t \in I$, $\rho \in (0, \infty)$, and $K \in \{0, 1, \ldots, N\}$, we define
the \emph{localized multi-bubble proximity function} as
\begin{equation}
\bfd_K(t; \rho) := \inf_{\vec \iota, \vec\lam}\bigg( \|  u(t) - u^* - \calQ(m_{\Delta}, \vec\iota, \vec\lambda) \|_{\cE(\rho, \infty)}^2 + \sum_{j=K}^{N}\Big(\frac{ \lam_{j}}{\lam_{j+1}}\Big)^{k} \bigg)^{\frac{1}{2}},
\end{equation}
where $\vec\iota := (\iota_{K+1}, \ldots, \iota_N) \in \{-1, 1\}^{N-K}$, $\vec\lambda := (\lambda_{K+1}, \ldots, \lambda_N) \in (0, \infty)^{N-K}$, $\lambda_K := \rho$ and $\lambda_{N+1} := \sqrt{T_+ -t}$ in the finite time blow-up case and $\lambda_{N+1} := \sqrt{t}$ in the case of a global solution.

The \emph{multi-bubble proximity function} is defined by $\bfd(t) := \bfd_0(t; 0)$.
\end{defn}

\begin{rem} 
We emphasize that if $\bfd_K(t; \rho)$ is small, this means that $ u(t) - u^*$ is close to $N-K$ bubbles in the exterior region  $r \in (\rho, \infty)$. 
\end{rem} 
We can now rephrase a consequence of Propositions~\ref{prop:seq-ftbu} and~\ref{prop:seq-global} in this notation: there exists a monotone sequence $t_n \to T_*$ such that
\begin{equation}
\label{eq:dtn-conv}
\lim_{n \to \infty} \bfd(t_n) = 0.
\end{equation}

We state and prove some simple consequences of the set-up above.
We always assume $N \geq 1$, since Theorem~\ref{thm:main} in the case $N = 0$ is immediate from~\eqref{eq:energy-limit-global}. 

A direct consequence of~\eqref{eq:ext-E-global} is that $ u(t)$ always approaches a $0$-bubble in some exterior region. With $\rho_N(t) = \rho(t)$ given by the function in Proposition~\ref{prop:seq-ftbu} or~\ref{prop:seq-global} the following lemma is  immediate from the conventions of Definition~\ref{def:proximity}. 
\begin{lem}
\label{lem:conv-rhoN}
There exists $T_0>0$ and  function $\rho_N: [T_0, T_*) \to (0, \infty)$ such that
\begin{equation}
\label{eq:conv-rhoN}
\lim_{t\to T_*}\bfd_N(t; \rho_N(t)) = 0.
\end{equation}
\end{lem}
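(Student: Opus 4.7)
The plan is to take $\rho_N(t) := \rho(t)$, where $\rho(t)$ is the function provided by Proposition~\ref{prop:seq-ftbu} in the finite-time blow-up case and by Proposition~\ref{prop:seq-global} in the global case. The lemma will then follow essentially by unpacking the definition of $\bfd_N(t;\rho)$ when $K=N$ and matching it with the exterior-region convergence already extracted in those propositions.

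First I would specialize Definition~\ref{def:proximity} to $K=N$. The index set for $\vec\iota = (\iota_{N+1},\ldots,\iota_N)$ and $\vec\lambda = (\lambda_{N+1},\ldots,\lambda_N)$ is empty, so by the Remark following Definition~\ref{def:multi-bubble} we have $\calQ(m_\Delta,\vec\iota,\vec\lambda;r) \equiv m_\Delta\pi$. The sum $\sum_{j=N}^N (\lambda_j/\lambda_{j+1})^k$ reduces to the single term $(\rho/\lambda_{N+1}(t))^k$, where the convention of Definition~\ref{def:proximity} sets $\lambda_{N+1}(t) = \sqrt{T_+-t}$ in the finite-time case and $\lambda_{N+1}(t)=\sqrt{t}$ in the global case. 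Thus, taking the empty configuration as a trial in the infimum,
\begin{equation}
\bfd_N(t;\rho)^2 \;\leq\; \|u(t)-u^*-m_\Delta\pi\|_{\cE(\rho,\infty)}^2 \;+\; \Big(\frac{\rho}{\lambda_{N+1}(t)}\Big)^{k}.
\end{equation}

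Then I would set $\rho_N(t):=\rho(t)$ and check that both terms on the right vanish as $t\to T_*$. In the finite-time blow-up case this is exactly the content of~\eqref{eq:radiation} (using the convention $\cE(\rho)=\cE(\rho,\infty)$), which gives simultaneously $\rho(t)/\sqrt{T_+-t}\to 0$ and $\|u(t)-u^*-m_\Delta\pi\|_{\cE(\rho(t),\infty)}\to 0$; since $k\geq 1$, the power $(\rho(t)/\sqrt{T_+-t})^k$ also tends to $0$. In the global case, by our convention $u^*\equiv 0$ and $m_\Delta = m$, so $u(t)-u^*-m_\Delta\pi = u(t)-m\pi$, and~\eqref{eq:ext-E-global} gives exactly the two required limits.

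There is no substantive obstacle: the lemma is a direct rephrasing, in the language of the localized proximity function $\bfd_N$, of the exterior-region convergence already established in the sequential bubbling propositions. The only care needed is the bookkeeping for empty multi-bubble configurations and the conventions distinguishing the finite-time and global cases.
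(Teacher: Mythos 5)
Your proof is correct and is exactly the paper's argument: the paper takes $\rho_N(t) := \rho(t)$ from Proposition~\ref{prop:seq-ftbu} (resp.~\ref{prop:seq-global}) and observes the lemma is immediate from the conventions of Definition~\ref{def:proximity}, which is precisely the unpacking (empty configuration $\calQ = m_\Delta\pi$, single ratio $(\rho/\lambda_{N+1})^k$, then~\eqref{eq:radiation} or~\eqref{eq:ext-E-global}) that you carried out.
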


\subsection{Collision intervals} \label{ssec:collision} 

Theorem~\ref{thm:main} will follow from showing that, 
\begin{equation}
\label{eq:dt-conv}
\lim_{t \to T_*} \bfd(t) = 0.
\end{equation}
The approach which we adopt in order to prove~\eqref{eq:dt-conv} is to study colliding bubbles.
A collision is defined as follows.
\begin{defn}[Collision interval]
\label{def:collision}
Let $K \in \{0, 1, \ldots, N\}$. A compact time interval $[a, b] \subset I_*$ is a \emph{collision interval}
with parameters $0 < \epsilon < \eta$ and $N - K$ exterior bubbles if
\begin{itemize}
\item $\bfd(a) \leq \epsilon$ and $\bfd(b) \ge \eta$,
\item there exists a function $\rho_K: [a, b] \to (0, \infty)$ such that $\bfd_K(t; \rho_K(t)) \leq \epsilon$
for all $t \in [a, b]$.
\end{itemize}
In this case, we write $[a, b] \in \calC_K(\epsilon, \eta)$.
\end{defn}
\begin{defn}[Choice of $K$]
\label{def:K-choice}
We define $K$ as the \emph{smallest} nonnegative integer having the following property.
There exist $\eta > 0$, a decreasing sequence $\epsilon_n \to 0$,
and sequences $(a_n), (b_n)$ such that $[a_n, b_n] \in \calC_K(\epsilon_n, \eta)$ for all $n \in \{1, 2, \ldots\}$.
\end{defn}
\begin{lem}[Existence of $K \ge 1$]
\label{lem:K-exist}
If \eqref{eq:dt-conv} is false, then $K$ is well defined and $K \in \{1, \ldots, N\}$.
\end{lem}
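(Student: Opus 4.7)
The plan has three parts: construct candidate intervals from the failure of \eqref{eq:dt-conv} and the sequential bubbling, verify that $K=N$ satisfies the defining property (so $K$ is well-defined and $K \leq N$), and rule out $K = 0$ (so $K \geq 1$).

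Since \eqref{eq:dt-conv} is assumed to fail, there exist $\eta_0 > 0$ and $s_n \to T_*$ with $\bfd(s_n) \geq \eta_0$. Combining this with \eqref{eq:dtn-conv}, which provides $t_n \to T_*$ with $\bfd(t_n) \to 0$, interleaving the two sequences gives times $a_n < b_n \to T_*$ with $\bfd(a_n) \to 0$ and $\bfd(b_n) \geq \eta_0$. To see that $K = N$ satisfies the defining property, invoke Lemma~\ref{lem:conv-rhoN} to obtain $\rho_N$ with $\bfd_N(t; \rho_N(t)) \to 0$; then choosing $a_n$ sufficiently large and setting
\[
\epsilon_n := \max\bigl(\bfd(a_n),\; \sup_{t \in [a_n, b_n]} \bfd_N(t; \rho_N(t))\bigr)
\]
forces $\epsilon_n \to 0$, so that $[a_n, b_n] \in \calC_N(\epsilon_n, \eta_0)$ for all $n$ large. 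Hence $K$ is well-defined and $K \leq N$.

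The harder step is to rule out $K = 0$. Assume for contradiction that there exist $\eta > 0$, $\epsilon_n \to 0$, and intervals $[a_n, b_n] \in \calC_0(\epsilon_n, \eta)$ equipped with $\rho_0 \colon [a_n, b_n] \to (0, \infty)$ realizing $\bfd_0(t; \rho_0(t)) \leq \epsilon_n$. The plan is to upgrade the exterior bound $\bfd_0(b_n; \rho_0(b_n)) \leq \epsilon_n$ into a bound on the full distance $\bfd(b_n)$, contradicting $\bfd(b_n) \geq \eta$. Fix $n$ large and pick near-minimizers $\vec\iota_n, \vec\lambda_n$ (with $\lambda_{n, 0} := \rho_0(b_n)$ and $\lambda_{n, N+1}$ the self-similar scale). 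The scale-ratio constraints $(\lambda_{n, j}/\lambda_{n, j+1})^k \lesssim \epsilon_n^2$ for $j = 0, \dots, N$ imply $\rho_0(b_n) \ll \lambda_{n, 1} \ll \cdots \ll \lambda_{n, N} \ll \lambda_{n, N+1}$; in particular, $\rho_0(b_n) \to 0$ in the blow-up case and $\rho_0(b_n)/\sqrt{b_n} \to 0$ in the global case.

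The upgrade proceeds as follows. Writing $u(b_n) = u^* + \calQ(m_\Delta, \vec\iota_n, \vec\lambda_n) + g_n$ on $(\rho_0(b_n), \infty)$ with $\|g_n\|_{\cE(\rho_0(b_n), \infty)} \lesssim \epsilon_n$, expanding the exterior energy, and bounding the cross and interaction terms using Lemma~\ref{lem:cross-term} in the spirit of the derivation of Lemma~\ref{lem:M-bub-energy}, one obtains
\[
E(u(b_n); \rho_0(b_n), \infty) = N E(Q) + E(u^*; \rho_0(b_n), \infty) + o_n(1).
\]
The asymptotic energy formulas \eqref{eq:energy-limit} (blow-up case) or \eqref{eq:energy-limit-global} (global case) give $E(u(b_n)) = N E(Q) + E(u^*) + o_n(1)$, while $E(u^*; \rho_0(b_n), \infty) = E(u^*) + o_n(1)$ either because $\rho_0(b_n) \to 0$ (blow-up) or $u^* \equiv 0$ (global). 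Subtraction yields $E(u(b_n); 0, \rho_0(b_n)) \to 0$. Lemma~\ref{lem:pi} then shows that each of $u(b_n)$, $u^*$, and $\calQ(m_\Delta, \vec\iota_n, \vec\lambda_n)$ is close, in the $\cE(0, \rho_0(b_n))$ norm, to its boundary value at the origin, namely $\ell\pi$, $0$, and $(m_\Delta - \sum_j \iota_{n, j})\pi$, respectively. Matching these values at $r = \rho_0(b_n)$ forces the topological compatibility $\ell = m_\Delta - \sum_j \iota_{n, j}$, because any mismatch would require at least $\gtrsim E(Q)$ of interior energy, contradicting the previous step. Summing the three interior bounds yields $\|u(b_n) - u^* - \calQ(m_\Delta, \vec\iota_n, \vec\lambda_n)\|_{\cE(0, \rho_0(b_n))} = o_n(1)$, so $\bfd(b_n)^2 \leq \epsilon_n^2 + o_n(1) \to 0$, the desired contradiction. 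The main technical obstacle will be the energy bookkeeping inside the small ball $\{r \leq \rho_0(b_n)\}$ together with the ensuing forced topological compatibility, particularly in the finite-time blow-up case, where one must rely on Proposition~\ref{prop:seq-ftbu} to ensure all bubble scales are small compared to $\sqrt{T_+ - t}$.
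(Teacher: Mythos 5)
Your proposal is correct and follows essentially the same route as the paper: exhibit $K=N$ via Lemma~\ref{lem:conv-rhoN} together with \eqref{eq:dtn-conv}, then rule out $K=0$ by showing that exterior closeness plus the energy limits \eqref{eq:energy-limit}/\eqref{eq:energy-limit-global} force the energy on $(0,\rho_0(b_n))$ to vanish, whence Lemma~\ref{lem:pi} and \eqref{eq:H-E-comp} upgrade $\bfd_0(b_n;\rho_0(b_n))\le\epsilon_n$ to $\bfd(b_n)\to 0$, contradicting $\bfd(b_n)\ge\eta$. The only cosmetic differences are that the paper organizes the exterior energy expansion by splitting at the radiation scale $\rho(b_n)$ from \eqref{eq:radiation} and invoking Lemma~\ref{lem:M-bub-energy}, rather than estimating the cross terms directly, and it leaves implicit the topological matching at $r=\rho_0(b_n)$ that you spell out.
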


\begin{rem} 
The fact that $K \ge 1$ means that at least one bubble must lose its shape if~\eqref{eq:dt-conv} is false.
\end{rem} 

\begin{proof}[Proof of Lemma~\ref{lem:K-exist}]
Assume \eqref{eq:dt-conv} does not hold, so that there exist $\eta > 0$ and a monotone sequence $b_n \to T_*$ such that
\begin{equation}
\bfd(b_n) \geq \eta, \qquad\text{for all }n.
\end{equation}
We claim that there exist sequences $(\epsilon_n), (a_n)$ such that $[a_n, b_n] \in \calC_N(\epsilon_n, \eta)$.
Indeed, \eqref{eq:dtn-conv} implies that there exist $\epsilon_n \to 0$ and $a_n \leq b_n$ 
such that $\bfd(a_n) \leq \epsilon_n$. Note that $a_n \to T_*$ and $b_n \to T_*$.
Let $\rho_N: [a_n, b_n] \to (0, \infty)$ be the function given by Lemma~\ref{lem:conv-rhoN},
restricted to the time interval $[a_n, b_n]$.
Then \eqref{eq:conv-rhoN} yields
\begin{equation}
\lim_{n\to\infty}\sup_{t\in[a_n, b_n]}\bfd_N(t; \rho_N(t)) = 0.
\end{equation}
Upon adjusting the sequence $\epsilon_n$, we obtain that all the requirements of Definition~\ref{def:collision}
are satisfied for $K = N$.

We now prove that $K \geq 1$. Suppose $K = 0$. By  Definition~\ref{def:collision} of a collision interval, there exist $\eta > 0$, and sequences $a_n, b_n \to T_*$ and $\rho_0(b_n) \geq 0$ such that
$\bfd_0(b_n;  \rho_0(b_n)) \leq \epsilon_n$ and at the same time $\bfd(b_n) \geq \eta$. We show that this is impossible. 


Define $ v_n := u(b_n) - u^*$. 
Since $\bfd_0(b_n;  \rho_0(b_n)) \leq \epsilon_n$ we can find parameters, $\rho_0(b_n) \ll  \lam_{n, 1} \ll \dots \ll \lam_{n, N}$ and signs $\vec{\iota}_n$ such that defining $ { g}_n = v_n -  \calQ( m_\De, \vec{ \iota}_n, \vec{ \lam}_n)$ we have
\EQ{ \label{eq:tig-small} 
\bfd_0(c_n; \rho_0(b_n)) \simeq \| {g}_n \|_{\E( \rho_0(b_n), \infty)}^2 + \sum_{j =0}^N\Big( \frac{  \lam_{n, j}}{\lam_{n, j+1}}\Big)^k \lesssim \eps_n^2. 
}
If $T_*< \infty$, with $\rho(t)$ as in~\eqref{eq:radiation} we see that we must have $\lam_{n, N} \ll  \rho(b_n) \ll (T_*-b_n)^{\frac{1}{2}}$, and thus using~\eqref{eq:radiation} along with~\eqref{eq:tig-small} and Lemma~\ref{lem:M-bub-energy} we have
\EQ{
E(  u(b_n); \rho_0(b_n), \infty) &= E(  {g}_n + u^* +  \calQ( m_\De, \vec{ \iota}_n, \vec{ \lam}_n); \rho_0(b_n), \rho(b_n))  \\
&\quad + E(  { g}_n +  u^* +  \calQ( m_\De, \vec{\iota}_n, \vec{\lam}_n); \rho(b_n), \infty) \\
& = N E(  Q) + E( u^*)  + o_n(1) . 
}
A similar argument in the case $T_*= \infty$ shows that 
\EQ{
E(  u(b_n); \rho_0(b_n), \infty) & = N E(  Q)  + o_n(1) . 
}
Since by~\eqref{eq:energy-limit} and~\eqref{eq:energy-limit-global} we know that $\lim_{ n \to \infty} E( u(b_n)) = N E(  Q) + E(  u^*)$, we conclude from the previous line that, 
\EQ{
E(  u(b_n); 0, \rho_0(b_n)) = o_n(1) \mas n \to \infty. 
}
Using the fact that $\rho_0(b_n) \ll \rho(b_n)$ it follows that $E(  v_n; 0, \rho_0(b_n)) = o_n(1)$, and hence by~\eqref{eq:H-E-comp} we conclude that 
\EQ{
\| v_n - \ell  \pi \|_{\E( 0, \rho_0(b_n))} \lesssim E(  v_n; 0, \rho_0(b_n)) = o_n(1) \mas n \to \infty
}
Thus, combining the above with~\eqref{eq:tig-small} we have $\bfd(b_n) = o_n(1)$ as $n \to \infty$, a contradiction. 
\end{proof} 

\begin{rem} \label{rem:collision} 
For each collision interval  we may assume without loss of generality that $\bfd(a_n)  = \eps_n$,  $\bfd(b_n) = \eta$, and $\bfd(t) \in [\eps_n, \eta]$ for each $t \in [a_n, b_n]$. Indeed, given some initial choice of $[a_n, b_n] \in \calC_K( \eps_n, \eta)$, 
just set $a_n \le \ti a_n := \sup\{ t \in [a_n,  b_n] \mid \bfd(t) \le \eps_n \}$ and $\ti b_n := \inf\{t \in [\ti a_n, b_n] \mid \bfd(t) \ge \eta\}$. 

Similarly, given some initial choice $\eps_n \to 0, \eta>0$ and intervals $[a_n, b_n] \in \calC_K( \eta, \eps_n)$ we are free to ``enlarge'' $\eps_n$ or ``shrink'' $\eta>0$, by choosing some other sequence $\eps_n \le \ti \eps_n  \to 0$, and $0< \ti \eta \le \eta$, and new collision subintervals $[\ti a_n, \ti b_n]  \subset [a_n, b_n] \cap \calC_{K}(\ti \eta, \ti \eps_n)$ as in the previous paragraph. We will enlarge our initial choice of $\eps_n$ and shrink $\eta$ in this fashion  over the course of the proof. 
\end{rem} 

%

\subsection{Decomposition of the solution} 

\begin{lem}[Basic modulation] \label{lem:modulation} 
Let $K \ge 1$ be the number given by Lemma~\ref{lem:K-exist}. There exist $\eta>0$, a sequence $\eps_n \to 0$,  and sequences $a_n, b_n \to \infty$ satisfying the requirements of Definition~\ref{def:K-choice}, and such that $\bfd(a_n)  = \eps_n$, $\bfd(b_n) = \eta$ and $\bfd(t) \in [\eps_n, \eta]$ for all $t \in [a_n, b_n]$ and so that the following properties hold. There exist signs $\vec \iota \in \{-1, 1\}^N$,  a function $\vec \lambda = ( \lambda_1, \dots, \lam_N) \in C^1(\cup_{n \in \N} [a_n, b_n];  (0, \infty)^{N})$, sequences $\al_n \to 0$ and $\nu_n \to 0$,  such that defining the functions, 
\EQ{ \label{eq:nu-def} 
\nu:\cup_{n \in \N} [a_n, b_n] \to (0, \infty), \quad  \nu(t):= \nu_n \lam_{K+1}(t), \mfor \, \, t\in[a_n,b_n],
} 
\EQ{
\al \cup_{n \in \N} [a_n, b_n] \to (0, \infty), \quad  \al(t):= \begin{cases}  \al_n\sqrt{T_+ - t_n}\mif T_+<\infty \\ \al_n \sqrt{t} \mif T_+ = \infty \end{cases}, \mfor \, \, t\in[a_n,b_n], 
}
\EQ{ \label{eq:u^*(t)-def} 
u^*(t) :=  \begin{cases} (1 - \chi_{\al(t)}) \big( u(t) - m_{\Delta} \pi \big)\mif T_+<\infty \\ 0 \mif T_+= \infty \end{cases}  
}
and 
\EQ{
g: \cup_{n \in \N} [a_n, b_n] \to  \E; \quad g(t) := u(t) - u^*(t) - \calQ(m_\Delta, \vec \iota, \vec \lambda(t)), 
}
there hold,
\begin{itemize} 
\item[(i)] the orthogonality conditions, 
\EQ{ \label{eq:g-ortho} 
0 = \La \calZ_{\U{\lambda(t)}} \mid g(t) \Ra, \quad \forall \, t \in [a_n, b_n], \quad \forall n; 
}
\item[(ii)] and the estimates, 
\EQ{ \label{eq:nu-estimates} 
\lim_{n \to \infty} \sup_{t \in [a_n, b_n]} \Big(  \frac{\nu(t)}{ \lam_{K+1}(t)}  + \sum_{j=K+1}^{N-1}  \frac{\lam_{j}(t)}{\lam_{j+1}(t)}  + \frac{\lam_{N}(t)}{\al(t)}+ E( u(t); \frac{1}{4}{\nu(t)},  4 \nu(t))  \Big) = 0,
}
\EQ{  \label{eq:d-g-lam} 
C_0^{-1}\bfd(t)  \leq \| g(t) \|_{\cE} +  \sum_{j=1}^{N-1} \Big( \frac{ \lam_{j}(t)}{\lam_{j+1}(t)} \Big)^{\frac{k}{2}} 
\leq C_0\bfd(t), 
}
\EQ{ \label{eq:g-refined}
\| g(t) \|_{\E} + \sum_{j \not \in \calA} \Big( \frac{\lam_j}{\lam_{j+1}} \Big)^{\frac{k}{2}} \le C_0  \sum_{j  \in \calA} \Big( \frac{\lam_j}{\lam_{j+1}} \Big)^{\frac{k}{2}}
}
\EQ{ \label{eq:lambda'-bound} 
\abs{\lam_j'(t)} \le C_0 \frac{1}{\lam_j(t)}  \bfd(t), 
}
for all $t \in [a_n, b_n]$ and all $n \in \N$; 
\item[(iii)] for any sequence $s_n \in [a_n, b_n]$ and any sequence $R_n$ such that $\nu(s_n) \le R_n \ll \lam_{K+1}(s_n)$  if $K < N$ and $\nu(s_n) \le R_n \le \al(s_n)$  if $K=N$, then, 
\EQ{ \label{eq:N-K-bubbles} 
\lim_{n \to \infty} E( u(s_n); R_n, \infty) = (N-K) E(Q) + E( u^*). 
}
and, 
\EQ{ \label{eq:N-K-converge} 
\lim_{n \to \infty}  \Big( \| u(s_n) - u^*(s_n) - \calQ( m_{\De}, \iota_{K+1}, \dots, \iota_N, &\lam_{K+1}(s_n), \dots, \lam_{N}(s_n)) \|_{\E( r \ge R_n)} \\
& + \sum_{j=K+1}^N \Big( \frac{\lam_{j}(s_n)}{\lam_{j+1}(s_n)} \Big)^{\frac{k}{2}} \Big) = 0. 
}
\end{itemize} 
\end{lem}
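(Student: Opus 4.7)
The plan is to obtain the decomposition by applying the static modulation Lemma~\ref{lem:mod-static} to a truncation $v(t) := u(t) - u^*(t)$ which removes the body map contribution, then derive the $C^{1}$ regularity and sharp bound on $\vec\lambda'(t)$ from the heat equation, and finally use pigeonhole combined with the collision-interval hypothesis and the sequential bubbling of Propositions~\ref{prop:seq-ftbu} and~\ref{prop:seq-global} to select $\nu(t)$ and obtain (iii).

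\textbf{Setup.} I begin with sequences $[a_n, b_n] \in \calC_K(\eps_n, \eta)$ from Definition~\ref{def:K-choice}. By Remark~\ref{rem:collision} I shrink $\eta$ and trim the intervals so that $\bfd(a_n) = \eps_n$, $\bfd(b_n) = \eta$, $\bfd(t) \in [\eps_n,\eta]$, with $\eta$ small enough that Lemmas~\ref{lem:mod-static} and~\ref{lem:bub-config} apply. In the finite-time case Proposition~\ref{prop:seq-ftbu} supplies $u^*$, $m_\Delta$, and a curve $\rho(t) \ll \sqrt{T_+-t}$ with $\|u(t) - u^* - m_\Delta\pi\|_{\cE(\rho(t))}\to 0$; I choose $\alpha_n \to 0$ so slowly that the scale $\alpha(t)$ from the statement satisfies $\rho(t) \ll \alpha(t) \ll \sqrt{T_+-t}$ uniformly over $[a_n,b_n]$. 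This forces $\|u^*(t) - u^*\|_{\cE}\to 0$ uniformly and guarantees that $v(t) \in \cE_{\ell, m_\Delta}$ agrees with $u(t)$ on $r \leq \alpha(t)$. In the global case I take $u^*(t)\equiv 0$.

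\textbf{Static modulation and fixed signs.} For each $t$ I apply Lemma~\ref{lem:mod-static} to $v(t)$ to extract $\vec\iota(t), \vec\lambda(t), g(t)$ satisfying \eqref{eq:g-ortho}, \eqref{eq:d-g-lam}, and \eqref{eq:g-refined}. Uniqueness in Lemma~\ref{lem:mod-static}, continuity of $t \mapsto v(t) \in \cE$ (from regularity of the flow on the open interval), and the discreteness of $\{-1,1\}^N$ together force $\vec\iota(t)$ to be constant on each $[a_n, b_n]$. Independence of $\vec\iota$ from $n$ is achieved by passing to a subsequence: the sequential bubbling at $t=a_n$ (where $\bfd(a_n) = \eps_n \to 0$) together with Lemma~\ref{lem:bub-config} pins the signs to those of the limiting $N$-bubble configuration. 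The $C^{1}$ regularity of $\vec\lambda$ then follows from the implicit function theorem applied to the orthogonality conditions; the relevant Jacobian is invertible thanks to \eqref{eq:ZQ}, the interaction bounds of Corollary~\ref{cor:ZQ}, and the coercivity of $\uD^{2} E$ from Lemma~\ref{lem:D2E-coerce}.

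\textbf{Dynamical bound on $\vec\lambda'$ and construction of $\nu(t)$.} Differentiating \eqref{eq:g-ortho} in $t$, using $\p_t u = \calT(u)$, and noting that the time derivative of the cutoff defining $u^*(t)$ is supported near $r \simeq \alpha(t)$ (far from every bubble), I obtain a linear system
\begin{equation*}
\sum_{i=1}^{N} M_{ji}(t)\,\lambda_i'(t)/\lambda_i(t) = F_j(t),\qquad j = 1,\dots,N,
\end{equation*}
in which $M(t)$ is a perturbation of a diagonal matrix with nonzero entries $\langle \calZ \mid \Lambda Q\rangle$ (off-diagonals controlled by Corollary~\ref{cor:ZQ}) and $|F_j(t)| \lesssim \bfd(t)$ follows from the orthogonality, the smallness of $g$, and the localized coercivity of Lemma~\ref{l:loc-coerce}. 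Inverting $M(t)$ and combining with \eqref{eq:d-g-lam} yields \eqref{eq:lambda'-bound}. The scale $\nu(t) = \nu_n \lambda_{K+1}(t)$ required in \eqref{eq:nu-estimates} is produced by pigeonholing the dyadic annular energies $E(u(t); 2^{-j-1}\lambda_{K+1}(t), 2^{-j}\lambda_{K+1}(t))$ over a window $1 \le j \le J_n$ with $J_n \to \infty$ chosen small enough that $2^{-J_n} \gg \lambda_K(t)/\lambda_{K+1}(t)$ uniformly on $[a_n, b_n]$; the uniform bound on the total energy forces some scale in the window to carry vanishing energy. Finally, part (iii) follows by combining the modulation \eqref{eq:d-g-lam} at $s_n$ (which controls the decomposition on $r \gtrsim \lambda_{K+1}(s_n)$), the annular vanishing at $r \simeq \nu(s_n)$, the collision-interval bound $\bfd_K(s_n;\rho_K(s_n)) \le \eps_n$ (which, via Lemma~\ref{lem:bub-config}, identifies the exterior bubble parameters with $\lambda_{K+1}(s_n),\dots,\lambda_N(s_n)$), and the energy limits \eqref{eq:energy-limit}/\eqref{eq:energy-limit-global}.

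\textbf{Main obstacle.} The technical heart of the argument is deriving \eqref{eq:lambda'-bound} with the sharp $\lambda_j(t)^{-1}$ prefactor. One must carefully control off-diagonal interaction entries in the modulation matrix using the cross-scale estimates of Corollary~\ref{cor:ZQ}, absorb the $t$-dependence of the cutoff defining $u^*(t)$ (whose time derivative is concentrated on the shell $r \simeq \alpha(t)$ and must be shown, via the localized energy inequality \eqref{eq:loc-en-ineq}, to contribute only a lower-order term), and verify that the nonlinear error $f(\calQ + g) - f(\calQ) - f'(\calQ)g$ tested against $\calZ_{\underline{\lambda_j}}$ is $O(\|g\|_{\cE}^{2})$, hence subleading relative to the leading $\bfd(t)$-term on the right-hand side.
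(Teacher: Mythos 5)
Your overall scheme --- truncating at the scale $\al(t)$ to define $u^*(t)$, applying the static modulation Lemma~\ref{lem:mod-static} to $u(t)-u^*(t)$, fixing the signs by continuity and a subsequence, and deriving \eqref{eq:lambda'-bound} by differentiating the orthogonality conditions and inverting a diagonally dominant system --- is essentially the paper's. The genuine gap is in your construction of $\nu(t)$, and consequently in the uniform-in-$t$ statements \eqref{eq:nu-estimates} and \eqref{eq:N-K-converge}. You propose to find $\nu_n$ by pigeonholing dyadic annular energies at scales between $\lam_K(t)$ and $\lam_{K+1}(t)$. This fails on two counts. First, the pigeonhole at a fixed time $t$ only produces a $t$-dependent good annulus, whereas \eqref{eq:nu-estimates} demands a single number $\nu_n$ per interval such that $E(u(t);\tfrac14\nu_n\lam_{K+1}(t),4\nu_n\lam_{K+1}(t))$ is small \emph{simultaneously for all} $t\in[a_n,b_n]$; counting bad annuli separately at each $t$ cannot prevent the bad scales from sweeping out the entire window as $t$ varies over the (possibly long) collision interval, and you invoke no mechanism (monotonicity, slow variation, energy propagation) that would rigidify the choice in $t$. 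Second, your window condition $2^{-J_n}\gg\lam_K(t)/\lam_{K+1}(t)$ with $J_n\to\infty$ presupposes $\lam_K(t)/\lam_{K+1}(t)\to0$ uniformly on $[a_n,b_n]$; but \eqref{eq:d-g-lam} only gives $\lam_K(t)/\lam_{K+1}(t)\lesssim\eta^{2/k}$, a fixed small constant, so this input is not available at that stage --- in the paper it is itself a by-product of the collision-interval analysis you are trying to bypass.

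The correct mechanism, which your sketch does not use for this step, is the defining property of the collision interval: $\bfd_K(t;\rho_K(t))\le\eps_n$ for \emph{every} $t\in[a_n,b_n]$, so above $\rho_K(t)$ the map $u(t)-u^*$ is $\eps_n$-close to an $(N-K)$-bubble $\calQ(m_\De,\vec\sigma(t),\vec\mu(t))$ with $\rho_K(t)/\mu_{K+1}(t)\le\eps_n^{2/k}$. Any annulus at relative scale $\nu_n$ with $\eps_n^{2/k}\le\nu_n\ll1$ then carries $o_n(1)$ energy \emph{uniformly in} $t$, since the exterior bubbles have negligible energy there and the error is $O(\eps_n)$ uniformly; Lemma~\ref{lem:bub-config} (applied to a suitable truncation) then identifies $\mu_j(t)$ with $\lam_j(t)$ for $j\ge K+1$, which is also what upgrades the $O(\eta)$ control of \eqref{eq:d-g-lam} to the $o_n(1)$ convergence in \eqref{eq:N-K-converge}: your appeal to \eqref{eq:d-g-lam} alone for part (iii) would only yield errors of size $\eta$, not vanishing ones. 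Two smaller points: the bound $|F_j(t)|\lesssim\lam_j(t)^{-1}\bfd(t)$ in your modulation system does not come from the coercivity Lemma~\ref{l:loc-coerce}, but from crude estimates of the form $|\La\LL_{\calQ}g\mid\calZ_{\U{\lam_j}}\Ra|\lesssim\lam_j^{-1}\|g\|_{\E}$, the $L^1$ interaction bound $\lesssim\sum_j(\lam_j/\lam_{j+1})^k$, the quadratic bound $\lesssim\|g\|_{\E}^2$, and an $o_n(1)$ estimate for the cutoff term, followed by enlarging $\eps_n$; and to invoke \eqref{eq:g-bound-A} for \eqref{eq:g-refined} you must first verify the energy hypothesis $E(u(t)-u^*(t))\le NE(Q)+o_n(1)$ uniformly on $[a_n,b_n]$, which requires \eqref{eq:N-bubbles-bu} and \eqref{eq:en-ext-bu} and is not addressed in your outline.
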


%
%
%
%

\begin{rem} 
One should think of $\nu(t)$ as the scale that separates the $N-K$ ``exterior'' bubbles, which stay coherent on the union of the collision intervals $[a_n, b_n]$  from the $K$ ``interior'' bubbles that are coherent at the left endpoint $[a_n, b_n]$, but come into collision inside the interval and lose their shape. In the case $K =N$, there are no exterior bubbles, we set $\lam_{K+1}(t) := \sqrt{T_+ - t}$ and $\nu_n \to 0$ is chosen using~\eqref{eq:radiation} in the blow up case, and  $\lam_{K+1}(t) := \sqrt{t}$ and $\nu_n \to 0$ is chosen using~\eqref{eq:ext-E-global} in the global case. 
\end{rem}

\begin{proof}[Proof of Lemma~\ref{lem:modulation}] We carry out the argument in the case $T_+< \infty$,  and note that the global case is similar, and in fact, slightly less involved since $u^*= 0$ in that case. 
Let $a_n,  b_n, \eps_n,  \eta$, and $K\in \{1, \dots, N\}$  be some initial choice of parameters given by Definition~\ref{def:K-choice} and Lemma~\ref{lem:K-exist}. Over the course of the proof we will shrink $\eta$ and enlarge $\eps_n$ as in Remark~\ref{rem:collision}, but abuse notation by still denoting the resulting subintervals by $[a_n, b_n]$ after these modifications.

We first define the function $\al(t)$ and choose the sequence $\nu_n \to 0$. By Defintion~\ref{def:proximity}, for each $n$ we can find scales $\rho_K(t) \ll  \mu_{K+1}(t)  \ll \dots \ll   \mu_{N}(t) \ll (T_+-t)^{\frac{1}{2}} $ and signs $\vec \s(t) \in \{-1, 1\}^{N-K}$  for $t \in [ a_n,   b_n]$, such that defining $ h_{\rho_K}(t)$ for $r \in ( \rho_K(t), \infty)$ by 
\EQ{
u(t) - u^* = \calQ (m_{\De}, \vec \s(t), \vec{ \mu}(t)) + h_{\rho_K}(t) 
}
we have, 
\EQ{ \label{eq:timu-small} 
\bfd(t; \rho_K(t)) \simeq \| h_{\rho_K}(t) \|_{\E( \rho_K(t), \infty)}^2 + \sum_{j =K}^N \Big( \frac{ \mu_{j}(t)}{  \mu_{j+1}(t)} \Big)^k  \lesssim \eps_n^2 , 
}
keeping the convention $\mu_K(t) := \rho_K(t),  \mu_{N+1}(t) :=(T_+-t)^{\frac{1}{2}}$. 
Using $\lim_{n \to \infty} \sup_{t \in [a_n, b_n]}\bfd_K( t; \rho_K(t)) = 0$ and the fact that 
\EQ{ \label{eq:ext-bubble-en} 
\lim_{n \to \infty} \sup_{t \in [a_n,  b_n]}E( \calQ (m_{\De}, \vec \sigma(t), \vec{ \mu}(t)) ; \nu_{n, 1}  \ti \mu_{K+1}(t), \nu_{n, 2} \ti \mu_{K+1}(t)) = 0, 
}
for any two sequence $\nu_{n, 1}  \ll \nu_{n, 2} \ll 1$, 
we can choose a sequence $\nu_n \to 0$ such that for any $A>1$, 
\EQ{ \label{eq:numu} 
\rho_{K}(t) \le \nu_n \mu_{K+1}(t), \mand  \lim_{n \to \infty}\sup_{t \in [a_n, b_n]} E( u(t) -  u^*; \frac{1}{A}\nu_n  \mu_{K+1}(t), A \nu_n    \mu_{K+1}(t)) =  0. 
}
Next, letting $\rho(t)$ be as in~\eqref{eq:radiation}, we can use \eqref{eq:timu-small} to choose $\al_n \to 0$ to be a sequence such that, 
\EQ{ \label{eq:alpha-def} 
\lim_{n \to \infty} \sup_{t \in [a_n, b_n]} \Big( \frac{\mu_{N}(t)}{ \al_n (T_+ - t)^{\frac{1}{2}}} +  \frac{\rho(t)}{\al_n (T_+ - t)^{\frac{1}{2}}} \Big)  = 0,
}
and we define $\al(t) := \al_n (T_+ - t)^{\frac{1}{2}}$ for $t \in [a_n, b_n]$. If $K=N$ we may assume that $\al_n \ge \nu_n$. Setting, 
\EQ{ \label{eq:u^*(t)-def-1} 
u^*(t):= (1 - \chi_{\al(t)}) \big( u(t) - m_{\Delta} \pi \big)
}
we see from~\eqref{eq:radiation} and the fact that $\lim_{t \to T_+} E( u^*; \gamma(t))  = 0$ for any $\gamma(t) \to 0$ as $t \to T_+$, that 
\EQ{ \label{eq:u^*(t)-u^*} 
\lim_{t \to T_+}\| u^*(t) - u^*\|_{\E} = 0, 
} 
and by definition, 
\EQ{
u(t) - u^*(t) = \chi_{\al(t)} u(t) + (1- \chi_{\al(t)}) m_{\De} \pi 
}
and by~\eqref{eq:en-ext-bu} and~\eqref{eq:N-bubbles-bu}  we have, 
\EQ{ \label{eq:N-bubbles} 
\lim_{n \to \infty} \sup_{t \in [a_n, b_n]} \big| E( u(t) - u^*(t)) - NE(Q)\big| = 0
}

Now that $u^*(t)$ is defined, we find the parameters $\vec \iota \in \{-1, 1\}^N$ and $\vec \lambda(t) \in (0, \infty)^N$. 
By the definition of $\bfd(t)$ we make an initial choice of signs $\vec{\ti \iota}(t) \in \{-1, 1\}$ and scales $\vec{\ti \lambda}(t) \in (0, \infty)^N$ such that defining 
\EQ{ \label{eq:tig} 
\ti g(t) := u(t) - u^*  - \calQ( m_{\De}, \vec{\ti \iota}(t), \vec{\ti \lambda}(t))
}
we have, 
\EQ{ \label{eq:ti-lambda} 
\bfd(t) \le \| \ti g(t) \|_{\E} + \sum_{j = 1}^{N} \Big( \frac{\ti \lam_{j}(t)}{ \ti \lam_{j+1}(t)} \Big)^{\frac{k}{2}} \le 2 \bfd(t) \le 2 \eta
}
keeping the convention that $\lam_{N+1}(t) = (T_+-t)^{\frac{1}{2}}$. 

By~\eqref{eq:u^*(t)-u^*}~\eqref{eq:tig}, and~\eqref{eq:ti-lambda} we see that $\bfd(t) \le \eta$ implies that 
\EQ{ \label{eq:d-d-comp}
\bfd_{m_{\Delta}, N}(u(t) - u^*(t)) \le C_0 \bfd(t) + o_n(1) \le 2 C_0 \eta,
}
where $\bfd_{m_{\Delta}, N}$ is as in~\eqref{eq:d-def} and $o_n(1)$ denotes a term that tends to zero as $n \to \infty$. 
We may then shrink $\eta>0$ as in Remark~\ref{rem:collision} small enough so that we can apply Lemma~\ref{lem:mod-static} to $u(t) - u^*(t)$, obtaining $\vec \lambda(t) \in (0, \infty)^N$ defined on $\cup_n [a_n, b_n]$, and signs  $\vec \iota \in \{-1, 1\}^N$ (which can be taken independent of $t \in [a_n, b_n]$ using continuity of the flow and independently of $n$ after passing to a subsequence of the $[a_n, b_n]$), and $g(t)$ so that 
\EQ{ \label{eq:g-def} 
u(t) - u^*(t) =  m_\Delta \pi + \sum_{j=1}^N \iota_{ j} (Q_{\lam_{j}(t)} - \pi) + g(t),  \quad \La \calZ_{\U{\lambda(t)}} \mid g(t)\Ra = 0, \quad \forall t \in [a_n, b_n], 
}
and, 
\EQ{
\bfd_{m_{\Delta}, N}(u(t) - u^*(t))) \le \| g(t) \|_{\E} + \sum_{j=1}^{N-1} \Big( \frac{\lam_j(t)}{\lam_{j+1}(t)} \Big)^{\frac{k}{2}} \le C_0 \bfd_{m_{\Delta}, N}(u(t) - u^*(t)) \\
}
Using again~\eqref{eq:u^*(t)-u^*} along with~\eqref{eq:d-d-comp} we see that in fact, 
\EQ{
\bfd(t) - \zeta_{1, n}  \le \| g(t) \|_{\E} + \sum_{j=1}^N \Big( \frac{\lam_j(t)}{\lam_{j+1}(t)} \Big)^{\frac{k}{2}} \le C_0 \bfd(t) +  \zeta_{1, n} 
}
where $\zeta_{1, n}$ is a sequence tending to zero as $n \to \infty$. 
By enlarging $\eps_n$ so that $\eps_n \ge 2 \zeta_{1, n}$ for all $n$ as in Remark~\ref{rem:collision} we prove~\eqref{eq:d-g-lam}.

Next, we compare the scales $\lam_{K+1}, \dots, \lam_{N}$ to $\mu_{K+1}, \dots, \mu_N$. 
Denoting by $\ti \nu(t) := \nu_n \mu_{K+1}(t)$ we claim that for each $j = 1, \dots, N$, 
\EQ{ \label{eq:tinu-lambda} 
\lim_{ n \to \infty} \sup_{t \in [a_n, b_n]} \Big( \frac{\ti \nu(t)}{ \lam_j(t)} + \frac{\lam_j(t)}{\ti \nu(t)} \Big) = 0. 
}
If not, we could find $C>0$, $j \in \{1, \dots, N\}$, a subsequence of the $[a_n, b_n]$ and   a sequence $s_n \in [a_n, b_n]$ such that
\EQ{
C^{-1} \ti \nu(s_n) \le \lam_j(s_n) \le C \ti \nu(s_n)
} 
By~\eqref{eq:d-g-lam} for all $\eta>0$ sufficiently small we can find $\de = \de(\eta), R= R(\eta)>0$ so that  for all $n$, 
\EQ{
\de \le E( u(s_n) - u^*(s_n); R^{-1} \lam_j(s_n), R \lam_j(s_n))  \le E( u(s_n) - u^*(s_n); C^{-1}R^{-1} \ti \nu(s_n), RC \ti \nu(s_n))
}
which contradicts~\eqref{eq:numu}. 

By~\eqref{eq:numu} and Lemma~\ref{lem:pi} we can find integers $m_n$ so that denoting 
\EQ{
w(t) = m_n \pi \chi_{\ti \nu(t)} + ( 1- \chi_{\ti \nu(t)})( u(t) - u^*(t)) 
}
we have, 
\EQ{ \label{eq:exterior-bubbles} 
\| w(t) - \calQ(m_\De, \vec \sigma(t), \vec \mu(t)) \|_{\E}^2 + \sum_{j=K+1}^{N-1} \Big(\frac{ \mu_{j}(t)}{\mu_{j+1}(t)} \Big)^k = o_n(1)
}
On the other hand, by~\eqref{eq:tinu-lambda} we can find $j_0 \in \{1, \dots, N-1\}$ so that 
\EQ{
\| w(t) - \calQ(m_\De, \iota_{j_0}, \dots, \iota_{N}, \lambda_{j_0}(t), \dots, \lambda_{N}(t)) \|_{\E}^2 + \sum_{j=j_0}^{N-1} \Big(\frac{ \lambda_{j}(t)}{\lambda_{j+1}(t)} \Big)^k \le C_0 \eta
}
An application of Lemma~\ref{lem:bub-config} yields $j_0 = K+1$, $\vec\sigma(t) = \{\iota_{K+1}, \dots, \iota_K\}$  and moreover, by shrinking $\eta>0$, we can ensure that 
\EQ{
\sup_{t \in [a_n, b_n]}  \Big| \frac{\lam_j(t)}{\mu_j(t)} - 1 \Big| \le \frac{1}{4} 
}
and thus, defining $\nu(t) := \nu_n \lam_{K+1}(t)$ we see that~\eqref{eq:nu-estimates} follows from~\eqref{eq:timu-small}~\eqref{eq:numu}, and~\eqref{eq:alpha-def}. Let $s_n\in [a_n, b_n]$ and $R_n$ so that $\nu(s_n) \le R_n \ll \lam_{K+1}(s_n)$.   If $K < N$ then $R_n \ll \al(s_n)$, thus, using~\eqref{eq:exterior-bubbles} and~\eqref{eq:alpha-def}, we see that 
\EQ{
E( u(s_n); R_n, \al(s_n))  \to (N-K) E(Q) \mas n \to \infty
}
Since by~\eqref{eq:alpha-def},~\eqref{eq:u^*(t)-def-1} and~\eqref{eq:u^*(t)-u^*}, 
\EQ{
E( u(s_n); \al(s_n), \infty)  \to E( u^*) \mas n \to \infty
}
we see that~\eqref{eq:N-K-bubbles} follows.  If $K =N$ then $E( u(s_n); R_n, \infty) \to E( u^*)$. Similarly $N-K$ converge now follows from~\eqref{eq:exterior-bubbles}. 

Next we prove~\eqref{eq:g-refined}. An application of~\eqref{eq:g-bound-A} together with~\eqref{eq:N-bubbles} gives, 
\EQ{
\| g(t) \|_{\E} + \sum_{j \not \in \calA} \Big( \frac{\lam_j(t)}{\lam_{j+1}(t)} \Big)^{\frac{k}{2}} \le C_0  \sum_{j  \in \calA} \Big( \frac{\lam_j(t)}{\lam_{j+1}(t)} \Big)^{\frac{k}{2}} + \zeta_{2, n} 
}
for some sequence $\zeta_{2, n} \to 0$, which is independent of $t \in [a_n, b_n]$. But then by enlarging $\eps_n \to 0$ as in Remark~\ref{rem:collision} so that $\eps_n \gg \zeta_{2, n}$ we obtain~\eqref{eq:g-refined} via the above and~\eqref{eq:d-g-lam}. 

Lastly, we prove the modulation estimate~\eqref{eq:lambda'-bound}. Differentiating in time the orthogonality conditions~\eqref{eq:g-ortho} yields, for each $j = 1, \dots, N$, the identity, 
\EQ{ \label{eq:diff-ortho} 
\La \p_t g \mid \calZ_{\U{\lam_j}} \Ra = \frac{\lam_j'}{\lam_j} \La \U \calZ_{\U{\lam}} \mid g \Ra
}
Next, differentiating in time the expression for $g(t)$ in~\eqref{eq:g-def}  and recalling the definition of $u^*(t)$ gives, 
\EQ{
\p_t g &= \p_t  \chi_{\al} - \frac{\al'}{\al} \Lam \chi_{\al} ( u(t) - m_{\De} \pi)  + \sum_{j=1}^N \iota_j \lam_j' \Lam Q_{\U{\lam_j}} \\
& =  (\De u) \chi_\alpha - \frac{k^2}{r^2} f(u) \chi_\alpha  - \frac{\al'}{\al} \Lam \chi_{\al} ( u(t) - m_{\De} \pi)  + \sum_{j=1}^N \iota_j \lam_j' \Lam Q_{\U{\lam_j}}\\
& =  \De (\chi_\al u + (1- \chi_\al) m_{\De} \pi) - \frac{k^2}{r^2}f\big(  \chi_\al u + (1- \chi_\al) m_{\De} \pi \big)+ \sum_{j=1}^N \iota_j \lam_j' \Lam Q_{\U{\lam_j}}\\
&\quad - ( u - m_\De \pi) \De \chi_\alpha - 2 \p_r u \p_r \chi_{\alpha} - \frac{\al'}{\al} \Lam \chi_{\al} ( u(t) - m_{\De} \pi)  \\
&\quad - \frac{k^2}{r^2}\Big( f(u) \chi_\alpha - f(  \chi_\al u + (1- \chi_\al) m_{\De} \pi)\Big) , 
}
and we see that 
\EQ{ \label{eq:g-eq} 
\p_t g =  - \LL_{\calQ}  g  + \sum_{j=1}^N \iota_j \lam_j' \Lam Q_{\U{\lam_j}} + f_{\bfi}( m_\De, \vec \iota, \vec \lam) + f_{\bfq}(m_\De, \vec \iota, \vec \lam, g)
 + \phi(u, \al) 
}
where 
\EQ{
\phi(u, \al) &:= - ( u - m_\De \pi) \De \chi_\alpha - 2 \p_r u \p_r \chi_{\alpha} - \frac{\al'}{\al} \Lam \chi_{\al} ( u(t) - m_{\De} \pi)  \\
&\quad - \frac{k^2}{r^2}\Big( f(u) \chi_\alpha - f(  \chi_\al u + (1- \chi_\al) m_{\De} \pi)\Big)
}
and 
\EQ{
f_{\bfi}( m_\De, \vec \iota, \vec \lam) &:=- \uD E( \calQ(m, \vec \iota, \vec \lam))=  - \frac{k^2}{r^2} \Big( f\big( \calQ(m_\De, \vec \iota, \vec \lam)\big) - \sum_{j =1}^{N} \iota_j  f(Q_{\lam_{j}}) \Big) \\
f_{\bfq}(m_\De, \vec \iota, \vec \lam, g) &:= - \frac{k^2}{r^2} \Big( f\big( \calQ(m_\De, \vec \iota, \vec \lam) + g \big)  - f\big(\calQ(m_\De, \vec \iota, \vec \lam) \big) -  f'\big( \calQ(m_\De, \vec \iota, \vec \lam)\big) g \Big). 
}
The subscript  $\bfi$ above stands for ``interaction'' and $\bfq$ stands for ``quadratic.'' 

We make use of the estimates, 
\EQ{ \label{eq:fi-fq} 
\| f_{\bfi}( m_\De, \vec \iota, \vec \lam) \|_{L^1} \lesssim \sum_{j=1}^{N-1}\Big( \frac{\lam_j}{\lam_{j+1}} \Big)^k , \quad \| f_{\bfq}(m_\De, \vec \iota, \vec \lam, g) \|_{L^1} \lesssim  \| g \|_{\E}^2
}
For the $f_{\bfi}$ estimate we expand 
to obtain the expression, 
\EQ{ 
\frac{r^2}{k^2} \uD E ( \calQ(m, \vec \iota, \vec \lam))&= \frac{1}{2} \sin (2\sum_{ i  =2}^M  \iota_i Q_{\lam_i} + 2 \iota_1 Q_{\lam_1})  - \frac{1}{2}\sum_{ i = 1}^M \iota_i \sin 2 Q_{\lam_i} \\
&= - \sin \big(2\sum_{ i =2}^M  \iota_i Q_{\lam_i}\big) \sin^2 Q_{\lam_1} -  \iota_1\sin^2 \big(\sum_{ i =2}^M  \iota_i Q_{\lam_i}\big) \sin 2 Q_{\lam_1}   \\
&\quad + \frac{1}{2} \sin (2\sum_{ i =2}^M  \iota_i Q_{\lam_i})  - \frac{1}{2} \sum_{i =2}^M \iota_i \sin 2 Q_{\lam_i} 
}
Iterating this expansion in the last line above and using the identity $k\sin Q = \Lam Q$ we obtain the pointwise estimates, 
\EQ{ \label{eq:DE-bound} 
|\uD E( \calQ(m, \vec \iota, \vec \lam))| 
&\lesssim \frac{1}{r^2} \sum_{i,j, \ell\, \, \textrm{not all equal}} \Lam Q_{\lam_i} \Lam Q_{\lam_j} \Lam Q_{\lam_\ell} 
}
from which the estimate for $f_{\bfi}$ in~\eqref{eq:fi-fq} follows by way of Lemma~\ref{lem:cross-term}. The estimate for $f_{\bfq}$ in~\eqref{eq:fi-fq} is straightforward.  

For each $j \in \{1, \dots, N\}$ we pair~\eqref{eq:g-eq} with $\calZ_{\U{\lam_j}}$ and use~\eqref{eq:diff-ortho} to obtain the following system 
\begin{multline} 
\iota_j \lam_j' \Big( \La \Lam Q \mid \calZ \Ra - \frac{\iota_j}{\lam_j} \La \calZ_{\U{\lam_j}} \mid g\Ra \Big) + \sum_{i \neq j} \iota_i  \lam_i'\La \Lam Q_{\U{\lam_i}} \mid \calZ_{\U{\lam_j}} \Ra \\
= \La \calL_{\calQ} g \mid \calZ_{\U{\lam_j}} \Ra - \La f_{\bfi}( m_\De, \vec \iota, \vec \lam)\mid \calZ_{\U{\lam_j}} \Ra - \La f_{\bfq}(m_\De, \vec \iota, \vec \lam, g)\mid \calZ_{\U{\lam_j}} \Ra - \La \phi(u, \al) \mid \calZ_{\U{\lam_j}} \Ra. 
\end{multline} 
The above is diagonally dominate for all sufficiently small  $\eta>0$, hence invertible. We note the brutal estimates, 
\EQ{\label{eq:fi}
\Big|  \La \calL_{\calQ} g \mid \calZ_{\U{\lam_j}} \Ra  \Big| &\lesssim \frac{1}{\lam_j}  \|g \|_{\E} \\
\Big|  \La f_{\bfi}( m_\De, \vec \iota, \vec \lam)\mid \calZ_{\U{\lam_j}} \Ra  \Big| &\lesssim  \frac{1}{\lam_j} \sum_{j=1}^{N-1}\Big( \frac{\lam_j}{\lam_{j+1}} \Big)^k   \\
\Big| \La f_{\bfq}(m_\De, \vec \iota, \vec \lam, g)\mid \calZ_{\U{\lam_j}} \Ra  \Big| &\lesssim \frac{1}{\lam_j}  \| g \|_{\E}^2  \\
\Big|\La \phi(u, \al) \mid \calZ_{\U{\lam_j}}  \Ra  \Big|  &= \frac{1}{\lam_j} o_n(1) 
}
We remark that to prove the second inequality in~\eqref{eq:fi} we may use~\eqref{eq:DE-bound} and the definition of $f_{\bfi}$.  
The estimates of the remaining estimates are straightforward and we omit the proofs. 
It follows that, 
\EQ{
\abs{\lam_j'} \lesssim \frac{1}{\lam_j} \Big( \bfd(t) + \zeta_{3, n} \Big) 
}
for some sequence $\zeta_{3, n} \to 0$ as $n \to \infty$. Then~\eqref{eq:lambda'-bound} follows by enlarging $\eps_n$. This completes the proof. 
\end{proof} 

\section{Conclusion of the proof} \label{sec:conclusion} 
  
For the remainder of the paper, when we write $[a_n, b_n] \in \calC_K(\eps_n, \eta)$ we we always assume that $\bfd(a_n) = \eps_n$, $\bfd(b_n) = \eta$ and $\bfd(t) \in [\eps_n, \eta]$ for all $t \in [a_n, b_n]$. This assumption is valid by Remark~\ref{rem:collision}. 
  
  
  \begin{lem} \label{lem:collision-duration}  
  If $\eta_0>0$ is small enough, then for any $\eta \in (0, \eta_0]$ there exist $\eps \in (0, \eta)$ and $C_u>0$ with the following property. If $[c, d] \subset [a_n, b_n]$, $\bfd(c)  \le \eps$ and $\bfd(d) \ge \eta$, then, 
  \EQ{
  (d- c)^{\frac{1}{2}} \ge C_u^{-1}  \lam_K(c)
  }
  \end{lem}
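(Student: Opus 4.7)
The approach is proof by contradiction. Suppose $d - c < \delta\lambda_K(c)^2$ for some small $\delta = \delta(\eta) > 0$ to be chosen; the goal is to derive $\bfd(d) < \eta$. Since $[c, d] \subset [a_n, b_n]$, Remark~\ref{rem:collision} gives $\bfd(t) \leq \eta$ throughout $[c, d]$, so the modulation estimate \eqref{eq:lambda'-bound} yields $|\lambda_j'(t)| \leq C_0 \eta / \lambda_j(t)$, equivalently $|(\lambda_j(t)^2)'| \leq 2 C_0 \eta$. Integrating,
\begin{equation*}
|\lambda_j(t)^2 - \lambda_j(c)^2| \leq 2 C_0 \eta (t - c) \leq 2 C_0 \eta \delta \, \lambda_K(c)^2, \qquad t \in [c, d],\; 1 \leq j \leq N.
\end{equation*}

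I would use this bound together with $\bfd(c) \leq \eps$ (which controls the ratios and hence all scales at time $c$ relative to $\lambda_K(c)$) to pin down each $\lambda_j(t)$ on $[c, d]$. Once $\eta\delta$ is small, $\lambda_K(t) \in [\lambda_K(c)/\sqrt{2},\, \lambda_K(c)\sqrt{3/2}]$. For $j > K$, since $\lambda_j(c) \geq \lambda_{K+1}(c) \geq C \eps^{-2/k}\lambda_K(c)$, the relative change in $\lambda_j$ is negligible, so $\lambda_j(t) \sim \lambda_j(c)$. For $j < K$, using $\lambda_j(c) \leq C \eps^{2(K-j)/k} \lambda_K(c)$ and choosing $\eps$ small compared to $\sqrt{\eta \delta}$, I obtain $\lambda_j(t)^2 \leq 3 C_0 \eta \delta \, \lambda_K(c)^2$.

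Next I would use \eqref{eq:d-g-lam} to bound
\begin{equation*}
\bfd(d) \leq C_0 \Bigl( \|g(d)\|_\cE + \sum_{j=1}^{N-1} \bigl(\lambda_j(d)/\lambda_{j+1}(d)\bigr)^{k/2} \Bigr).
\end{equation*}
The ratios with $j \geq K$ are $o_n(1)$ uniformly by \eqref{eq:nu-estimates}, and the $j = K-1$ ratio is bounded by $(6 C_0 \eta \delta)^{k/4}$ from the scale bounds of the previous paragraph, which is smaller than $\eta/(4 C_0 N)$ for $\delta \leq \delta_1(\eta)$ small enough. The two remaining contributions to be ruled out are (a) $\|g(d)\|_\cE \gtrsim \eta$, and (b) some inner ratio $\lambda_j(d)/\lambda_{j+1}(d)$ with $j < K-1$ reaching order $\eta^{2/k}$. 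For (a), I would use the $g$-equation \eqref{eq:g-eq} together with the coercivity of Lemma~\ref{lem:D2E-coerce} and the smallness of the interaction terms $f_\bfi, f_\bfq$ and the cut-off correction $\phi(u, \alpha)$ to derive a differential inequality for $\|g(t)\|_\cE^2$ showing that, on a time interval of length $<\delta \lambda_K(c)^2$, the dissipative character of $-\LL_\calQ$ prevents $\|g(t)\|_\cE$ from growing from $\lesssim \eps$ to $\gtrsim \eta$. For (b), the minimality of $K$ in Definition~\ref{def:K-choice} is essential: an inner-ratio collision at $j+1 < K$ with the outer $N-(j+1)$ bubbles remaining coherent corresponds to a collision interval in $\calC_{j+1}(\eps_n, \eta)$, contradicting the minimality of $K$.

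The main obstacle is case (b) above: the direct modulation argument only yields $(d-c) \gtrsim \lambda_{j+1}(c)^2$, which is strictly weaker than $\lambda_K(c)^2$ when $j + 1 < K$. Closing this gap forces one to invoke the minimality of $K$ rather than the modulation estimate alone, ruling out "sub-collisions" strictly interior to bubble $K$.
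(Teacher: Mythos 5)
Your final paragraph correctly identifies the heart of the matter, but the proposal as structured cannot be closed. The contradiction target you set up --- deriving $\bfd(d) < \eta$ from $d-c < \delta\lambda_K(c)^2$ --- is unachievable: an inner collision among bubbles $1,\dots,K-1$ can drive $\bfd$ up to $\eta$ on a time scale comparable to $\lambda_{K-1}(c)^2 \ll \lambda_K(c)^2$, so nothing prevents $\bfd(d)\ge\eta$ on a short interval. This is exactly your case (b), and it is not a residual case to be patched: it is the entire content of the lemma, and the only viable route is through the minimality of $K$. (Your case (a) is also off: by \eqref{eq:g-refined}, $\|g(d)\|_{\cE}\gtrsim\eta$ already forces some ratio $\lambda_j(d)/\lambda_{j+1}(d)\gtrsim\eta^{2/k}$, so (a) reduces to (b); and the proposed dissipative differential inequality for $\|g(t)\|_{\cE}^2$ from \eqref{eq:g-eq} plus Lemma~\ref{lem:D2E-coerce} is not substantiated --- the parabolic structure gives decay of $\|g\|_{L^2}$ and time-integrability of $\|g\|_{\cE}^2$, not a pointwise non-growth estimate for $\|g\|_{\cE}$, and the quadratic form has time-dependent potentials moving at rates $\lambda_j'/\lambda_j$ that are large for the inner scales.)

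The genuine gap is in the execution of the minimality argument. Asserting that a sub-collision ``corresponds to a collision interval in $\calC_{j+1}(\eps_n,\eta)$'' does not meet Definition~\ref{def:collision}: you must exhibit a radius function $\rho(t)$ and show $\bfd_{K'}(t;\rho(t))\le\eps_n'\to 0$ \emph{uniformly for all} $t\in[c_n,d_n]$, i.e.\ asymptotically exact closeness to an exterior multi-bubble, not merely that the outer scale ratios stay small. (Also, the natural class is $\calC_{K-1}$, not $\calC_{j+1}$: for $j+1<K-1$ the coherence of bubbles $j+2,\dots,K-1$ in the exterior region is not guaranteed, whereas bubbles $K,\dots,N$ are exactly the ones whose scales cannot move in time $\ll\lambda_K(c)^2$ by \eqref{eq:lambda'-bound}.) This is where the paper does real work: it chooses a separating radius $r_n$ with $\lambda_{K-1}(c_n)+(d_n-c_n)^{1/2}\ll r_n\ll\lambda_K(c_n)$ carrying little energy at time $c_n$, uses the localized parabolic energy inequality \eqref{eq:loc-en-ineq-1} to propagate that smallness forward in time over the short interval (this is precisely where $d_n-c_n\ll r_n^2$ enters), proves the exterior energy bound $E(u(t)-u^*(t);r_n/4,\infty)\le (N-K+1)E(Q)+o_n(1)$ again via \eqref{eq:loc-en-ineq-1}, truncates $u(t)-u^*(t)$ at $r_n$, applies the static modulation Lemma~\ref{lem:mod-static}, and then uses the refined bound \eqref{eq:g-bound-A} together with the energy bound to upgrade the $O(\eta)$ closeness to $o_n(1)$ closeness, yielding $[c_n,d_n]\in\calC_{K-1}(\ti\eps_n,\ti\eta)$ and the contradiction. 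None of these steps appears in your sketch, so as written the proof of the decisive case is missing.
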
 

\begin{proof} If not, there exists $\eta>0$, sequences $\eps_n \to 0$, $[c_n, d_n] \subset [a_n, b_n]$, and $C_n \to \infty$ so that $\bfd(c_n)  \le \eps_n$, $\bfd(d_n) \ge \eta$ and 
\EQ{ \label{eq:short-time} 
(d_n - c_n)^{\frac{1}{2}} \le C_n^{-1} \lam_K(c_n)
} 
We show that in this case $[c_n, d_n] \in \calC_{K-1}(\eps_n, \eta)$, which contradicts the minimality of $K$.

First, using~\eqref{eq:lambda'-bound} we see for all $j$, 
\EQ{ \label{eq:lambda'-dn} 
\abs{\lambda_j(t)^2 - \lambda_j(c_n)^2} \le C_0 (t- c_n)
}
for all $t \in [c_n, d_n]$. Hence, using the contradiction assumption~\eqref{eq:short-time}  we can ensure that for large enough $n$,  
\EQ{
\frac{3}{4} \le  \frac{ \lam_j(t)}{\lam_j(c_n)}   \le \frac{5}{4} 
}
for all $j = K, \dots, N$ and all $t \in [c_n, d_n]$.  Since $\bfd(c_n) \to 0$, it follows that, 
\EQ{ \label{eq:lamK0} 
\lim_{n \to \infty} \sup_{t \in [c_n, d_n]} \sum_{j=K}^{N} \Big( \frac{ \lam_j(t)}{\lam_{j+1}(t)} \Big)^k  = 0. 
} 
Next, since $ \bfd(c_n) \to 0$ we can find a sequence $r_n$ such that   
\EQ{ \label{eq:rn-choice} 
\lam_{K-1}(c_n) + (d_n - c_n)^{\frac{1}{2}} \ll r_n \ll \lam_K(c_n) \mand \lim_{n \to \infty} E( u(c_n) - u^*(c_n); \frac{1}{8} r_n, 8 r_n)  = 0.
}
Since $r_n \ll \al(t)$ we see that  $u(t, r) - u^*(t, r) = \chi_{\al(t)} u(t, r) + (1- \chi_{\al(t)}) m_{\De} \pi = u(t, r)$ for all  $r \in (1/8 r_n, 8 r_n)$. Letting $\phi(r)$ be a smooth bump equal to $1$ for  $r \in (1/4 , 4)$ and supported for $r \in(1/8, 8)$  with $|\phi'(r)| \le 16$, we apply~\eqref{eq:loc-en-ineq-1} with such a $\phi$  and deduce that for any $t \in [c_n, d_n]$, 
\EQ{
E( u(t); \frac{1}{4} r_n, 4 r_n) \le E( u(c_n); 1/8 r_n, 8 r_n) + C_0 \frac{ d_n - c_n}{r_n^2} 
}
and hence, 
\EQ{ \label{eq:rn-no-en} 
\lim_{n \to \infty}  \sup_{t \in [c_n, d_n]} E( u(t)- u^*(t); \frac{1}{4} r_n, 4 r_n) = 0.
}
 Next we claim that 
\EQ{ \label{eq:N-K+1-bubbles} 
\sup_{t \in [c_n, d_n]} E( u(t) - u^*(t); \frac{1}{4} r_n, \infty) \le (N-(K-1)) E(Q) + o_n(1) 
}
In the case $T_+< \infty$ we recall that $\al(t) = \al_n (T_+-t)^{\frac{1}{2}}$ and we write, 
\EQ{
 E( u(t) - u^*(t); \frac{1}{4} r_n, \infty)  =  E( u(t) - u^*(t); \frac{1}{4} r_n, \frac{1}{4}\al(t)) +   E( u(t) - u^*(t); \frac{1}{4} \al(t), \infty) 
}
Since $\al(t) \ge \rho(t)$ we have, 
\EQ{
\lim_{t \to \infty} E( u(t) - u^*(t); \frac{1}{4} \al(t), \infty)  = 0
}
Recalling that $u(t, r) - u^*(t, r) = u(t, r)$ for all $r \le \al(t)$ we again apply~\eqref{eq:loc-en-ineq-1} with the cut-off function $\phi(t, r) = (1- \chi_{4 r_n}(r)) \chi_{\frac{1}{4}\al(t)}(r)$. Since $\frac{\ud}{\ud t} \phi(t, r) \le 0$ we use ~\eqref{eq:loc-en-ineq-1} to deduce that for all $t \in [c_n, d_n]$, 
\EQ{
 E( u(t) - u^*(t); \frac{1}{4} r_n, \frac{1}{4}\al(t)) \le  E( u(c_n) - u^*(c_n); \frac{1}{8} r_n, \frac{1}{2}\al(t)) + C_0\frac{ d_n - c_n}{r_n^2} 
}
and the right hand side tends to zero as $n \to \infty$, proving~\eqref{eq:N-K+1-bubbles} in the case $T_+< \infty$.  If $T_+ = \infty$, we use the same argument, but without the need to truncate at $\al(t)$ since we have $u^*(t) := 0$. 

Next, using~\eqref{eq:lambda'-dn} with $j = K-1$ gives,  
\EQ{
\sup_{t \in [c_n, d_n]} \abs{ \lam_{K-1}(t)^2 - \lam_{K-1}(c_n)^2} \lesssim d_n - c_n, 
}
and hence 
\EQ{
\sup_{t \in [c_n, d_n]} \frac{\lam_{K-1}(t)}{ r_n} \lesssim \frac{\lam_{K-1}(c_n)}{ r_n} + \frac{(d_n - c_n)^{\frac{1}{2}}}{r_n} \to 0 \mas n \to \infty
}
given our choice of $r_n$ in~\eqref{eq:rn-choice}. Using all of the above, we can find $m_n \in\Z$ so that defining, 
\EQ{
v(t):= (1- \chi_{r_n}) (u(t) - u^*(t)) + \chi_{r_n} m_n \pi
}
we have $v(t) \in \E_{m_n, m_\De}$ for $t \in [c_n, d_n]$ and such that 
\EQ{
\| v(t) - \calQ( m_\De, \iota_{K}, \dots, \iota_{N}, \lam_K(t), \dots \lam_{N}(t)) \|_{\E} + \sum_{j = K}^{N-1} \Big(\frac{ \lam_{j}(t)}{ \lam_{j+1}(t)} \Big)^{\frac{k}{2}} \lesssim \eta
}
It follows that $\bfd_{m_\De, N-K+1}(v(t)) \lesssim \eta$ and we can apply Lemma~\ref{lem:mod-static} to find modulation parameters $\vec {\ti\iota} \in \{-1, 1\}^{N-K+1}$,  $\ti \lam_{K}(t), \dots, \ti \lam_{N}(t)$ and $h(t)$ defined by 
\EQ{
h(t) =  v(t) - \calQ( m_\De, \iota_{K}, \dots, \iota_{N}, \ti \lam_K(t), \dots, \ti  \lam_{N}(t))
}
so that 
\EQ{
0= \La \calZ_{\U{\ti \lam_j(t)}} \mid h(t) \Ra , \mand \| h(t) \|_{\E} + \sum_{j=K}^{N-1}  \Big(\frac{\ti  \lam_{j}(t)}{\ti  \lam_{j+1}(t)} \Big)^{\frac{k}{2}} \lesssim \eta
}
In fact, using~\eqref{eq:lamK0} and the fact that the $\ti \lam_j(t)$ satisfy $| \ti \lam_j(t)/ \lam_j(t) - 1| \lesssim \eta$, we have,  
\EQ{
\lim_{n \to \infty} \sup_{t \in [c_n, d_n]} \sum_{j=K}^{N-1}  \Big(\frac{\ti  \lam_{j}(t)}{\ti  \lam_{j+1}(t)} \Big)^{\frac{k}{2}} = 0. 
}
And, thus, using~\eqref{eq:g-bound-A} along with~\eqref{eq:N-K+1-bubbles} we have the bound,  
\EQ{
\| h(t) \|_{\E}  \lesssim \sum_{j=K}^{N-1}  \Big(\frac{\ti  \lam_{j}(t)}{\ti  \lam_{j+1}(t)} \Big)^{\frac{k}{2}} + o_n(1) 
}
and thus $\lim_{n \to \infty} \sup_{t \in [c_n, d_n]} \| h(t) \|_{\E}  = 0$ as well. Letting $\rho_{K-1}(t) := r_n$ for $t \in [c_n, d_n]$ we have proved that 
\EQ{
\lim_{n \to \infty} \sup_{t \in [c_n, d_n]} \bfd(t; \rho_{K-1}(t))  = 0
}
which means we can find $\ti \eta>0$, $\ti \eps_n \to 0$ such that $[c_n, d_n] \in \calC_{K-1}( \ti \eps_n, \ti \eta)$ contradicting the minimality of $K$. 
\end{proof} 

\begin{lem}\label{lem:cndn} Let $\eta_0>0$ be as in Lemma~\ref{lem:collision-duration},  $\eta \in (0, \eta_0]$, $\eps_n \to 0$ be some sequence, and let $[a_n, b_n] \in \calC_K(\eps_n, \eta)$. Then, there exist $\eps \in (0, \eta)$,  $n_0 \in \N$,  and $c_n,  d_n \in (a_n, b_n)$ such that for all $n \ge n_0$, we have 
\EQ{ \label{eq:d>eps} 
\bfd(t) \ge \eps, \quad \forall \, \, t \in [c_n, d_n], 
}
\EQ{\label{eq:dn-cn} 
d_n - c_n = \frac{1}{n} \lam_K(c_n)^2, 
}
and 
\EQ{ \label{eq:lamKcn} 
\frac{1}{2} \lam_K(c_n) \le \lam_K(t) \le 2\lam_K(c_n) \quad \forall\, \, t \in [c_n, d_n]. 
} 
\end{lem}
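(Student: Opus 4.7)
\textbf{Proof plan for Lemma~\ref{lem:cndn}.} The strategy is to use Lemma~\ref{lem:collision-duration} to guarantee that once $\bfd$ has last descended to the level $\eps$, there is a long interval (of length at least a fixed fraction of $\lambda_K(c_n)^2$) before $\bfd$ can reach $\eta$. Then one simply takes $d_n - c_n$ to be the much shorter length $\frac{1}{n}\lambda_K(c_n)^2$.

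First I would fix $\eta_0$ from Lemma~\ref{lem:collision-duration} and, for the given $\eta \in (0, \eta_0]$, take the corresponding $\eps \in (0, \eta)$ and $C_u > 0$ supplied by that lemma. Using the continuity of $t \mapsto u(t) \in \cE$ on the open interval $(0, T_*)$ together with Lemma~\ref{lem:mod-static} (which shows that, whenever $\bfd(t)$ is small, the infimum defining $\bfd$ is attained by parameters that depend continuously on $u(t)$), the function $t \mapsto \bfd(t)$ is continuous on $[a_n, b_n]$. Because $\bfd(a_n) = \eps_n \to 0 < \eps$ and $\bfd(b_n) = \eta > \eps$, the set $\{t \in [a_n, b_n] : \bfd(t) \le \eps\}$ is nonempty for large $n$, and I would define
\begin{equation}
c_n := \sup\{t \in [a_n, b_n] : \bfd(t) \le \eps\}.
\end{equation}
By continuity, $\bfd(c_n) = \eps$ and $\bfd(t) \ge \eps$ for all $t \in [c_n, b_n]$, which takes care of~\eqref{eq:d>eps} once $d_n \in (c_n, b_n]$ is chosen.

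Next I would apply Lemma~\ref{lem:collision-duration} to the subinterval $[c_n, b_n] \subset [a_n, b_n]$ (with $\bfd(c_n) = \eps$ and $\bfd(b_n) = \eta$) to obtain
\begin{equation}
b_n - c_n \ge C_u^{-2}\, \lambda_K(c_n)^2.
\end{equation}
For any $n \ge n_0 := \lceil C_u^2 \rceil$, the choice $d_n := c_n + \frac{1}{n}\lambda_K(c_n)^2$ therefore satisfies $d_n \in (c_n, b_n]$, proving~\eqref{eq:dn-cn} by definition. Finally, for~\eqref{eq:lamKcn} I would invoke the modulation bound~\eqref{eq:lambda'-bound} with $j = K$, which together with $\bfd(t) \le \eta$ gives $\bigl|\frac{d}{dt}\lambda_K(t)^2\bigr| = 2\,|\lambda_K(t)\lambda_K'(t)| \le 2C_0\,\bfd(t) \le 2C_0\eta$ on $[c_n, d_n]$. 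Integrating,
\begin{equation}
\bigl|\lambda_K(t)^2 - \lambda_K(c_n)^2\bigr| \le 2C_0\eta\,(d_n - c_n) = \frac{2C_0\eta}{n}\,\lambda_K(c_n)^2,
\end{equation}
so for $n$ sufficiently large (depending only on $C_0, \eta$) the right-hand side is bounded by $\frac{3}{4}\lambda_K(c_n)^2$, yielding $\lambda_K(t)^2 \in [\tfrac{1}{4}\lambda_K(c_n)^2, 2\lambda_K(c_n)^2]$ and hence~\eqref{eq:lamKcn}. Enlarging $n_0$ if necessary to absorb this last smallness requirement completes the proof.

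The argument is essentially a calculus exercise built on top of Lemma~\ref{lem:collision-duration} and~\eqref{eq:lambda'-bound}; the only point requiring a little care is the continuity of $\bfd(t)$, which is needed to ensure that $c_n$ can be selected so that $\bfd(c_n)$ equals (rather than merely approaches) the threshold $\eps$. This is easily verified from Lemma~\ref{lem:mod-static} applied in a neighborhood of each $c_n$, since $\bfd$ is always $\le \eta < \eta_0$ on $[a_n, b_n]$ and the modulation parameters depend continuously on $u(t)$.
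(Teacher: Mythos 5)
Your proposal is correct and follows essentially the same route as the paper: define $c_n$ as the last time $\bfd$ is at level $\eps$, apply Lemma~\ref{lem:collision-duration} on $[c_n,b_n]$ to get $b_n-c_n\gtrsim \lambda_K(c_n)^2$ so that $d_n:=c_n+\frac1n\lambda_K(c_n)^2$ fits inside, and integrate the modulation bound~\eqref{eq:lambda'-bound} to control $\lambda_K$ on $[c_n,d_n]$. Your extra remarks on the continuity of $\bfd$ and the correct exponent $C_u^{-2}\lambda_K(c_n)^2$ in the duration bound are consistent with (indeed slightly more careful than) the paper's argument.
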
 

\begin{proof} 
Choose $\eps>0$ so that Lemma~\ref{lem:collision-duration} holds and define $c_n:= \sup\{t \in [a_n, b_n] \mid \bfd(t) \le \eps\}$. Then $\bfd(c_n) = \eps$ and by Lemma~\ref{lem:collision-duration} we have 
\EQ{
b_n - c_n \ge C_u^{-1} \lam_K(c_n). 
}
We then let $d_n:= c_n + \frac{1}{n} \lam_K(c_n)^2$ and for $n$ sufficiently large we have $d_n < b_n$. Then by~\eqref{eq:lambda'-bound} we have, 
\EQ{
\Big| \frac{ \lam_K(t)^2}{\lam_K(c_n)^2} - 1 \Big| \lesssim \frac{ d_n- c_n}{\lam_K(c_n)}  = \frac{1}{n}. 
}
from which~\eqref{eq:lamKcn} follows. 
\end{proof} 

\begin{lem} \label{lem:delta-to-d} There exists $\eta_1>0$ with the following property. Let $\eta \in (0, \eta_1]$, $\eps_n \to 0$ and let $[a_n, b_n] \in \calC_K(\eps_n, \eta)$. If $\{s_n \}_{n}$ and $\{r_n\}_n$ are any sequences such that $s_n \in [a_n, b_n]$ for all $n$, $1 \ll r_n \ll \lam_{K+1}(s_n)/\lam_{K}(s_n)$, and $ \lim_{n \to \infty} \bs \delta_{r_n \lam_K(s_n)}(u(s_n)) = 0$, then $\lim_{n \to \infty} \bfd(s_n) = 0$. 
\end{lem}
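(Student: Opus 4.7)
The plan is to combine the interior description of $u(s_n)$ provided by the hypothesis $\bs\delta_{r_n\lam_K(s_n)}(u(s_n))\to 0$ with the exterior description coming from Lemma~\ref{lem:modulation} and \eqref{eq:N-K-converge}, splice them into a single combined multi-bubble configuration, and invoke the uniqueness Lemma~\ref{lem:bub-config} to force the interior multi-bubble to be a $K$-bubble whose scales match those from the modulation. After passing to a subsequence, the hypothesis produces an integer $M$, an integer $m_n$, signs $\vec\iota^{\mathrm{int}}$, and scales $\lambda'_{n,1}\ll\cdots\ll\lambda'_{n,M}\ll r_n\lam_K(s_n)$ with
\[
\|u(s_n)-\calQ_n\|_{\cE(0,r_n\lam_K(s_n))}+\sum_{j=1}^{M}\Big(\tfrac{\lambda'_{n,j}}{\lambda'_{n,j+1}}\Big)^k\to 0,\qquad \calQ_n:=\calQ(m_n,\vec\iota^{\mathrm{int}},\vec\lambda'_n).
\]
Setting $R_n:=\max(r_n\lam_K(s_n),\nu(s_n))$ one verifies $\nu(s_n)\le R_n\ll\lam_{K+1}(s_n)$, so \eqref{eq:N-K-converge} controls $u(s_n)-u^*(s_n)$ on $\cE(R_n,\infty)$ by the exterior $(N-K)$-bubble $\calQ^{\mathrm{ext}}(s_n)$ built from the scales $\lam_{K+1}(s_n),\dots,\lam_N(s_n)$.

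I then form the combined $(M+N-K)$-bubble
\[
\calQ_{\mathrm{comb}}:=m_\Delta\pi+\sum_{j=1}^M\iota^{\mathrm{int}}_j(Q_{\lambda'_{n,j}}-\pi)+\sum_{j=K+1}^N\iota_j(Q_{\lam_j(s_n)}-\pi).
\]
Matching the asymptotic values of $\calQ_n$ and of the modulation-bubble in the window $\lam_K(s_n)\ll r\ll\lam_{K+1}(s_n)$, together with the Sobolev bound $\|g(s_n)\|_{L^\infty}\le C\eta$, forces $m_n=m_K:=m_\Delta-\sum_{j>K}\iota_j$ for $\eta$ small enough. Using Lemma~\ref{lem:cross-term} to control the tails of interior (resp.\ exterior) bubbles at exterior (resp.\ interior) scales, I estimate $\|u(s_n)-u^*(s_n)-\calQ_{\mathrm{comb}}\|_\cE$ across the three regions $(0,r_n\lam_K(s_n))$, the (possibly empty) gap $(r_n\lam_K(s_n),\nu(s_n))$, and $(R_n,\infty)$: the first and last contribute $o(1)$ by the interior and exterior matchings, while on the gap one only has the bare bound $\|g(s_n)\|_\cE\le C_0\bfd(s_n)\le C_0\eta$. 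Since all consecutive-scale ratios of $\calQ_{\mathrm{comb}}$ vanish as $n\to\infty$, this gives $\bfd_{m_\Delta,M+N-K}(u(s_n)-u^*(s_n))\le C\eta+o(1)$. On the other hand, the modulation decomposition together with $\|u^*(s_n)-u^*\|_\cE\to 0$ gives $\bfd_{m_\Delta,N}(u(s_n)-u^*(s_n))\le C\eta+o(1)$ as well. Choosing $\eta_1$ small enough that both bounds fall below the threshold of Lemma~\ref{lem:bub-config}, that lemma forces $M+N-K=N$, hence $M=K$, together with $\lambda'_{n,j}/\lam_j(s_n)\to 1$ and $\iota^{\mathrm{int}}_j=\iota_j$ for $j=1,\dots,K$.

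With $M=K$ in hand I return to energy accounting: the $\bs\delta$-hypothesis gives $E(u(s_n);0,r_n\lam_K(s_n))\to KE(Q)$, estimate \eqref{eq:N-K-bubbles} gives $E(u(s_n);R_n,\infty)\to(N-K)E(Q)+E(u^*)$, while the total tends to $NE(Q)+E(u^*)$, forcing the gap energy to vanish. Combined with the fact that $u(s_n)$ is already close to $m_K\pi$ at the inner edge $r_n\lam_K(s_n)$ (since $\calQ_n\to m_K\pi$ there and the interior matching holds), Lemma~\ref{lem:pi}, supplemented when the aspect ratio of the gap is bounded by the direct pointwise estimate $|u(r_2)-u(r_1)|\lesssim(E(u;r_1,r_2))^{1/2}\log(r_2/r_1)^{1/2}$, yields $\|u(s_n)-m_K\pi\|_{\cE(r_n\lam_K(s_n),\nu(s_n))}\to 0$. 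Since $\calQ(m_\Delta,\vec\iota,\vec\lam(s_n))$ is within $\cE$-small of $m_K\pi$ on the gap, this gives $\|g(s_n)\|_{\cE(\mathrm{gap})}\to 0$. Together with the vanishing of $g(s_n)$ on the interior (which follows from the $\bs\delta$-bound and the scale matching $\lambda'_{n,j}\sim\lam_j(s_n)$) and on the exterior (from \eqref{eq:N-K-converge}), I conclude $\|g(s_n)\|_\cE\to 0$; combined with all scale ratios vanishing, \eqref{eq:d-g-lam} yields $\bfd(s_n)\to 0$.

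The main obstacle is the gap annulus $(r_n\lam_K(s_n),\nu(s_n))$, which is nonempty precisely when $\nu(s_n)>r_n\lam_K(s_n)$: on it $g(s_n)$ is a priori only of size $O(\eta)$ in $\cE$, so a direct norm estimate cannot close the argument. The splicing-and-uniqueness device using Lemma~\ref{lem:bub-config} is what converts this $O(\eta)$ bound into the structural identity $M=K$, after which energy accounting improves the gap contribution from $O(\eta)$ to $o(1)$. A secondary technical point is that the aspect ratio $\nu(s_n)/(r_n\lam_K(s_n))$ can be arbitrarily close to $1$, so the application of Lemma~\ref{lem:pi} to the gap must be combined with a short-annulus pointwise estimate to conclude.
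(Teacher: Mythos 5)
Your proof assembles essentially the same ingredients as the paper's (interior configuration from the $\bs\delta$-hypothesis, exterior control from \eqref{eq:N-K-converge}--\eqref{eq:N-K-bubbles}, energy accounting to kill the neck, Lemma~\ref{lem:pi} on the neck), but the final deduction has a genuine flaw. You conclude by showing $\|g(s_n)\|_{\cE} \to 0$ and invoking \eqref{eq:d-g-lam}, and the interior part of this rests on the claim that Lemma~\ref{lem:bub-config} gives $\lambda'_{n,j}/\lam_j(s_n) \to 1$. It does not: with the threshold $\eta_1$ fixed, \eqref{eq:lam-mu-close} only yields $|\lambda'_{n,j}/\lam_j(s_n)-1| \le \te(\eta_1)$ for a fixed small $\te$, because the hypotheses \eqref{eq:M-bub}--\eqref{eq:L-bub} are only satisfied at size $O(\eta_1)$, not $o(1)$ --- indeed $\bfd(s_n)$ being of size $\eta$ is exactly the scenario you are trying to exclude, so you cannot feed a vanishing parameter into that lemma. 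Consequently $\|g(s_n)\|_{\cE(0, r_n\lam_K(s_n))}$ is only $O(\te(\eta_1))$, and your chain of estimates gives $\limsup_n \bfd(s_n) \lesssim \te(\eta_1)$ rather than $0$; the asserted interior vanishing of $g(s_n)$ is essentially equivalent to the conclusion itself, so as written the last step is circular.

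The repair is already contained in what you proved: do not route the conclusion through the modulation error $g$ at all. Since $\bfd$ is an infimum over $N$-bubble configurations, it suffices to test it with your spliced configuration $\calQ_{\mathrm{comb}}$ (which has exactly $N$ bubbles once $M=K$): you have $o(1)$ closeness on the interior and, by \eqref{eq:N-K-converge} and \eqref{eq:u^*(t)-u^*}, on the exterior, the neck closeness is upgraded from $O(\eta)$ to $o(1)$ by your energy accounting together with \eqref{eq:H-E-comp}, and all consecutive scale ratios of $\calQ_{\mathrm{comb}}$ vanish; this gives $\bfd(s_n)\to 0$ directly, which is exactly how the paper concludes (\eqref{eq:d-conv-0}). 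Two further remarks on differences from the paper: the paper identifies the number of interior bubbles ($M_n=K$) by a simple energy count --- $\bfd(t)\le\eta$ forces $E(u(s_n);0,r_n\lam_K(s_n))$ to lie within $\tfrac12 E(Q)$ of $KE(Q)$ --- which is lighter than your splicing-plus-Lemma~\ref{lem:bub-config} device (your device is fine, but note it already requires the $m_n$-matching and the constant juggling with the threshold of that lemma); and the paper chooses $R_n$ with $r_n\lam_K(s_n)\ll R_n\ll \lam_{K+1}(s_n)$ (replacing it by $\nu(s_n)$ only if that is larger), so the neck annulus always has aspect ratio tending to infinity and Lemma~\ref{lem:pi} applies with uniform constants, making your ``short-annulus'' patch unnecessary.
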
 

\begin{proof}
Let $R_n$ be a sequence such that $r_n\lam_K(s_n) \ll R_n \ll \lambda_{K+1}(s_n)$.
Without loss of generality, we can assume $\nu(s_n)  \le  R_n \le \al(s_n)$, since it suffices to replace $R_n$ by $\nu(s_n)$
for all $n$ such that $R_n < \nu(s_n)$. If $K = N$ we can similarly ensure that $R_n \le \al(s_n)$. 
Let $M_n, m_n, \vec\sigma_n \in \{-1, 1\}^{M_n}, \vec \mu_n \in (0, \infty)^{M_n}$ be parameters such that
\begin{equation}
\label{eq:conv-delta-iii}
\| u(t_n) - \calQ( m_n, \vec \sigma_n, \vec \mu_n) \|_{H( r \le r_n\lambda_K(s_n))}^2 + \sum_{j = 1}^{M_n} \Big(\frac{ \mu_{n, j}}{ \mu_{n, j+1}}\Big)^k  + \frac{\mu_{n, M_n}}{r_n \lam_{K}(s_n)} \to 0,
\end{equation}
which exist by the definition of the localized distance function \eqref{eq:delta-def}.  Since $\bfd(t) \le \eta$ on $[a_n, b_n]$ we can choose $\eta_1>0$ sufficiently small so that, 
\begin{equation}
\Big(K - \frac 12\Big)E( Q) \leq \liminf_{n\to\infty}E( u(s_n); 0, r_n\lam_K(s_n)) \leq \limsup_{n\to\infty}E(u(s_n); 0, r_n\lam_K(s_n)) \leq \Big(K+\frac{1}{2}\Big) E( Q), 
\end{equation} 
after noting that the radiation $u^*$ is negligible on the region $r \le r_n\lam_K(s_n)$. 
Hence,  $M_n = K$ for $n$ large enough.
We set $\mu_{n, j} := \lam_j(s_n)$ and $\s_{n, j} := \iota_j$ for $j > K$.
We claim that
\begin{equation}
\label{eq:d-conv-0}
\lim_{n\to \infty}\bigg(\|  u(s_n) -  u^* - \calQ(m_\Delta, \vec\s_n, \vec\mu_n) \|_{\cE}^2 + \sum_{j=1}^{N}\Big(\frac{ \mu_{n, j}}{\mu_{n, j+1}}\Big)^{k}\bigg) = 0.
\end{equation}
By the definition of $\bfd$, the proof will be finished.
First, recall that $\mu_{n, K} \ll r_n\mu(t_n)$, so $\mu_{n, K} / \mu_{n, K+1} \to 0$.
In the region $r \leq r_n\lam_K(s_n)$, convergence follows from \eqref{eq:conv-delta-iii},
since the energy of the exterior bubbles asymptotically vanishes there.
In the region $r \ge R_n$, the energy of the interior bubbles vanishes, hence it suffices to apply~\eqref{eq:N-K-converge}.
In particular, by the above and~\eqref{eq:N-K-bubbles}, 
\begin{equation}
\lim_{n\to\infty}E( u(s_n); 0, r_n\lam_K(s_n)) = KE(Q), \qquad \lim_{n\to\infty} E( u(s_n); R_n, \infty) = (N-K)E(Q) + E( u^*),
\end{equation}
which implies
\begin{equation}
\lim_{n\to\infty} E( u(s_n); r_n\lambda_K(s_n), R_n) = 0,
\end{equation}
and \eqref{eq:H-E-comp} yields convergence of the error also in the region $r_n\lambda_K(s_n) \leq r \leq R_n$.
\end{proof}

\begin{proof}[Proof of Theorem~\ref{thm:main}]
Assume the theorem is false and let $[a_n, b_n] \in \calC_K(\eps_n, \eta)$ be a sequence of disjoint collision intervals given by Lemma~\ref{lem:modulation}, and $\eta>0$ is sufficiently small so that Lemma~\ref{lem:collision-duration} and Lemma~\ref{lem:delta-to-d} hold. Let $\eps>0$, $n_0$, and $[c_n, d_n]$ be as in Lemma~\ref{lem:cndn}. 

We claim that there exists $c_0>0$ such that for every $n \ge n_0$, 
\EQ{ \label{eq:tension-lower} 
\inf_{t \in [c_n, d_n]} \lam_K(t)^2 \| \p_t u(t) \|_{L^2}^2 \ge c_0. 
}
If not, we could, after passing to a subsequence, find a sequence $s_n \in [c_n, d_n]$ such that
\EQ{
\lim_{n \to \infty} \lam_K(s_n) \| \p_t u(s_n) \|_{L^2}  = 0
}
But then an application of Lemma~\ref{lem:compact} gives a sequence $r_n \to \infty$ such that, after passing to a further subsequence,  $ \lim_{n \to \infty} \bs \delta_{r_n \lam_K(s_n)} ( u(s_n)) = 0$.  But then Lemma~\ref{lem:delta-to-d} gives that $\lim_{n \to \infty} \bfd(s_n) = 0$, which contradicts~\eqref{eq:d>eps}. Thus~\eqref{eq:tension-lower} holds. 

Therefore, using~\eqref{eq:tension-lower},~\eqref{eq:lamKcn}, and~\eqref{eq:dn-cn} we have 
\EQ{
\sum_{n \ge n_0} \int_{c_n}^{d_n} \| \p_t u(t) \|_{L^2}^2 \, \ud t  \ge \frac{c_0}{4} \sum_{n \ge n_0} \int_{c_n}^{d_n} \lam_K(c_n)^{-2} \,  \ud t \ge \frac{c_0}{4} \sum_{n \ge n_0} n^{-1} = \infty.
}
On the other hand,  by~\eqref{eq:tension-L2} and the fact that the $[c_n, d_n]$ are disjoint, we have, 
\EQ{
\sum_{n \ge n_0} \int_{c_n}^{d_n} \| \p_t u(t) \|_{L^2}^2 \, \ud t  \le \int_0^{T_*} \| \p_t u(t) \|_{L^2}^2 \, \ud t < \infty, 
}
which is a contradiction. 
\end{proof} 

\bibliographystyle{plain}
\bibliography{HMHF}

\begin{thebibliography}{10}

\bibitem{BG}
H.~Bahouri and P.~G\'erard.
\newblock High frequency approximation of solutions to critical nonlinear wave
  equations.
\newblock {\em Amer. J. Math.}, 121:131--175, 1999.

\bibitem{BvdHH}
M.~Bertsch, R.~van~der Hout, and J.~Hulshof.
\newblock Energy concentration for {2}-dimensional radially symmetric
  equivariant harmonic map heat flows.
\newblock {\em Commun. Contemp. Math.}, 13(4):675--695, 2011.

\bibitem{BrezisCoron}
H.~Brezis and J.~M. Coron.
\newblock Convergence of solutions of {H}-systems or how to blow bubbles.
\newblock {\em Arch. Ration. Mech. Anal.}, 89:21--56, 1985.

\bibitem{CDY}
K.-C. Chang, W.~Y.~Yue Ding, and R.~Ye.
\newblock Finite-time blow-up of the heat flow of harmonic maps from surfaces.
\newblock {\em J. Differential Geom.}, 36(2):507--515, 1992.

\bibitem{CG}
J.-M. Coron and J.-M. Ghidaglia.
\newblock Explosion en temps fini pour le flot des applications harmoniques.
\newblock {\em C. R. Acad. Sci. Paris S\'{e}r. I Math.}, 308(12):339--344,
  1989.

\bibitem{Cote15}
R.~C\^ote.
\newblock On the soliton resolution for equivariant wave maps to the sphere.
\newblock {\em Comm. Pure Appl. Math.}, 68(11):1946--2004, 2015.

\bibitem{DDPW}
J.~D\'{a}vila, M.~del Pino, and J.~Wei.
\newblock Singularity formation for the two-dimensional harmonic map flow into
  {$S^2$}.
\newblock {\em Invent. Math.}, 219(2):345--466, 2020.

\bibitem{DPMW}
M.~del Pino, M.~Musso, and J.~Wei.
\newblock Existence and stability of infinite time bubble towers in the energy
  critical heat equation.
\newblock {\em Anal. PDE}, 14(5):1557--1598, 2021.

\bibitem{DT}
W.~Y. Ding and G.~Tian.
\newblock Energy identity for a class of approximate harmonic maps from
  surfaces.
\newblock {\em Comm. Anal. Geom.}, 3(3--4):543--554, 1995.

\bibitem{DKM3}
T.~Duyckaerts, C.~E. Kenig, and F.~Merle.
\newblock Profiles of bounded radial solutions of the focusing, energy-critical
  wave equation.
\newblock {\em Geom. Funct. Anal.}, 22(3):639--698, 2012.

\bibitem{Gerard}
P.~G{\'e}rard.
\newblock Description du d{\'e}faut de compacit{\'e} de l'injection de
  {S}obolev.
\newblock {\em ESAIM Control Optim. Calc. Var.}, 3:213--233, 1998.

\bibitem{GGT}
M.~Guan, S.~Gustafson, and T.-P. Tsai.
\newblock Global existence and blow-up for harmonic map heat flow.
\newblock {\em J. Differential Equations}, 246(1):1--20, 2009.

\bibitem{GNT}
S.~Gustafson, K.~Nakanishi, and T.-P. Tsai.
\newblock Asymptotic stability, concentration, and oscillation in harmonic map
  heat-flow, {L}andau-{L}ifshitz, and {S}chr\"{o}dinger maps on {$\Bbb R^2$}.
\newblock {\em Comm. Math. Phys.}, 300(1):205--242, 2010.

\bibitem{Hel}
F.~H{{\'e}}lein.
\newblock {\em Harmonic maps, conservation laws and moving frames}, volume 150
  of {\em Cambridge Tracts in Mathematics}.
\newblock Cambridge University Press, Cambridge, second edition, 2002.
\newblock Translated from the 1996 French original, With a foreword by James
  Eells.

\bibitem{JJ-AJM}
J.~Jendrej.
\newblock Construction of two-bubble solutions for energy-critical wave
  equations.
\newblock {\em Amer. J. Math.}, 141(1):55--118, 2019.

\bibitem{JKL1}
J.~Jendrej, M.~Kowalczyk, and A.~Lawrie.
\newblock Dynamics of strongly interacting kink-antikink pairs for scalar
  fields on a line.
\newblock {\em Duke Math. J.}, to appear.

\bibitem{JL6}
J.~Jendrej and A.~Lawrie.
\newblock Soliton resolution for energy--critical wave wave maps in the
  equivariant case.
\newblock {\em J. Amer. Math. Soc.}, to appear.

\bibitem{JK}
H.~Jia and C.~Kenig.
\newblock Asymptotic decomposition for semilinear wave and equivariant wave map
  equations.
\newblock {\em Amer. J. Math.}, 139(6):1521--1603, 2017.

\bibitem{Lin-Wang}
F.~Lin and C.~Wang.
\newblock {\em The analysis of harmonic maps and their heat flows}.
\newblock World Scientific Publishing Co. Pte. Ltd., Hackensack, NJ, 2008.

\bibitem{Lions1}
P.-L. Lions.
\newblock The concentration-compactness principle in the {C}alculus of
  {V}ariations. {T}he locally compact case, part {I}.
\newblock {\em Ann. Inst. H. Poincar{\'e} Anal. Non Lin{\'e}aire}, 1:79--145,
  1984.

\bibitem{Lions2}
P.-L. Lions.
\newblock The concentration-compactness principle in the {C}alculus of
  {V}ariations. {T}he limit case, part {II}.
\newblock {\em Rev. Mat. Iberoam.}, 1:145--201, 1985.

\bibitem{MeVe98}
F.~Merle and L.~Vega.
\newblock Compactness at blow-up time for {$L^2$} solutions of the critical
  nonlinear {S}chr{\"o}dinger equation in {2D}.
\newblock {\em Int. Math. Res. Not. IMRN}, (8):399--425, 1998.

\bibitem{Qing}
J.~Qing.
\newblock On singularities of the heat flow for harmonic maps from surfaces
  into spheres.
\newblock {\em Comm. Anal. Geom.}, 3(297--316), 1995.

\bibitem{QT}
J.~Qing and G.~Tian.
\newblock Bubbling of the heat flows for harmonic maps from surfaces.
\newblock {\em Comm. Pure Appl. Math.}, 50(4):295--310, 1997.

\bibitem{RSc-13}
P.~Rapha\"{e}l and R.~Schweyer.
\newblock Stable blowup dynamics for the 1-corotational energy critical
  harmonic heat flow.
\newblock {\em Comm. Pure Appl. Math.}, 66(3):414--480, 2013.

\bibitem{RSc-14}
P.~Rapha\"{e}l and R.~Schweyer.
\newblock Quantized slow blow-up dynamics for the corotational energy-critical
  harmonic heat flow.
\newblock {\em Anal. PDE}, 7(8):1713--1805, 2014.

\bibitem{Struwe85}
M.~Struwe.
\newblock On the evolution of harmonic mappings of riemannian surfaces.
\newblock {\em Comment. Math. Helv.}, 60(4):558--581, 1985.

\bibitem{Topping04}
P.~Topping.
\newblock Repulsion and quantization in almost-harmonic maps, and asymptotics
  of the harmonic map flow.
\newblock {\em Ann. of Math.}, 159(2):465--534, 2004.

\bibitem{Topping-winding}
P.~Topping.
\newblock Winding behaviour of finite-time singularities of the harmonic map
  heat flow.
\newblock {\em Math. Z.}, 247(2):279--302, 2004.

\bibitem{Top-JDG}
P.~M. Topping.
\newblock Rigidity in the harmonic map heat flow.
\newblock {\em J. Diff. Geom.}, 45(3):593--610, 1997.

\bibitem{vdHout03}
R.~van~der Hout.
\newblock On the nonexistence of finite time bubble trees in symmetric harmonic
  map heat flows from the disk to the 2-sphere.
\newblock {\em J. Differential Equations}, 192(1):188--201, 2003.

\bibitem{Wang}
C.~Wang.
\newblock Bubble phenomena of certain palais-smale sequences from surfaces to
  general targets.
\newblock {\em Houston J. Math.}, 22(3):559--590, 1996.

\end{thebibliography}

\bigskip
\centerline{\scshape Jacek Jendrej}
\smallskip
{\footnotesize
 \centerline{CNRS and LAGA, Universit\'e  Sorbonne Paris Nord}
\centerline{99 av Jean-Baptiste Cl\'ement, 93430 Villetaneuse, France}
\centerline{\email{jendrej@math.univ-paris13.fr}}
} 
\medskip 
\centerline{\scshape Andrew Lawrie}
\smallskip
{\footnotesize
 \centerline{Department of Mathematics, Massachusetts Institute of Technology}
\centerline{77 Massachusetts Ave, 2-267, Cambridge, MA 02139, U.S.A.}
\centerline{\email{alawrie@mit.edu}}
} 


\end{document}